\newtheorem{lemma}{Lemma}[section]
\newtheorem{corollary}[lemma]{Corollary}
\newtheorem{theorem}[lemma]{Theorem}
\newtheorem{proposition}[lemma]{Proposition}
\theoremstyle{definition}
\newtheorem{remark}[lemma]{Remark}
\newtheorem{definition}[lemma]{Definition}
\newtheorem{example}[lemma]{Example}
\newcommand\Cset{\mathbb {C}}
\newcommand\Zset{\mathbb {Z}}
\newcommand\Qset{\mathbb {Q}}
\newcommand\ve{{\mathcal V}}
\newcommand\A{{\mathcal A}}
\newcommand\U{{\mathcal U(A)}}
\newcommand\ann{\mathrm{ann}}
\newcommand\Qrtot{Q^r_{\mathrm{tot}}}
\newcommand\Qltot{Q^l_{\mathrm{tot}}}
\newcommand\Qtot{Q^{\sigma}_{\mathrm{tot}}}
\newcommand\Qrmax{Q^r_{\mathrm{max}}}
\newcommand\Qlmax{Q^l_{\mathrm{max}}}
\newcommand\Qmax{Q^{\sigma}_{\mathrm{max}}}
\newcommand\Qrcl{Q^r_{\mathrm{cl}}}
\newcommand\Qlcl{Q^l_{\mathrm{cl}}}
\newcommand\sima{\sim^{\hskip-.22cm a}}
\newcommand\sims{\sim^{\hskip-.22cm *}}
\newcommand\manjea{\preceq^{\hskip-.25cm a}}
\newcommand\manjes{\preceq^{\hskip-.25cm *}}
\newcommand\ef{{\mathfrak F}}
\newcommand\dirlim{\mathop{\varinjlim}\limits}
\newcommand\f{{\mathcal F}}
\newcommand\te{{\mathcal T}}
\newcommand\cl{\mathrm{cl}}
\newcommand\homo{\mathrm{Hom}}
\newcommand\unb{\mathrm{{\bf u}}}
\newcommand\bnd{\mathrm{{\bf b}}}
\newcommand\smallt{\mathrm{{\bf t}}}
\newcommand\p{\mathrm{{\bf p}}}
\newcommand\T{\mathrm{{\bf T}}}
\newcommand\bigP{\mathrm{{\bf P}}}
\begin{document}
\title{Strongly semihereditary rings and rings with dimension}

\author{Lia Va\v s}
\address{Department of Mathematics, Physics and Statistics\\
University of the Sciences in Philadelphia\\
Philadelphia, PA 19104, USA}
\email{l.vas@usp.edu}

\thanks{This paper was completed mostly during the author's visit to the University of M\'alaga, partially funded by a ``Grant for foreign visiting professors'' within the III Research Framework Program of the University of M\'alaga. The author thanks the host institution for the hospitality and support.}

\subjclass[2000]{
16W99,
16S99 
16S90 
16W10, 
}


\keywords{dimension, rings of quotients, semihereditary, involution, Baer, regular}

\begin{abstract} The existence of a well-behaved dimension of a finite von Neumann algebra (see \cite{Lueck_dimension}) has lead to the study of such a dimension of finite Baer $*$-rings (see \cite{Lia_Baer}) that satisfy certain $*$-ring axioms (used in \cite{Berberian}). This dimension is closely related to the equivalence relation $\sims$ on projections defined by $p\sims q$ iff $p=xx^*$ and $q=x^*x$ for some $x.$ However, the equivalence $\sima$ on projections (or, in general, idempotents) defined
by $p\sima q$ iff $p=xy$ and $q=yx$ for some $x$ and $y,$ can also be relevant. There were attempts to unify the two approaches (see \cite{Berberian_web}).

In this work, our agenda is three-fold: (1) We study assumptions on a ring with involution that guarantee the existence of a well-behaved dimension defined for any general equivalence relation on projections $\sim.$ (2) By interpreting $\sim$ as $\sima,$ we prove the existence of a well-behaved dimension of strongly semihereditary rings with a positive definite involution. This class is wider than the class of finite Baer $*$-rings with dimension considered in the past: it includes some non Rickart $*$-rings. Moreover, none of the $*$-ring axioms from \cite{Berberian} and \cite{Lia_Baer} are assumed. (3) As the first corollary of (2), we obtain dimension of noetherian Leavitt path algebras over positive definite fields. Secondly, we obtain dimension of a Baer $*$-ring $R$ satisfying the first seven axioms from \cite{Lia_Baer} (in particular, dimension of finite $AW^*$-algebras). Assuming the eight axiom as well, $R$ has dimension for $\sims$ also and the two dimensions coincide.

While establishing (2), we obtain some additional results for a right strongly semihereditary ring $R$: we prove that every finitely generated $R$-module $M$ splits as a direct sum of a finitely generated projective module and a singular module; we describe right strongly semihereditary rings in terms of relations between their maximal and total rings of quotients; and we characterize extending Leavitt path algebras over finite graphs.
\end{abstract}

\maketitle

\section{Introduction}
\label{section_introduction}

The book \cite{Berberian} summarizes the trend to algebraize the operator theory concepts (e.g. von Neumann algebras) initiated by J. von Neumann and I. Kaplansky. In the introduction, S. K. Berberian explained the motivating idea by saying that ``Von Neumann algebras are blessed with an excess of structure -- algebraic, geometric, topological -- so much, that one can easily obscure, through proof by overkill, what makes a particular theorem work'' and that ``if all the functional analysis is
stripped away ... what remains should (be) completely accessible through algebraic avenues.''

The paper \cite{Lia_Baer} was motivated exactly by these ideas. It is not an overstatement to say that the current paper follows Berberian's ideas to an even larger extent. Namely, our main goal is to generalize some properties of finite von Neumann algebras to a class of rings even wider than that considered in \cite{Lia_Baer}. The rings in \cite{Lia_Baer} are Baer $*$-rings that satisfy nine axioms (later reduced to eight, see \cite{Lia_nine}), some of which are particularly restrictive. Here, we prove that results shown in \cite{Lia_Baer} hold for certain rings that are not necessary even Rickart $*$-rings. Also, in most cases, we do not assume any of the axioms from \cite{Lia_Baer}.

Besides considering more general classes of rings, we also consider another level of generality. Namely, the rings considered in \cite{Berberian} and \cite{Lia_Baer} are finite in the sense that $p\sims 1$ implies $p=1$ for all projections $p.$ The equivalence relation $\sims$ used here is defined by $p\sims q$ iff $p=xx^*$ and $q=x^*x$ for some $x.$  However, there is another equivalence relation, sometimes referred to as algebraic equivalence, defined for idempotents by $p\sima q$ iff $p=xy$ and $q=yx$ for some $x$ and $y.$ The concept of ``finiteness'' in this ``non-$*$'' setting means that  $p\sima 1$ implies $p=1$ for all idempotents $p$ and the ring is said to be directly finite (or Dedekind finite) in this case. A good portion of \cite{Berberian_web} is an attempt to unify these two approaches by studying a general relation $\sim.$ Following this idea, we formulate a set of general assumptions on a ring and a general relation $\sim$ that yield a dimension function on projections and  dimension of all $R$-modules (Theorem \ref{general_dim_on_R}). The dimension of modules is well-behaved (in the sense of Theorem \ref{LueckDimension}). In particular, it is additive.

As opposed to concepts like the rank of right noetherian or semiprime Goldie rings, this dimension is defined for rings that are not necessarily noetherian or Goldie. The Goldie reduced rank (defined as the uniform dimension of the quotient of a module and the Goldie closure of its trivial submodule) is defined for any ring so it also does not have the setback of the ranks we just mentioned. However, the Goldie reduced rank fails to distinguish different uniform modules or different modules decomposable into a sum of infinitely many direct summands. Although not completely general, the dimension we consider does not have these lapses.

The dimension of a ring $R$ as in Theorem \ref{general_dim_on_R} is defined via dimension of a regular overring $Q$ of $R$ such that the direct summands of $Q^n$ are in one-to-one correspondence with the direct summands of $R^n.$
The inspiration to consider a regular overring came from the fact that both in the case of finite von Neumann algebras and in the case of row-finite Leavitt path algebras, certain regular overring with some favorable properties is defined. In both cases, these regular overrings turn out to be the maximal right rings of quotients.
\begin{enumerate}
\item In case of a finite von Neumann algebra $\A$, the algebra of affiliated operators $\U$ is a regular overring of $\A$ (see \cite{Lueck} for details). In \cite[Chapter 6]{Berberian}, Berberian generalized this construction to a construction of a regular ring $Q$ of a Baer $*$-ring $R$ satisfying certain $*$-ring axioms, denoted by (A1)--(A7) in \cite{Lia_Baer}. By \cite[Proposition 3]{Lia_Baer}, the regular ring $Q$ is the maximal right ring of quotients of $R.$

\item  In case of a row-finite Leavitt path algebra $L(E)$, the regular ring $Q(E)$ is defined in \cite{Ara_Brustenga}. By \cite[Proposition 5.1]{Gonzalo_Lia_noetherian}, the regular ring $Q(E)$ of a noetherian Leavitt path algebra $L_K(E)$ over a positive definite field $K$ is the maximal right ring of quotients of $L_K(E).$
\end{enumerate}

The common properties of both of these two classes of rings are encompassed by considering strongly semihereditary rings.
A right nonsingular ring is right strongly semihereditary if every finitely generated nonsingular right module is projective (equivalently, extending). We show that a well-behaved dimension exists for a strongly semihereditary ring with a positive definite involution (Theorem \ref{strongly_semihereditary_has_dim}) for $\sima.$

The class of right strongly semihereditary rings is quite versatile: there are finite $AW^*$-algebras that are neither noetherian nor hereditary and there are noetherian Leavitt path algebras that are not Rickart $*$-rings. This motivates the study of right strongly semihereditary rings in their own right.

The paper is organized as follows. In Section \ref{section_torsion_theories}, we recall some known facts and definitions. In Section \ref{section_strongly_semihereditary}, we focus on the class of right strongly semihereditary rings. In particular, we relate the conditions on extendability of certain classes of modules with the conditions stating that some rings of quotients coincide (Proposition \ref{right_strongly_semihereditary}).

For a right strongly semihereditary ring $R,$ the direct summands of $R^n$ and $\Qrmax(R)^n$ are in a bijective correspondence. This bijection induces the isomorphism of the monoids of equivalence classes of finitely generated projective modules of $R$ and $\Qrmax(R)$ (Proposition \ref{K0_theorem}). We also prove that that every finitely generated $R$-module $M$ splits as a direct sum of a finitely generated projective module and a singular module (Theorem \ref{SplittingOfBnd}).

In Section \ref{section_involutive_strongly_semihereditary}, we turn to strongly semihereditary rings with involution. We characterize them in terms of the relations between their maximal and total rings of quotients (Proposition \ref{strongly_semihereditary_involutive_rings}) and describe some additional favorable properties of this class of rings (Proposition \ref{properties}). As a corollary of this characterization we prove that a Leavitt path algebra over a finite graph and a positive definite field is extending if and only if the graph is no-exit (Corollary \ref{extending_LPAs}).

In Section \ref{section_dimension_general} we first demonstrate that certain regular $*$-rings have dimension defined for a general equivalence relation $\sim$ on projections of the matrix rings (Proposition \ref{general_dim_on_Q}). The existence of dimension of a regular overring of certain semihereditary rings enables us to define dimension of those semihereditary rings as well (Theorem \ref{general_dim_on_R}).

In Section \ref{section_dimension_strongly_semihereditary}, we use Theorem \ref{general_dim_on_R} to prove that dimension exists for positive definite strongly semihereditary $*$-rings 
when $\sim$ is interpreted as $\sima$ (Theorem \ref{strongly_semihereditary_has_dim}). As corollaries of this fact, we obtain two results. First, in Corollary \ref{LPAs}, we obtain the dimension of noetherian Leavitt path algebras over positive definite fields. Second, in Corollary \ref{VNA-like}, we obtain dimension of a Baer $*$-ring $R$ satisfying axioms (A1)--(A7) (as defined in \cite{Lia_Baer}).

If a Baer $*$-ring $R$ satisfies axiom (A8) in addition to (A1)--(A7), then $M_n(R)$ is a finite Baer $*$-ring that satisfies generalized comparability axiom (by \cite[Chapter 9]{Berberian} and \cite{Lia_nine}) and has dimension for $\sims$ (by \cite{Lia_Baer}). In this case,
Corollary \ref{VNA-like} also shows that $\sima$ and $\sims$ are the same relation on all matrix algebras $M_n(R)$ and that, as a consequence, the dimension obtained for $\sima$ is the same as dimension obtained in \cite{Lia_Baer} for $\sims.$

In \cite[Chapter 9]{Berberian}, Berberian refers to axioms (A8) and (A9) as ``unwelcome guests''. By \cite{Lia_nine}, (A9) follows from (A1)--(A7). We note that (A8) is not needed in order to obtain results from \cite[Section 58]{Berberian} -- the finiteness of the matrix ring $M_n(R)$ follows from (A1)--(A7) alone. It remains open whether (A8) also follows from (A1)--(A7).

\section{Torsion theories, closures, rings of quotients}
\label{section_torsion_theories}

In this paper, a ring is an associative ring with identity.
Throughout this paper, we use the definition of torsion theory, hereditary and faithful torsion theory, and Gabriel filter  as given in \cite{Stenstrom}. If $\tau = (\te, \f)$ is a torsion theory with torsion class $\te$ and torsion-free class $\f$ and if $M$ is a right $R$-module, the module $\te M$ denotes the torsion submodule of $M$ and $\f M$ is the torsion-free quotient $M/\te M.$ If, in addition, $K$ is a submodule of $M,$ the {\em closure} $\cl_{\te}^M(K)$ of $K$ in $M$ with respect to $\tau$ is $\cl_{\te}^M(K) = \pi^{-1}(\te(M/K))$  where $\pi$ is
the natural projection $M\twoheadrightarrow M/K.$ We suppress the superscript $M$ from $\cl_{\te}^M(K)$ if it is clear from the context in which module the closure has been taken. If $K$ is equal to its closure in $M,$ $K$ is said to be {\em closed} submodule of $M$. Note that $\te(M/K) = \cl_{\te}(K)/K$ (thus $\te M = \cl_{\te}(0)$ and $\f(M/K)\cong M/\cl_{\te}(K)$), and that $\cl_{\te}(K)$ is the smallest closed submodule of $M$ containing $K.$

If $\tau_1 = (\te_1, \f_1)$ and $\tau_2 = (\te_2, \f_2)$ are two torsion theories, $\tau_1$ is {\em smaller} than
$\tau_2$ ($\tau_1\leq\tau_2$) iff $\te_1\subseteq\te_2$ (equivalently $\f_1\supseteq\f_2$).

If $\tau$ is a hereditary torsion theory with Gabriel filter $\ef
= \ef_{\tau}$ and $M$ is a right $R$-module, the {\em module
of quotients} of $M$ is defined by:
\[M_{\ef} = \dirlim_{I\in\ef}\homo_R(I, M/\te M).\] The module $R_{\ef}$ has a
ring structure and is called the {\em
right ring of quotients}. The module $M_{\ef}$ is a right
$R_{\ef}$-module. Any additional necessary background can be found in \cite{Stenstrom}.

The {\em localization map} $q_M:M\rightarrow M_{\ef}$ (the composition of
$M\twoheadrightarrow M/\te M$ with $M/\te M\hookrightarrow M_{\ef}$) defines a left exact functor $q$ from the category of right $R$-modules to the category of right $R_{\ef}$-modules (see \cite[p. 197--199]{Stenstrom}). The kernel and cokernel of $q_M$ are torsion modules for every $M.$

We recall some examples of torsion theories and rings of quotients that we use in this paper.

\subsection{Examples.}
\label{Examples}

\subsubsection{ }\label{Lambek_example} All dense right ideals constitute a Gabriel filter. The corresponding torsion theory is called the {\em Lambek torsion theory} $\tau_L.$ It is the largest hereditary faithful torsion theory. The maximal right ring of quotients $\Qrmax(R)$ is the right ring of quotients with respect to $\tau_L.$

\subsubsection{ }\label{Goldie_example} The class of nonsingular modules over a ring $R$ is closed
under submodules, extensions, products and injective envelopes.
Thus, it is a torsion-free class of a hereditary torsion theory.
This theory is called the {\em Goldie torsion theory}.  For description of the closure operator in this torsion theory see \cite[Definition 7.31]{Lam}).

The Lambek theory is smaller than the Goldie theory. If $R$ is right nonsingular, then the Lambek and Goldie theories coincide. Following the notation from \cite{Lia_Baer}, the Goldie (= Lambek) torsion theory is denoted by $(\T, \bigP)$ in this case.

By \cite[Corollary 7.44$'$]{Lam}, there is a one-to-one correspondence between Goldie closed submodules of a nonsingular module $M$ of any ring $R$ and the Goldie closed submodules of the injective envelope $E(M)$ (for the definition of closed submodules, see \cite[Definition 7.31]{Lam}). By the remark after the proof of \cite[Corollary 7.44$'$]{Lam}, the closed submodules of $E(M)$ are precisely the direct summands of $E(M).$ Moreover, if $N$ is a closed submodule of $M$, then its closure in $E(M)$ is a copy of the injective envelope $E(N).$ By \cite[Proposition 7.44]{Lam}, a submodule $N$ of $M$ is closed if and only if $N$ is a complement in $M$. This gives us a one-to-one correspondence
 \[\{\mbox{complements in }M\} \longleftrightarrow\{\mbox{direct summands of }E(M)\}\] given by
$N \mapsto$ the closure of $N$ in $E(M)$ that is equal to a copy of $E(N).$ The inverse map is given by $L\mapsto L\cap M$ (see the proof of \cite[Corollary 7.44$'$]{Lam}). In particular, for a right nonsingular ring $R$ this gives us a one-to-one correspondence between Goldie (or Lambek) closed submodules of $R^n$ and direct summands of $\Qrmax(R)^n.$

\subsubsection{ }\label{tensoring_example} Let $R$ be a subring of a ring $S$ and let $S$ be flat as a left $R$-module. The collection of all $R$-modules $M$ such that $M\otimes_R S = 0$ defines a torsion class of a hereditary and faithful torsion theory denoted by $\tau_S.$ This theory is contained in the Lambek torsion theory.

\subsubsection{ }\label{perfect_example} A ring of right quotients $S$ of $R$ is said to be perfect if $S\otimes_R S\cong S$ and $S$ is left $R$-flat (see \cite[Ch. 11]{Stenstrom} for more details). A hereditary torsion theory $\tau$ with Gabriel filter $\ef$ is
called {\em perfect} if the right ring of quotients $R_{\ef}$ is perfect and $\ef=\{I| q_I(I)R_{\ef}=R_{\ef}\}$ where $q$ is the localization map. Also, if a ring of quotients $S$ is perfect, then $S$ is the ring of quotients of $R$ with respect to $\tau_S.$

Every ring has a maximal perfect right ring of quotients, unique up to isomorphism (\cite[Theorem XI 4.1]{Stenstrom}). It is
called {\em total right ring of quotients} and is denoted by $\Qrtot(R).$ By \cite[Theorem 12]{Lia_quotients}, if $R$ is right semihereditary, then $\tau_{\Qrmax}$ is a perfect torsion theory with the ring of quotients $\Qrtot.$ Following the notation in \cite{Lia_Baer}, we denote the torsion theory $\tau_{\Qrmax}$ also by $(\smallt, \p).$

\subsubsection{ }\label{bounded_example}
If $R$ is any ring, let $(\bnd, \unb)$ be the torsion theory in which a module $M$ is in $\bnd$ iff $\homo_R(M,R)=0$. We call a module in $\bnd$ a {\em bounded module} and a module in $\unb$ an {\em unbounded module}. This theory is the largest torsion theory in which $R$ is
torsion-free (thus it is larger than Lambek torsion theory). It is not necessarily hereditary.

For a right semihereditary ring we have
\[\mbox{trivial }\leq (\smallt, \p)\leq (\T, \bigP) \leq (\bnd, \unb)\leq \mbox{ improper}.\]
\cite[p. 24]{Lia_Baer} contains references for examples showing that all the inequalities can be strict.

\subsection{Symmetric rings of quotients}
If $\ef_l$ and $\ef_r$ are left and right Gabriel filters, the symmetric filter $_l\ef_r$ induced by $\ef_l$ and $\ef_r$ is defined to be the set of (two-sided) ideals of $R$ containing ideals of the form $IR+RJ$, where $I\in\ef_l$ and $J\in \ef_r$ (equivalently, the set of right ideals of $R\otimes_{\Zset}R^{op}$ containing ideals of the form $J\otimes R^{op}+R\otimes I$). The torsion theories corresponding to $\ef_l,$  $\ef_r,$ and $_l\ef_r$ are denoted by $\tau_l,$ $\tau_r,$ and $_l\tau_r$ respectively.  The symmetric ring of quotients $_{\ef_l}R_{\ef_r}$ (or $_lR_r$ for short) with respect to $_l\ef_r$ is defined by \[_{\ef_l}R_{\ef_r}=\dirlim_{K\in _l\ef_r}\;  \homo(K, \frac{R}{_l\te_r R})\] where the homomorphisms in the formula are $R$-bimodule homomorphisms (see \cite{Ortega_paper}).

If $\ef_l$ is the filter of dense left and $\ef_r$ the filter of dense right ideals, the symmetric ring of quotients with respect to the filter induced by $\ef_l$ and $\ef_r$ is the maximal symmetric ring of quotients  $\Qmax(R).$ In \cite{Lia_symmetric}, the symmetric version of one-sided perfect rings of quotients is defined (see \cite[Theorem 4.1]{Lia_symmetric}). In \cite{Lia_symmetric} also, the total symmetric ring of quotients $\Qtot$ is defined as a symmetric version of the total one-sided rings of quotients $\Qrtot$ and $\Qltot$ (see \cite[Theorem 5.1]{Lia_symmetric}).

\section{Right strongly semihereditary rings}
\label{section_strongly_semihereditary}

In \cite{Goodearl} (see also \cite[Theorem 7.1, XII]{Stenstrom}), Goodearl shows that for a right nonsingular
ring $R$, the following are equivalent:
\begin{itemize}
\item[(i)] Every finitely generated nonsingular module can be
embedded in a free module.

\item[(ii)] $\Qrmax(R)$ is a perfect left ring of quotients of $R.$
\end{itemize}

Using Goodearl's result, the following holds (see  \cite[Corollary 7.4, XII]{Stenstrom}). For a ring $R,$ the following conditions are equivalent:
\begin{enumerate}
\item  $R$ is right semihereditary and $\Qrmax(R)$ is a perfect left ring of quotients of $R.$

\item $R$ is right nonsingular and every finitely generated nonsingular module is projective.
\end{enumerate}
Also, if these conditions are satisfied then $R$ is also left
semihereditary and $\Qrmax(R)$ is a perfect right ring of quotients of $R.$

In \cite[Theorem 2.4, p. 54]{Evans}, Evans shows that the following conditions are
equivalent:
\begin{itemize}
\item[(3)] $R$ is right semihereditary and
$\Qrmax(R)$ is a perfect left and a perfect right ring of quotients of $R.$

\item[(4)] $R$ is right nonsingular and complemented right ideals of the matrix ring $M_n(R)$ are generated by an idempotent for all $n.$
\end{itemize}

Evans calls the rings satisfying these equivalent conditions
right strongly extended semihereditary. For brevity, we call them {\em right strongly semihereditary}. We say that a ring is {\em strongly semihereditary} if it is both left and right strongly semihereditary.

Note that a right strongly semihereditary ring is both left and right semihereditary ring ($R$ is left semihereditary by \cite[Corollary 7.4, XII]{Stenstrom}).

Condition (4) can be rephrased in terms of extendability of the ring $M_n(R).$ Recall that a right $R$-module $M$ is said to be {\em extending or CS} (``complements are summands'') if every complemented submodule of $M$ is a direct summand of $M$ (equivalently, every submodule of $M$ is essential in a direct summand of $M$, see \cite[Lemma 6.41]{Lam}). Condition (4) then can be rephrased as
\begin{itemize}
\item[(5)] $R$ is right nonsingular and the matrix ring $M_n(R)$ is extending (as a right $M_n(R)$-module) for all $n.$
\end{itemize}

If $R$ satisfies (5), then $M_n(R)$ is also right nonsingular (\cite[Exercise 18.3]{Lam}). The annihilators in a right nonsingular ring are essentially closed (\cite[Lemma 7.51]{Lam}) and, thus, complemented (\cite[Proposition 6.32]{Lam}). So, the annihilators of $M_n(R)$ are direct summands since $M_n(R)$ is extending. This demonstrates that the matrix rings over a right strongly semihereditarity ring are Baer.

By \cite[11.4, 12.8, and 12.17]{Extending_book}, (5) is equivalent to:
\begin{itemize}
\item[(6)] $R$ is right nonsingular and $R^n$ is extending (as a right $R$-module) for all $n.$

\item[(7)] $R$ is right semihereditary and $R^2$ is extending as a right $R$-module.

\item[(8)] $R$ is right nonsingular and every finitely generated projective module is extending.
\end{itemize}

Further, these conditions are equivalent with the following.
\begin{itemize}
\item[(9)]  $R$ is right nonsingular and every finitely generated nonsingular module is extending.
\end{itemize}
Indeed, (9) clearly implies (6). Conversely, assume (2) and let $M$ be a finitely generated nonsingular module and let $K$ be a submodule of $M.$ The quotient $M/\cl_{\T}(K)$ is finitely generated and nonsingular also. Thus, it is projective by (2). Hence, $\cl_{\T}(K)$ is a direct summand of $M$. Since $M$ is nonsingular, every submodule of $M$ is essential in its Goldie closure (see \cite[Proposition 7.44]{Lam}). Thus, $K$ is essential in a direct summand of $M$ and so $M$ is extending. This proves (9).

We note another condition equivalent to (1)--(9).
\begin{itemize}
\item[(10)] $R$ is right semihereditary, $\Qrmax(R)$ is a perfect left ring of quotients of $R$ and $\Qrmax(R)=\Qrtot(R).$
\end{itemize}
(10) trivially implies (3). For the converse note the following fact from \cite{Lia_quotients}: If $R$ is right semihereditary, the total right ring of quotients $\Qrtot(R)$ can be obtained as the ring of quotients with respect to the torsion theory of tensoring by $\Qrmax(R)$ (see Example \ref{perfect_example} and \cite[Theorem 12]{Lia_quotients}). This torsion theory is perfect by \cite[Lemma 8, part 3]{Lia_quotients}. Thus, if $\Qrmax(R)$ is already a perfect right ring of quotients, then $\Qrmax(R)$ is equal to $\Qrtot(R)$ (also follows from \cite[Lemma 9, part 6]{Lia_quotients}).

Thus, we have the following.
\begin{proposition} A ring $R$ is right strongly semihereditary if any of the equivalent conditions (1)--(10) holds.
In that case,  $M_n(R)$ is right strongly semihereditary as well.
\label{right_strongly_semihereditary}
\end{proposition}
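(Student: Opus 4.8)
The first assertion requires no work: the paragraphs preceding the statement establish that conditions (1)--(10) are mutually equivalent, and condition (3) (equivalently (4)) is precisely Evans' defining property of a right strongly semihereditary ring. So ``right strongly semihereditary'' is by definition the same as any one of (1)--(10).

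For the second assertion the plan is to verify that $M_n(R)$ satisfies condition (5). The one ingredient beyond what is already on the page is the elementary fact that the matrix construction composes: for every $k$ there is a ring isomorphism $M_k(M_n(R))\cong M_{kn}(R)$. Thus the family of matrix rings over $M_n(R)$ is, up to isomorphism, the subfamily $\{M_{kn}(R)\}_{k\geq 1}$ of the family of matrix rings over $R$.

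Now assume $R$ satisfies (5), i.e.\ $R$ is right nonsingular and $M_m(R)$ is extending as a right module over itself for every $m$. First, $M_n(R)$ is right nonsingular: this was already noted in the text via \cite[Exercise 18.3]{Lam}. Second, for every $k$ the ring $M_k(M_n(R))\cong M_{kn}(R)$ is extending as a right module over itself, since $M_{kn}(R)$ is extending by the hypothesis on $R$ and ``extending'' is a property of the module category, hence invariant under the ring isomorphism $M_k(M_n(R))\cong M_{kn}(R)$. Therefore $M_n(R)$ is a right nonsingular ring all of whose matrix rings are extending; that is, $M_n(R)$ satisfies (5), and so $M_n(R)$ is right strongly semihereditary.

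I expect no genuine obstacle here: the only point that needs a word of care is transporting the extending property along the isomorphism $M_k(M_n(R))\cong M_{kn}(R)$, and that is immediate. The proposition is in essence a bookkeeping consequence of the fact that the defining conditions (5)--(9) are phrased uniformly over all matrix sizes, so closing under a single further matrix layer changes nothing. (One could equally run the argument through condition (6) or (8) in place of (5), with the same composition-of-matrix-rings identity doing the work.)
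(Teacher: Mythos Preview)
Your proof is correct and follows essentially the same route as the paper: both arguments dispose of the first assertion by pointing to the preceding discussion, and both obtain the second by invoking the isomorphism $M_k(M_n(R))\cong M_{kn}(R)$ to see that $M_n(R)$ inherits condition (5) (the paper mentions (4) or (5)). The paper additionally records that matrix rings over $R$ are right semihereditary via \cite[Corollary 18.6]{Lam}, but this is not needed for the route through (5) that you chose.
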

\begin{proof}
We demonstrated the equivalence of conditions (1)--(10). The matrix rings over $R$ are right semihereditary by \cite[Corollary 18.6]{Lam}. Since the rings $M_m(M_n(R))$ and $M_{mn}(R)$ are isomorphic for any $m$ and $n,$ if $R$ satisfies conditions (4) or (5), all the matrix rings over $R$ satisfy those conditions as well. Thus, the matrix rings are right strongly semihereditary as well.
\end{proof}

\begin{example}
\begin{enumerate}
\item Every regular and self-injective ring is strongly semihereditary (such a ring is both right and left semihereditary and $R=\Qrmax(R)=\Qlmax(R),$ thus $R=\Qrtot(R)=\Qltot(R)$ as well). In particular, every semisimple ring is strongly semihereditary.

\item Every commutative semihereditary and noetherian ring is strongly semihereditary since for such a ring $\Qrmax(R)=\Qlmax(R)=\Qrcl(R)=\Qlcl(R),$ thus $\Qrtot(R)=\Qltot(R)=\Qrmax(R)$.

\item Let $E$ be a directed graph and $K$ a field. If $E$ is finite (i.e. has finitely many vertices and edges), the Leavitt path algebra $L_K(E)$ (introduced in \cite{Gene_Gonzalo1} and \cite{Ara_Moreno_Pardo}) is hereditary by \cite[Theorem 3.5]{Ara_Moreno_Pardo} and, thus, semihereditary as well. The condition that $E$ is a no-exit graph (i.e. no cycle in $E$ has an exit) is equivalent to the condition that $L_K(E)$ is noetherian (by \cite[Theorems 3.8 and 3.10]{AAS2}). In \cite[Proposition 5.1]{Gonzalo_Lia_noetherian}, it is shown that if $K$ is a field with a positive definite involution (i.e. $\sum_{i=1}^n \overline{k_i}k_i=0$ implies $k_i=0$ for all $i=1,\ldots,n$ and for all $n$) and $E$ a finite no-exit graph, then $\Qrmax(L_K(E))=\Qlmax(L_K(E))=\Qltot(L_K(E))=\Qrtot(L_K(E)).$ Thus, $L_K(E)$ is (left and right) strongly semihereditary. Note that $L_K(E)$ is not regular if $E$ has cycles and not commutative if $E$ has at least two connected vertices.

\item Finite $AW^*$-algebras (in particular finite von Neumann algebras) are strongly semihereditary. More generally, Baer $*$-rings studied in \cite{Lia_Baer} are strongly semihereditary. This follows from \cite[Proposition 3 and Corollary 5]{Lia_Baer}. In contrast with the previous example, this class of rings contains rings that are neither right noetherian nor right hereditary (e.g. group von Neumann algebras of infinite groups by \cite[Corollary 5]{Lia_semisimple} and \cite[Example 9.11]{Lueck}).

These examples illustrate that the class of right strongly semihereditary rings is quite versatile.

\item Let $R=\{(a_n)\in\Qset\times\Qset\times\ldots\;|\; (a_n)\mbox{ is eventually constant }\}.$ The ring $R$ is commutative so the left and the right ring of quotients coincide. The ring $R$ is regular, so $R$ is semihereditary and $\Qltot(R)=\Qrtot(R)=R.$ We also have $\Qlmax(R)=\Qrmax(R)=\Qset\times\Qset\times\ldots$ (\cite[Exercise 23, p. 328]{Lam}). Thus, $R$ is an example of a semihereditary ring that is not strongly semihereditary.
\end{enumerate}
\label{right_strongly_semihereditary_example}
\end{example}

A right strongly semihereditary ring $R$ has another favorable property -- its finitely generated projective modules match those of $\Qrmax(R).$ The following proposition has been shown for the class of Baer $*$-rings from \cite{Lia_Baer} and for noetherian Leavitt path algebras in \cite{Gonzalo_Lia_noetherian}. We note that it holds for every right strongly semihereditary ring.

\begin{proposition} Let $R$ be right strongly semihereditary. Let $Q$ denote $\Qrmax(R).$
\begin{enumerate}
\item[(i)] For every finitely generated nonsingular (equivalently projective) $R$-module $P$, there is
a one-to-one correspondence \[\{\mbox{direct summands of }P\}
\longleftrightarrow\{\mbox{direct summands of }E(P)=P\otimes_{R}Q\}\] given by
$N \mapsto$ $N\otimes_{R}Q = E(N).$ The inverse map is given by
$L\mapsto L\cap P.$

\item[(ii)] The isomorphism $\varphi: \ve(R)\rightarrow \ve(Q)$ of monoids of isomorphism classes of finitely generated projective modules induced by the map  $P\mapsto P\otimes_{R}Q$ has the inverse induced by $S\mapsto S\cap R^n$ if $S$ is a finitely generated projective $Q$-module that can be embedded in $Q^n.$
\end{enumerate}
\label{K0_theorem}
\end{proposition}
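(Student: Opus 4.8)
The plan is to build directly on the correspondence already in place. By condition (6)/(9) in Proposition~\ref{right_strongly_semihereditary}, a finitely generated $R$-module $P$ is nonsingular if and only if it is projective, and every such $P$ is extending; moreover $Q=\Qrmax(R)$ is the injective envelope of $R$ in the nonsingular torsion theory, so for finitely generated projective $P$ we have $E(P)\cong P\otimes_R Q$ (since $Q$ is flat and $-\otimes_R Q$ commutes with finite direct sums, and $E(R^n)=Q^n$). This identifies the ambient objects in part~(i).

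For part~(i), I would invoke the one-to-one correspondence recalled in Subsection~\ref{Goldie_example}: for a nonsingular module $M$ of any ring, the complements in $M$ correspond bijectively to the direct summands of $E(M)$ via $N\mapsto$ (closure of $N$ in $E(M)$, which is a copy of $E(N)$), with inverse $L\mapsto L\cap M$. Apply this with $M=P$ and $E(P)=P\otimes_R Q$. The point that needs the strongly semihereditary hypothesis is the replacement of ``complements in $P$'' by ``direct summands of $P$'': since $P$ is extending, every complement in $P$ is already a direct summand, and conversely every direct summand is a complement. Likewise, a direct summand $N$ of $P$ is finitely generated projective, hence nonsingular, so its closure in $E(P)$ is $E(N)=N\otimes_R Q$; and a direct summand $L$ of $E(P)=P\otimes_R Q$ is a finitely generated projective $Q$-module, so $L\cap P$ is the corresponding summand of $P$. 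This gives the stated bijection and the description of both maps.

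For part~(ii), the map $P\mapsto P\otimes_R Q$ on isomorphism classes is a monoid homomorphism $\ve(R)\to\ve(Q)$ because $-\otimes_R Q$ is additive; one must check it is well-defined into $\ve(Q)$ (i.e. $P\otimes_R Q$ is finitely generated projective over $Q$, which is clear since $P$ is a summand of $R^n$ and $R^n\otimes_R Q=Q^n$). Injectivity: if $P\otimes_R Q\cong P'\otimes_R Q$, embed both $P,P'$ as summands of some common $R^n$ (after stabilizing); by part~(i) applied to $R^n$, a summand of $R^n$ is recovered as $(\text{its image in }Q^n)\cap R^n$, and an isomorphism $P\otimes_R Q\cong P'\otimes_R Q$ over $Q$ transports to an isomorphism $P\cong P'$ by intersecting with $R^n$ — here I would use that the $Q$-module isomorphism restricts to an $R$-module map on the intersections with $R^n$ and is an isomorphism there because intersecting with $R^n$ is inverse to $-\otimes_R Q$ on summands. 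Surjectivity: given a finitely generated projective $Q$-module $S$, write $S$ as a summand of $Q^n$; then $S\cap R^n$ is a complement in $R^n$, hence (by extendability) a direct summand, hence finitely generated projective over $R$, and $(S\cap R^n)\otimes_R Q = S$ by part~(i). Thus $\varphi$ is an isomorphism with the inverse induced by $S\mapsto S\cap R^n$, as claimed.

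The main obstacle I anticipate is the bookkeeping in part~(ii): showing that a $Q$-linear isomorphism between $P\otimes_R Q$ and $P'\otimes_R Q$, when restricted via $-\cap R^n$, actually yields a well-defined $R$-linear isomorphism $P\cong P'$ rather than merely an abstract equality of images. This is where one genuinely uses that $N\mapsto N\otimes_R Q$ and $L\mapsto L\cap R^n$ are mutually inverse \emph{as maps on the lattices of summands} (part~(i)), together with functoriality of $-\otimes_R Q$ on morphisms and the fact that every homomorphism between finitely generated projective $R$-modules extends uniquely to the $Q$-tensorings and, conversely, every $Q$-homomorphism between the tensorings restricts to the intersections. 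Once this dictionary between morphisms is set up carefully, injectivity and surjectivity of $\varphi$ are formal; the rest is routine.
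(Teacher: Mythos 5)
Your proposal is correct and follows essentially the same route as the paper: identify $E(P)=P\otimes_R Q$, invoke the correspondence between complements in $P$ and direct summands of $E(P)$ from Example \ref{Goldie_example}, and use the extending property (condition (8)) to replace complements by direct summands; part (ii) is then formal. You supply more detail for (ii) than the paper, which simply states that it follows directly from (i), but the underlying argument is the same.
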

\begin{proof}
If $P$ is finitely presented nonsingular, then \[P\otimes_R Q= E(P)\] by \cite[12.14]{Extending_book}. As a consequence, there is an embedding of monoids of isomorphism classes of finitely generated projective modules $\ve(R)\rightarrow \ve(Q)$ induced by the map $P\mapsto P\otimes_R Q.$ The claim (i) then follows from the one-to-one correspondence between complements in $P$ and direct summands of $E(P)$ expanded on in Example \ref{Goldie_example} and the fact that complements in $P$ are exactly direct summands of $P$ by condition (8). The claim (ii) follows directly from (i).
\end{proof}

We take a closer look at closures in different torsion theories now.

\begin{proposition} Let $R$ be any ring and $P$ be a submodule of $R^n.$
\begin{enumerate}
\item \[\begin{array}{rcl}
\cl_{\bnd}(P) & = & \{x\in R^n | f(x)=0 \mbox{ if }f\in\homo_R(R^n,R)\mbox{  with }P\subseteq \ker f\}\\
 & = &\bigcap\{\ker f | f\in\homo_R(R^n,R)\mbox{ with }P\subseteq \ker f\}\\
 & \subseteq &\bigcap\{ S | S\mbox{ is a direct summand of }R^n\mbox{ with }P\subseteq S\}\\
\end{array}\]

If $R$ is right semihereditary, then the last inclusion is an equality.

\item If $R$ is right nonsingular,
\begin{flushleft}
\begin{tabular}{rcl}
$\cl_{\T}(P)$ & = & largest submodule of $R^n$ in which $P$ is essential\\
& = & smallest complement containing $P$ in $R^n$ \\
& = & intersection of complements containing $P$ in $R^n.$
\end{tabular}\end{flushleft}

\item If $R$ is right semihereditary such that $M_n(R)$ is Baer, then $\cl_{\bnd}(P)$ is a direct summand of $R^n.$

\item If $R$ is right strongly semihereditary, $\cl_{\T}(P)=E(P)\cap R^n=\cl_{\bnd}(P)$ and it is a direct summand of $R^n.$
\end{enumerate}
\label{VariousClosures}
\end{proposition}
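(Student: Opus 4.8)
The plan is to prove the four parts essentially in the order stated, building each later part on the earlier ones and on the machinery assembled in Section~\ref{section_torsion_theories} and in Propositions~\ref{right_strongly_semihereditary} and~\ref{K0_theorem}. For part (1), I would first unwind the definition of the closure in the bounded torsion theory: $\cl_{\bnd}(P)/P = \bnd(R^n/P)$ is the torsion submodule of $R^n/P$, and a coset $x+P$ lies in it iff every $R$-homomorphism $R^n/P \to R$ kills it, i.e.\ iff every $f \in \homo_R(R^n,R)$ with $P \subseteq \ker f$ satisfies $f(x)=0$. This gives the first equality, and the second (the intersection description) is then immediate. The inclusion into the intersection of direct summands $S \supseteq P$ holds because a direct summand $S$ of $R^n$ is a kernel of an idempotent endomorphism, hence a finite intersection of kernels of coordinate-type functionals into $R$; so each such $S$ appears on the right-hand side of the functional description, forcing $\cl_{\bnd}(P) \subseteq S$. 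For the reverse inclusion when $R$ is right semihereditary: given $f \in \homo_R(R^n,R)$ with $P \subseteq \ker f$, the image $f(R^n)$ is a finitely generated right ideal, hence projective (semihereditary), so $\ker f$ is a direct summand of $R^n$; thus every kernel appearing on the right of the functional description is itself a direct summand containing $P$, and the intersection over direct summands is contained in the intersection over these kernels. Hence equality.

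For part (2), right nonsingularity makes the Goldie and Lambek theories coincide, and the closure $\cl_{\T}(P)$ is the preimage under $R^n \to R^n/P$ of the singular (= torsion) submodule; since $R^n/P$ is nonsingular modulo its torsion, $P$ is essential in $\cl_{\T}(P)$, and $\cl_{\T}(P)$ is the largest such submodule — this is exactly the content of \cite[Proposition~7.44]{Lam} together with the discussion in Example~\ref{Goldie_example}, where closed submodules of a nonsingular module are precisely the complements. So $\cl_{\T}(P)$ is the smallest complement containing $P$, and since an intersection of complements in a nonsingular module is again a complement (closed submodules are closed under intersection), it equals the intersection of all complements containing $P$. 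For part (3), combine part (1) with the Baer hypothesis on $M_n(R)$: by the semihereditary case of (1), $\cl_{\bnd}(P) = \bigcap\{S : S \text{ a direct summand of } R^n,\ P \subseteq S\}$; each such $S$ is the image of an idempotent in $M_n(R)$, and the intersection of the corresponding ideals, being a right annihilator in the Baer ring $M_n(R)$, is generated by an idempotent, i.e.\ is again a direct summand of $R^n$.

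For part (4), right strong semiheredity gives all of (1)--(3) at once (by Proposition~\ref{right_strongly_semihereditary}, $M_n(R)$ is right strongly semihereditary, hence Baer, as noted in the text). The identity $\cl_{\T}(P) = E(P) \cap R^n$ follows from Example~\ref{Goldie_example} and Proposition~\ref{K0_theorem}: under the correspondence between complements in $R^n$ and direct summands of $E(R^n) = Q^n$, the smallest complement containing $P$ corresponds to $E(P)$, whose intersection back with $R^n$ recovers $\cl_{\T}(P)$. It remains to identify $\cl_{\T}(P)$ with $\cl_{\bnd}(P)$. One inclusion, $\cl_{\T}(P) \subseteq \cl_{\bnd}(P)$, holds in general since the Goldie theory is smaller than $(\bnd,\unb)$ (Example~\ref{bounded_example}), so closures in the larger theory are larger. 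For the reverse, use part (2) and part (3): $\cl_{\bnd}(P)$ is a direct summand of $R^n$, hence a complement, hence closed in the Goldie theory; since it contains $P$ and $\cl_{\T}(P)$ is the \emph{smallest} complement containing $P$, we get $\cl_{\bnd}(P) \supseteq \cl_{\T}(P)$ — wait, that is the wrong direction. Instead: $\cl_{\T}(P)$ is a direct summand (by (8) for right strongly semihereditary rings, every finitely generated projective, in particular every complemented submodule of $R^n$, is a direct summand), and being a direct summand it is $\cl_{\bnd}$-closed (a direct summand $S$ satisfies $\homo_R(R^n/S, R) \neq 0$ detecting everything outside $S$; more precisely $R^n/S$ is projective, hence $\bnd$-torsion-free, so $\cl_{\bnd}(S)=S$). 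Thus $\cl_{\bnd}(P) \subseteq \cl_{\bnd}(\cl_{\T}(P)) = \cl_{\T}(P)$, giving equality. The main obstacle I anticipate is precisely this last identification: keeping straight which torsion theory is larger and correctly using right strong semiheredity (condition (8)) to know that the relevant closed submodules are genuine direct summands, so that both closures are $\bnd$-closed and the "smallest complement/direct summand containing $P$" characterizations can be played off against each other. Once direct-summand-hood is available on both sides, the two closures squeeze together.
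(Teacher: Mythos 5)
Your proof is correct and follows essentially the same route as the paper's: unwinding the definition of $\cl_{\bnd}$ for part (1), citing \cite[Proposition 7.44]{Lam} for part (2), using the Baer property of $M_n(R)$ (intersections of idempotent-generated right ideals are right annihilators, hence idempotent-generated) for part (3), and playing the ``complements are direct summands'' property off against the $\bnd$-closedness of direct summands for part (4). The only cosmetic differences are that in (1) you realize a direct summand as a finite intersection of kernels of functionals where the paper instead notes that $\cl_{\bnd}(S)/S$ embeds in the torsion-free module $R^n$, and in (4) you argue via monotonicity of $\cl_{\bnd}$ applied to $\cl_{\T}(P)$ rather than directly identifying the two intersection descriptions; both variants are valid.
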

\begin{proof}
(1) The first two sets are equal by the definition of closure with respect to the torsion theory $(\bnd, \unb).$ The second two sets are clearly equal.

If $S$ is a direct summand of $R^n,$ then $S$ is a direct summand of $\cl_{\bnd}(S)$ too, so the torsion quotient $\cl_{\bnd}(S)/S$ embeds in the torsion-free module $R^n.$ Thus, $\cl_{\bnd}(S)/S$ is trivial and so $\cl_{\bnd}(S)=S.$ This demonstrates that if $S$ is a direct summand of $R^n$ that contains $P,$ then $\cl_{\bnd}(P)\subseteq \cl_{\bnd}(S)=S.$ Thus, the first set is contained in the fourth.

If $R$ is right semihereditary, every  $f\in\homo_R(R^n,R)$ with $P\subseteq \ker f$ has the image that is finitely generated and projective and so $\ker f$ is a direct summand of $R^n$. Thus, every such map $f$ determines one direct summand $S$ of $R^n$ with $P\subseteq S$ and so the fourth set is contained in the third.

(2) If $R$ is right nonsingular, the equality of the four sets follows from \cite[Proposition 7.44]{Lam}. The module $P$ is essential in closure $\cl_{\T}(P)$ since $R^n$ is nonsingular.

(3) If $M_n(R)$ is Baer, the completeness of the lattice of idempotents of $M_n(R)$ implies the completeness of the lattice of direct summands of $R^n.$ Thus the claim follows from part (1).

(4) If $R$ is right strongly semihereditary, every complement in $R^n$ is a direct summand. So, the intersection of all complements of $R^n$ containing $P$ is the same as the intersection of all direct summands of $R^n$ containing $P.$ Thus, $\cl_{\T}(P)=\cl_{\bnd}(P)$ and it is a direct summand by part (3).

The equality $\cl_{\T}(P)=E(\cl_{\T}(P))\cap R^n$ holds by one-to-one correspondence from Example \ref{Goldie_example}. The module $P$ is an essential subset of $\cl_{\T}(P)$ since $R^n$ is nonsingular and so $E(P)=E(\cl_{\T}(P)).$ Thus $\cl_{\T}(P)=E(P)\cap R^n.$
\end{proof}

The following theorem has also been shown for the class of Baer $*$-rings studied in \cite{Lia_Baer}. We show that it holds for all right strongly semihereditary rings.

\begin{theorem} Let $R$ be a right semihereditary ring such that $M_n(R)$ is Baer for every $n$, $M$ a finitely generated $R$-module and $K$ a submodule of $M.$
\begin{enumerate}
\item The module $M/\cl_{\bnd}(K)$ is finitely generated projective and
$\cl_{\bnd}(K)$ is a direct summand of $M.$ In particular, $M$ splits to a direct sum of finitely generated projective module $\unb M$ and torsion module $\bnd M$
\[M=\bnd M\oplus \unb M.\]

\item If $R$ is right strongly semihereditary, then $\cl_{\T}(K)=\cl_{\bnd}(K)$ is a direct summand of $M$ and $(\bnd, \unb)=(\T, \bigP)$ on the class of finitely generated modules.
\end{enumerate}
\label{SplittingOfBnd}
\end{theorem}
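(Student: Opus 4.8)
The plan is to establish part (1) by reducing a finitely generated module to a quotient of a free module and then using the splitting of closures in $R^n$ proved in Proposition \ref{VariousClosures}. First I would write $M$ as a quotient $\pi : R^n \twoheadrightarrow M$ and set $L = \pi^{-1}(K) \subseteq R^n$, so that $M/K \cong R^n/L$ and, since the torsion theory $(\bnd,\unb)$ is defined by vanishing of $\homo_R(-,R)$ and $\cl_{\bnd}$ commutes with the projection $\pi$ (by the very definition of closure, $\cl_{\bnd}^M(K) = \pi(\cl_{\bnd}^{R^n}(L))$ whenever $\ker\pi \subseteq L$, which holds here), we get $M/\cl_{\bnd}^M(K) \cong R^n/\cl_{\bnd}^{R^n}(L)$. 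By Proposition \ref{VariousClosures}(3), since $R$ is right semihereditary with $M_n(R)$ Baer, $\cl_{\bnd}^{R^n}(L)$ is a direct summand of $R^n$; hence the quotient $R^n/\cl_{\bnd}^{R^n}(L)$ is finitely generated projective, and therefore so is $M/\cl_{\bnd}^M(K)$. Consequently the short exact sequence $0 \to \cl_{\bnd}^M(K) \to M \to M/\cl_{\bnd}^M(K) \to 0$ splits, so $\cl_{\bnd}^M(K)$ is a direct summand of $M$. Taking $K = 0$ gives $\cl_{\bnd}^M(0) = \bnd M$ as a direct summand with complement a finitely generated projective module, which we call $\unb M$ (it is indeed in $\unb$ and isomorphic to $M/\bnd M$), giving the splitting $M = \bnd M \oplus \unb M$.

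For part (2), assume in addition that $R$ is right strongly semihereditary. The key input is Proposition \ref{VariousClosures}(4): for any submodule $P$ of $R^n$ one has $\cl_{\T}^{R^n}(P) = \cl_{\bnd}^{R^n}(P)$. Applying this to $P = L$ and pushing forward along $\pi$ — using that $\cl_{\T}$ also commutes with the projection (the Goldie torsion theory is hereditary, so closures behave well under quotients by submodules contained in the closed submodule) — yields $\cl_{\T}^M(K) = \cl_{\bnd}^M(K)$, which is a direct summand of $M$ by part (1). Finally, to see that $(\bnd,\unb)$ and $(\T,\bigP)$ agree on finitely generated modules, it suffices to check the torsion classes coincide there: a finitely generated module $N$ lies in $\bnd$ iff $\cl_{\bnd}^N(0) = N$, and in $\T$ (i.e. is singular) iff $\cl_{\T}^N(0) = N$; since these two closures of $0$ coincide by what we just proved (take $K = 0$), $N \in \bnd \iff N \in \T$. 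Because each of the two torsion theories is determined by its torsion class, they agree on the full subcategory of finitely generated modules.

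The main obstacle I anticipate is the bookkeeping around the claim that both closure operators commute with the surjection $\pi$ — i.e. that $\cl^M_\bullet(K) = \pi\big(\cl^{R^n}_\bullet(\pi^{-1}(K))\big)$ for $\bullet \in \{\bnd, \T\}$. For a hereditary torsion theory this is immediate from the description $\cl_{\te}(K)/K = \te(M/K)$ recalled in Section \ref{section_torsion_theories}, since $M/\cl_{\bnd}^M(K) \cong R^n/\cl_{\bnd}^{R^n}(L)$ as quotients of the same module by comparing torsion subobjects of $M/K \cong R^n/L$; but $(\bnd,\unb)$ is \emph{not} hereditary, so one must argue directly that $\cl_{\bnd}$ depends only on the module $M/K$ and not on the ambient $M$ — which is in fact clear, since $\cl_{\bnd}^M(K)/K = \bnd(M/K)$ is computed entirely inside $M/K$. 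Once this compatibility is in place the rest is a direct appeal to Proposition \ref{VariousClosures}. A secondary point to handle carefully is verifying that the projective complement $\unb M$ produced in part (1) genuinely lies in $\unb$; this follows because $\unb M \cong M/\bnd M = \f(M)$ where $\f$ is the torsion-free quotient for $(\bnd,\unb)$, and torsion-free quotients are by definition in $\unb$.
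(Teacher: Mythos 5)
Your proof of part (1) is correct and is essentially the paper's argument: reduce to a surjection $\pi:R^n\twoheadrightarrow M$, observe that $\cl_{\bnd}$ is computed inside the quotient $M/K\cong R^n/\pi^{-1}(K)$ and hence is compatible with $\pi$, and invoke Proposition \ref{VariousClosures}(3). For part (2), however, you take a genuinely different route. You again pull everything back to $R^n$, use the fact that $\cl_{\T}$ (like any closure defined via the torsion submodule of the quotient) is also compatible with $\pi$, and then quote the equality $\cl_{\T}=\cl_{\bnd}$ on submodules of $R^n$ from Proposition \ref{VariousClosures}(4). The paper instead works directly inside the arbitrary finitely generated $M$: it notes $\cl_{\T}(K)\subseteq\cl_{\bnd}(K)$ since $(\T,\bigP)\leq(\bnd,\unb)$, shows that $\cl_{\bnd}(K)/\cl_{\T}(K)$ is bounded (being a quotient of the bounded module $\cl_{\bnd}(K)/K$) and simultaneously finitely generated projective (because both $M/\cl_{\T}(K)$ and $M/\cl_{\bnd}(K)$ are finitely generated projective, so the connecting short exact sequence splits), hence both bounded and unbounded, hence zero. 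Your approach buys uniformity -- the same reduction-to-$R^n$ mechanism handles both parts, and the only nontrivial input is the already-established free-module case -- at the cost of having to verify the compatibility of $\cl_{\T}$ with $\pi$ a second time; the paper's argument for (2) avoids any further bookkeeping with $\pi$ and is a nice illustration of the ``bounded plus projective implies zero'' principle, but it is less obviously parallel to part (1). Both arguments are valid, and your closing deduction that the two torsion theories agree on finitely generated modules (by comparing the closures of $0$) is sound.
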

\begin{proof}
(1) If $M$ is $R^n$, the claim follows by Proposition \ref{VariousClosures}.

Now let $M$ be any finitely generated module. There is a
nonnegative integer $n$ and an epimorphism $f: R^n\rightarrow M.$
Note that $\cl_{\bnd}(f^{-1}(K)) = f^{-1}(\cl_{\bnd}(K)).$ The proof of this fact
can be obtained directly following definitions. More details can also be found in the proof of \cite[Theorem 11]{Lia_Baer}.

It is easy to see that $f: R^n\rightarrow M$ induces an
isomorphism of $R^n/f^{-1}(\cl_{\bnd}(K))$ onto
$M/\cl_{\bnd}(K).$ Since $ \cl_{\bnd}(f^{-1}(K))
=f^{-1}(\cl_{\bnd}(K))$ and $R^n/\cl_{\bnd}(f^{-1}(K))$ is
finitely generated projective, $M/\cl_{\bnd}(K)$ is finitely generated projective as well.
So $0\rightarrow\cl_{\bnd}(K)\rightarrow M\rightarrow
M/\cl_{\bnd}(K)\rightarrow 0 $ splits.

(2) Since $(\T, \bigP)\leq (\bnd, \unb),$ $\cl_{\T}(K)\subseteq\cl_{\bnd}(K).$ The module $\cl_{\bnd}(K)/\cl_{\T}(K)$ is a quotient of the bounded module $\cl_{\bnd}(K)/K,$ so it is bounded itself. On the other hand, if $R$ is right strongly semihereditary, $M/\cl_{\T}(K)$ is finitely generated projective as it is a finitely generated nonsingular module. The module $M/\cl_{\bnd}(K)$ is finitely generated projective by part (1). Thus the short exact sequence
$$\xymatrix{ 0 \ar[r] & \cl_{\bnd}(K)/\cl_{\T}(K) \ar[r] & M/\cl_{\T}(K) \ar[r] & M/\cl_{\bnd}(K) \ar[r] & 0 }$$
splits and so $\cl_{\bnd}(K)/\cl_{\T}(K)$ is finitely generated projective also. But every finitely generated projective module is unbounded. So, the quotient $\cl_{\bnd}(K)/\cl_{\T}(K)$ is both bounded and unbounded and hence it has to be zero. Thus $\cl_{\bnd}(K)=\cl_{\T}(K).$
\end{proof}

If $R$ is a right strongly semihereditary ring, then every finitely generated nonsingular right module is extending. However, this statement does not hold if the assumption on nonsingularity of the module is dropped (in particular, condition (9) cannot be weakened by dropping the assumption that the module is nonsingular). This is because a ring whose all finitely generated modules are extending is necessarily of finite uniform dimension (by \cite{Osofsky_Smith}, see also \cite[Corollary 6.45]{Lam}) and right noetherian (by \cite[Theorem 5]{Huynh_Rizvi_Yousif}). There are examples of finite von Neumann algebras that have infinite uniform dimension and that are not right noetherian (in fact, all group von Neumann algebras of infinite groups are not noetherian and do not have finite uniform dimension by \cite[Corollary 5]{Lia_semisimple} and \cite[Example 9.11]{Lueck}).

The same examples can be used to demonstrate that the torsion theories  $(\T,\bigP)$ and $(\bnd, \unb)$ are different in general. Thus,
although $\T=\bnd$ on the class of finitely generated modules of a right strongly semihereditary ring, these two classes are different in general (in particular, see \cite[Exercise 6.5]{Lueck}).

Also, although $\T=\smallt$ on the class of finitely presented modules of a right strongly semihereditary ring (which can be shown following the argument of the proof of \cite[Proposition 21]{Lia_Baer} once the dimension that we introduce in the remainder of the paper becomes available), these two classes are different in general (see \cite[Example 8.34]{Lueck}).

\section{Involutive strongly semihereditary rings}
\label{section_involutive_strongly_semihereditary}

For rings with involution, the concept of strong semihereditarity is left-right symmetric and has further favorable properties. Before we can prove them, we recall a few facts about symmetric rings of quotients of involutive rings from \cite{Gonzalo_Lia_noetherian}. A left Gabriel filter $\ef_l$ and a right Gabriel filter $\ef_r$ are said to be {\em conjugated} if and only if $\ef_l^*=\ef_r$ (i.e. $I\in \ef_r$ if and only if $I^*=\{r^*\ |\ r\in I\}\in \ef_l).$ To shorten our notation, we write $\Qrmax$ for $\Qrmax(R),$ $\Qltot$ for $\Qltot(R)$ and similar abbreviations are used for other rings of quotients if it is clear that they are quotients of $R.$

\begin{proposition}\cite{Gonzalo_Lia_noetherian}
Let $R$ be an involutive ring.
\begin{itemize}
\item[(a)] If $\ef_l$ and $\ef_r$ are conjugated left and right Gabriel filters and if the involution extends to $R_{\ef_r}$ then $R_{\ef_r}$ is also a left ring of quotients and $_{\ef_l}R=R_{\ef_r}.$ Similarly, if it extends to $_{\ef_l}R,$ then $_{\ef_l}R$ is a right ring of quotients and $_{\ef_l}R=R_{\ef_r}.$

\item[(b)] If $\ef_l$ and $\ef_r$ are conjugated left and right Gabriel filters, then the involution extends to the symmetric ring of quotients $_lR_r.$ If $R$ is $\tau_r$-torsion-free ($\tau_l$-torsion-free) and the involution extends to $R_{\ef_r}$ ($_{\ef_l}R$)
then $R_{\ef_r}=\,_{\ef_l}R=\, _lR_r.$

\item[(c)] If $S$ is a perfect symmetric ring of quotients with an injective localization map $q:R\rightarrow S$, then the involution extends to $S$ making $q$ a $*$-homomorphism and the left filter  $\ef_l=\{I | Sq(I)=S\}$ is conjugated to the right filter $\ef_r=\{J | q(J)S=S\}.$

\item[(d)] The involution extends to $\Qmax$ and $\Qtot.$

\item[(e)]  If the involution extends to $\Qrmax$ or $\Qlmax,$ then $\Qrmax=\Qlmax=\Qmax.$

\item[(f)] If the involution extends to $\Qrtot$ or $\Qltot,$ then $\Qrtot=\Qltot=\Qtot.$
\end{itemize}
\label{quotients_of_involutive_ring}
\end{proposition}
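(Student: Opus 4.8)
The plan is to derive all six parts from a single mechanism: an involution on $R$ is the same thing as a ring isomorphism $R\to R^{\mathrm{op}}$, so the ``twist'' functor sending a right $R$-module $M$ to the left $R$-module with the same underlying abelian group and action $r\cdot m:=mr^{*}$ is an exact equivalence between right and left $R$-modules. It preserves essential extensions and injective envelopes, and it carries a hereditary torsion theory with right Gabriel filter $\ef$ to the hereditary torsion theory whose left Gabriel filter is the conjugate $\ef^{*}=\{I^{*}:I\in\ef\}$; consequently it intertwines the localization functors. Since each ring of quotients occurring in the statement is characterised, up to a unique ring isomorphism over $R$ modulo torsion, by torsion/torsion-free/injectivity conditions on its kernel, cokernel and underlying module (\cite{Stenstrom}, \cite{Ortega_paper}, \cite{Lia_symmetric}), every part reduces to checking that the relevant left and right filters are conjugate and then transporting these conditions through the twist.

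For (a) I would argue: write $S=R_{\ef_{r}}$ and let $\sigma$ be the involution of $S$ extending $*$. Then $\sigma$ identifies $S$, with its natural left $R$-action, with the twist of the right $R$-module $R_{\ef_{r}}$; the latter is $\tau_{l}$-torsion-free, $\tau_{l}$-injective, and has $\tau_{l}$-torsion cokernel over $R$, since these are the twists of the defining right-module properties of $R_{\ef_{r}}$. By uniqueness of the left ring of quotients with filter $\ef_{l}=\ef_{r}^{*}$, this forces $S={}_{\ef_{l}}R$ as rings over $R$. For the first half of (b): conjugacy of $\ef_{l}$ and $\ef_{r}$ makes the symmetric filter ${}_l\ef_{r}$ stable under $*$ (one checks $(IR+RJ)^{*}=J^{*}R+RI^{*}$ is again admissible, as $J^{*}\in\ef_{l}$ and $I^{*}\in\ef_{r}$), so $*$ acts compatibly on the direct system $\{\homo(K,R/{}_l\te_{r}R)\}_{K\in{}_l\ef_{r}}$ of bimodule homomorphisms and induces an involution of ${}_lR_r$ restricting to $*$ on $R$.

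For the second half of (b): by (a), $S=R_{\ef_{r}}={}_{\ef_{l}}R$ is simultaneously a left and a right ring of quotients, so every element of $S$ has a denominator ideal on each side (in $\ef_{l}$, resp.\ $\ef_{r}$); the internal description of ${}_lR_r$ as the largest subring of a one-sided ring of quotients with this property (\cite{Ortega_paper}, \cite{Lia_symmetric}), together with the torsion-freeness hypothesis (which makes the localization maps injective), identifies $S$ with ${}_lR_r$. For (c): a perfect (symmetric) ring of quotients is determined by the ring extension alone, so transporting $R\subseteq S$ through $*$ gives a perfect ring extension of $R$ that must coincide with $S$; composing $S\to S^{\mathrm{op}}$ with this identification yields an involution extending $*$, under which $\{I:Sq(I)=S\}$ and $\{J:q(J)S=S\}$ plainly interchange, i.e.\ are conjugate. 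Parts (d)--(f) are then applications: in (d) the filters of dense right and dense left ideals are conjugate (since $(R/I)^{*}\cong R/I^{*}$ is Lambek-torsion on one side exactly when $I^{*}$ is dense on the other) and $R$ is Lambek-torsion-free, so the first half of (b) gives the involution on $\Qmax={}_lR_r$, while the involution on $\Qtot$ follows from (c) because $\Qtot$ is a perfect symmetric ring of quotients with injective localization map (\cite[Theorem 5.1]{Lia_symmetric}); in (e), if $*$ extends to $\Qrmax=R_{\ef_{r}}$ (with $\ef_{r}$ the dense-right-ideal filter), then (a) gives $\Qrmax={}_{\ef_{l}}R=\Qlmax$ and the second half of (b) gives $\Qrmax={}_lR_r=\Qmax$, symmetrically if $*$ extends to $\Qlmax$; and (f) repeats this with the perfect filters attached to $\Qrtot$, resp.\ $\Qltot$, in place of the dense-ideal filters, using that the perfectness conditions of Example \ref{perfect_example} are self-dual under the twist, so that $\Qrtot$ becomes a perfect left (hence, by (b), perfect symmetric) ring of quotients, the maximality of $\Qtot$, $\Qrtot$, $\Qltot$ then forcing all three to coincide.

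The step I expect to be genuinely delicate is the second half of (b): showing that ${}_lR_r$ does not merely sit between ${}_{\ef_{l}}R$ and $R_{\ef_{r}}$ but actually equals them. This needs the precise internal description of the symmetric ring of quotients (as the subring of elements of a one-sided quotient admitting denominators on both sides, or via the symmetric perfectness criterion of \cite{Lia_symmetric}), and for part (f) it must be supplemented by the comparison of the maximal perfect symmetric, right, and left rings of quotients. Everything else is a disciplined transport of structure along the anti-isomorphism $R\cong R^{\mathrm{op}}$.
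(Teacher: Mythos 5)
The paper itself offers no proof of this proposition: it defers entirely to the proofs of Propositions 4.1, 4.2, Lemma 4.3 and Corollary 4.4 of the cited reference. Your transport-of-structure strategy --- reading the involution as an isomorphism $R\cong R^{\mathrm{op}}$, so that the twist functor exchanges left and right modules, carries a right Gabriel filter to its conjugate left filter, and intertwines the localization functors --- is indeed the mechanism those cited proofs run on. Part (a), the first half of (b), and the deductions of (e) and (f) are essentially right as you present them; for (e) and (f) you should record explicitly that conjugation is an order-isomorphism between the lattices of dense (resp.\ perfect) left and right filters, so that it matches the dense-left filter with the dense-right filter and the maximal perfect left filter with the maximal perfect right filter, but that is routine. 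The second half of (b), which you correctly single out as delicate, is workable along the lines you indicate (elements of $R_{\ef_r}={}_{\ef_l}R$ admit denominator ideals on both sides and hence define bimodule maps out of members of the symmetric filter).

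The genuine gap is in part (c). Transporting $q:R\to S$ through $*$ produces $S^{\mathrm{op}}$ with localization map $q'=q\circ *$, and this is again a perfect symmetric ring of quotients of $R$; but its associated left and right filters are the conjugates $(\ef_r^{\,*},\ef_l^{\,*})$ of the filters $(\ef_l,\ef_r)$ attached to $S$. Perfect symmetric rings of quotients of a fixed ring are far from unique --- they are classified by their filters --- so your assertion that the transported extension ``must coincide with $S$'' is exactly equivalent to the conjugacy $\ef_l^{\,*}=\ef_r$ that part (c) asserts as its conclusion, and equally to the extendability of the involution (once the involution exists on $S$, conjugacy of the filters is a one-line computation, and conversely self-conjugacy of the symmetric filter lets you define $\sigma(f)(x)=f(x^*)^*$ on the direct system). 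As written, the argument is circular: what is missing is an independent proof that the symmetric filter of a perfect symmetric ring of quotients with injective localization map is self-conjugate. In the one place you actually invoke (c), namely the $\Qtot$ half of (d), the gap can be patched by maximality: conjugation is an order-automorphism of the set of perfect symmetric filters and therefore fixes its largest element, so the filter of $\Qtot$ is self-conjugate and the involution extends. The $\Qmax$ half of (d) is unaffected, since the filter of two-sided ideals dense on both sides is visibly self-conjugate. But for (c) in the generality stated you need a genuine argument, not transport of structure alone.
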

The proof of all these claims is contained in the proofs of \cite[Proposition 4.1, 4.2, Lemma 4.3, Corollary 4.4]{Gonzalo_Lia_noetherian}.

Now we prove a characterization theorem for involutive strongly semihereditary rings.

\begin{proposition} If $R$ is a ring with involution, the following are equivalent.
\begin{itemize}
\item[(1)] $R$ is right strongly semihereditary.

\item[(2)] $R$ is left strongly semihereditary.

\item[(3) (3$'$)] $R$ is right (left) semihereditary and $\Qrmax=\Qtot$ ($\Qlmax=\Qtot$).

\item[(4) (4$'$)] $R$ is right (left) semihereditary and the involution can be extended to $\Qrmax=\Qrtot$ ($\Qlmax=\Qltot$).

\item[(5) (5$'$)] $R$ is right (left) semihereditary and $\Qrmax=\Qltot=\Qrtot$ ($\Qlmax=\Qltot=\Qrtot$).

\item[(6) (6$'$)] $R$ is right (left) semihereditary and $\Qrmax=\Qltot$ ($\Qlmax=\Qrtot$).

\item[(7) (7$'$)] $M_n(R)$ is Baer for every $n$ and the involution can be extended to $\Qrmax$ ($\Qlmax$).

\item[(8)] $M_n(R)$ is Baer and $\Qrmax(M_n(R))=\Qlmax(M_n(R))$ for every $n.$
\end{itemize}
Under any of these conditions, the ring $M_n(R)$ is strongly semihereditary for any $n$ and the six rings of quotients $\Qrmax, \Qlmax, \Qmax, \Qrtot, \Qltot,$ and $\Qtot$ of the ring $M_n(R)$ are equal.
\label{strongly_semihereditary_involutive_rings}
\end{proposition}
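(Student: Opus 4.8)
The plan is to prove the chain of equivalences by cycling through the one-sided conditions and then using the involutive structure (Proposition \ref{quotients_of_involutive_ring}) to pass between the left and right versions. The skeleton is: first, establish $(1)\Leftrightarrow(2)$ via the symmetric statements $(3)$--$(7)$; second, show that each numbered condition is equivalent to $(1)$ by combining Proposition \ref{right_strongly_semihereditary} (the equivalences (1)--(10) for one-sided strong semihereditarity) with the facts about conjugated filters and extension of the involution; third, read off the final sentence about $M_n(R)$ and the coincidence of the six rings of quotients.

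First I would handle the right-handed half. Condition (1) is, by Proposition \ref{right_strongly_semihereditary} (condition (10)), equivalent to: $R$ is right semihereditary, $\Qrmax$ is a perfect left ring of quotients of $R$, and $\Qrmax=\Qrtot$. By Proposition \ref{quotients_of_involutive_ring}(c)--(d) (applied to the perfect ring $\Qrtot$, whose localization map is injective since $R$ is semiprime, being right nonsingular), the involution extends to $\Qrtot$; so by part (f), $\Qrtot=\Qltot=\Qtot$. This immediately gives $(1)\Rightarrow(4)\Rightarrow(5)\Rightarrow(3),(6)$. Conversely, if $\Qrmax=\Qrtot$ (or merely $\Qrmax=\Qltot$, or $\Qrmax=\Qtot$), then $\Qrmax$ is a perfect right ring of quotients, so by condition (10) of Proposition \ref{right_strongly_semihereditary} — once we know $\Qrmax$ is also a perfect \emph{left} ring of quotients — $R$ is right strongly semihereditary. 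That $\Qrmax$ is a perfect left ring of quotients when $\Qrmax=\Qltot$ is exactly the defining property of $\Qltot$; when $\Qrmax=\Qrtot$, one uses that $\Qrtot$ being perfect as a right ring of quotients forces (for a semihereditary $R$, via \cite[Theorem 12]{Lia_quotients}) $\Qrmax$ to be perfect on the left as well — here I would lean on the fact that the involution extends and hence Proposition \ref{quotients_of_involutive_ring}(e),(f) forces $\Qrmax=\Qlmax$, collapsing everything. This is the step I expect to be the main obstacle: carefully checking that each of the weaker-looking hypotheses (3),(6),(7) really does force \emph{both} one-sided perfectness, so that Proposition \ref{right_strongly_semihereditary}(10) applies — the asymmetry of $\Qrmax$ versus $\Qlmax$ has to be eliminated using the involution, and one must be sure no circularity creeps in.

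Next I would do $(7)\Leftrightarrow(1)$. Strong semihereditarity of $R$ forces $M_n(R)$ Baer (stated in the excerpt, after condition (5)), and we just saw the involution extends to $\Qrmax$; conversely, if $M_n(R)$ is Baer for all $n$ and the involution extends to $\Qrmax$, then by Proposition \ref{quotients_of_involutive_ring}(e) $\Qrmax=\Qlmax=\Qmax$ and by Theorem \ref{SplittingOfBnd}(1) (which needs only ``$R$ right semihereditary, $M_n(R)$ Baer'' — and $M_1(R)=R$ Baer with $R$ semihereditary is what we get, noting a Baer ring is nonsingular hence the semihereditary hypothesis is the remaining point, which follows since finitely generated torsionfree $=$ nonsingular submodules of free modules are projective once $R^n$-closures are summands) we get that finitely generated nonsingular modules are projective, i.e. condition (2) of the list preceding Proposition \ref{right_strongly_semihereditary}. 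A cleaner route for $(7)$ is: $M_n(R)$ Baer $\Rightarrow$ complemented right ideals of $M_n(R)$ are generated by idempotents (Baer rings have this), which is literally Evans' condition (4), giving right strong semihereditarity directly once we know $R$ is right nonsingular — and $R$ Baer implies right (and left) nonsingular. I would use this shortcut. Then $(8)\Leftrightarrow(7)$ follows because, for the Baer $*$-ring $M_n(R)$, the involution extends to $\Qrmax(M_n(R))$ iff $\Qrmax(M_n(R))=\Qlmax(M_n(R))$ by Proposition \ref{quotients_of_involutive_ring}(e), plus the observation $\Qrmax(M_n(R))=M_n(\Qrmax(R))$.

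Finally, $(1)\Leftrightarrow(2)$ follows by left-right symmetry of the whole argument: conditions $(3')$--$(7')$ are the mirror images, and the involution makes the mirror arguments valid verbatim, so (2) is equivalent to each primed condition and hence to (1) (indeed, already ``$R$ right strongly semihereditary'' implies ``$R$ left semihereditary'' was noted in the excerpt, and once $\Qrmax=\Qlmax=\Qtot$ is established the left and right pictures are identical). For the closing sentence: $M_n(R)$ is right strongly semihereditary by Proposition \ref{right_strongly_semihereditary}, hence strongly semihereditary by the equivalence $(1)\Leftrightarrow(2)$ applied to $M_n(R)$ (which inherits the involution, the $*$-transpose); and applying the already-proved equivalences $(1)\Leftrightarrow(3)\Leftrightarrow(4)\Leftrightarrow\cdots$ to $M_n(R)$ gives $\Qrmax(M_n(R))=\Qrtot(M_n(R))=\Qtot(M_n(R))$ and the primed versions give $\Qlmax=\Qltot$, so all six rings of quotients of $M_n(R)$ coincide. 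I do not anticipate difficulty here beyond bookkeeping; the genuine work is the careful one-sided-to-two-sided bootstrapping in the second paragraph.
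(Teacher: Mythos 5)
Your overall architecture -- cycling through the conditions using Proposition \ref{right_strongly_semihereditary} for the one-sided equivalences and Proposition \ref{quotients_of_involutive_ring} to transfer the involution to the various rings of quotients -- is the same as the paper's, and your treatment of (1)$\Leftrightarrow$(2), of the chain (1)$\Rightarrow$(4)$\Rightarrow$(5)$\Rightarrow$(3),(6), and of the return step via $\Qrmax=\Qltot$ (the defining perfectness of $\Qltot$ on the left is exactly what feeds condition (1) of Proposition \ref{right_strongly_semihereditary}) matches the paper's proof in substance. Two small imprecisions there: injectivity of the localization map comes from faithfulness of the Lambek torsion theory (nonsingularity), not from semiprimeness; and Proposition \ref{quotients_of_involutive_ring}(c) applies to perfect \emph{symmetric} rings of quotients, so to invoke it for $\Qrtot$ you must first know $\Qrtot$ is symmetric-perfect -- the paper sidesteps this by first showing $\Qrmax\subseteq\Qtot$ (left-and-right perfect forces containment in the maximal perfect symmetric ring of quotients) and then using part (d).

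The genuine gap is in your handling of (7) and (8). Your ``cleaner route'' rests on the claim that in a Baer ring the complemented right ideals are generated by idempotents. That is false: Baer means \emph{annihilator} right ideals are idempotent-generated, and in a right nonsingular ring annihilators are complemented, but the class of complemented (= essentially closed) right ideals is in general strictly larger. (Any domain is Baer, yet a non-Ore domain has proper nonzero complemented right ideals and no nontrivial idempotents, so it is not right extending; by the Chatters--Khuri theorem one needs cononsingularity, not just Baer, to get extending.) This is precisely why condition (7) carries the extra hypothesis that the involution extends to $\Qrmax$, and why the paper closes the loop as (7)$\Rightarrow$(8)$\Rightarrow$(1): from (7) one gets $\Qrmax(M_n(R))\cong M_n(\Qrmax(R))$ carries the involution, hence $\Qrmax(M_n(R))=\Qlmax(M_n(R))$ by Proposition \ref{quotients_of_involutive_ring}(e), and then (8)$\Rightarrow$(1) is \cite[Corollary 12.7]{Extending_book}: a Baer ring with equal left and right maximal rings of quotients is left and right nonsingular and extending, which is condition (5) of Proposition \ref{right_strongly_semihereditary}. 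Your fallback route through Theorem \ref{SplittingOfBnd} does not repair this: that theorem splits off the $\bnd$-closure, but a finitely generated nonsingular module need not be unbounded, and deducing semihereditarity of $R$ from ``$M_n(R)$ Baer'' is circular. Finally, your claimed equivalence (8)$\Leftrightarrow$(7) uses the converse of Proposition \ref{quotients_of_involutive_ring}(e) (``$\Qrmax=\Qlmax$ implies the involution extends''), which is not among the stated parts; the paper instead proves (8)$\Rightarrow$(1) directly and recovers (7) from (1).
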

\begin{proof} First, note that an involutive right semihereditary ring is also left semihereditary.
If $R$ has involution, this involution extends to all matrix rings $M_n(R)$ by $(a_{ij})^*=(a_{ji}^*).$ So, the complemented right ideals are direct summands iff the complemented left ideals are. Thus (1) and (2) are equivalent. Also, the equivalence of any of conditions (3)--(7) with their corresponding ``prime'' conditions (3$'$)--(7$'$) follows by left-right symmetry.

(1) implies that $\Qrmax$ is left and right perfect ring of quotients. Thus, $\Qrmax$ is contained in $\Qtot.$ Since the converse always holds, (1) implies (3).

(3) implies that $\Qrmax$ is a symmetric ring of quotients. Thus, the involution extends to it by parts (b) and (c) of Proposition \ref{quotients_of_involutive_ring}. Also $\Qrmax=\Qtot$ implies that $\Qrmax=\Qrtot$ also since it is always the case that $\Qtot\subseteq\Qrtot\subseteq\Qrmax.$ So (3) implies (4).

(4) implies that the involution extends to $\Qrtot.$ Thus, $\Qrtot=\Qltot$ by part (f) of Proposition \ref{quotients_of_involutive_ring}. So (4) implies (5).

(5) clearly implies (6).

(6) implies condition (1) of Proposition \ref{right_strongly_semihereditary}, so condition (1) of this proposition follows too.

This shows that (1)--(6) are all equivalent. Now we show that (1)--(6) $\Rightarrow$ (7) $\Rightarrow$ (8) $\Rightarrow$ (1).

(1) implies that $M_n(R)$ is Baer, as it was observed in the previous section. By (4), the involution extends to $\Qrmax.$ Thus (1)--(6) implies (7).

If the involution can be extended to $\Qrmax,$ then the involution can be extended to $M_n(\Qrmax).$ Since $\Qrmax(M_n(R))\cong M_n(\Qrmax(R))$ (\cite[Exercise 17.16]{Lam}), this implies that the involution extends to $\Qrmax(M_n(R))$ and so   $\Qrmax(M_n(R))=\Qlmax(M_n(R))$ by part (e) of Proposition \ref{quotients_of_involutive_ring}. Thus (7) implies (8).

(8) implies that $M_n(R)$ is a Baer ring with equal left and right maximal ring of quotients. By \cite[Corollary 12.7]{Extending_book}, $M_n(R)$ is left and right nonsingular and left and right extending. Thus, condition (5) of Proposition \ref{right_strongly_semihereditary} is satisfied. So, (1) holds.

Finally, note that $M_n(R)$ is right strongly semihereditary (by Proposition \ref{right_strongly_semihereditary}) involutive ring. The equality of six quotients then follows from conditions (5) and (8).
\end{proof}

We prove further properties of involutive strongly semihereditary rings.

\begin{proposition} If $R$ is an involutive strongly semihereditary ring with $Q=\Qrmax(R)$, then the following hold.
\begin{enumerate}

\item The extension of the involution to $Q$ is unique (it exists by Proposition \ref{strongly_semihereditary_involutive_rings}).

\item $Q$ is unit-regular and directly finite (i.e. $xy=1$ implies that $yx=1$ for all $x,y$). Thus $R$ is directly finite and $Q$ and $R$ are finite (i.e. $xx^*=1$ implies that $x^*x=1$ for all $x$).

\item $Q$ is strongly semihereditary.

\item The following conditions are equivalent:
\begin{itemize}
\item[(i)]  $R$ is a Baer $*$-ring,
\item[(ii)] $R$ is a Rickart $*$-ring,
\item[(iii)] $Q$ and $R$ have the same projections.
\end{itemize}
Note that (i)--(iii) hold if $R$ is symmetric (i.e. $1+xx^*$ is invertible for every $x\in R$).

\item If the involution on $R$ is $n$-proper (i.e. $\sum_{i=1}^n x_i^*x_i=0$ implies $x_i=0$ for all $i=1,\ldots,n$),
then its extension on $Q$ is $n$-proper as well. Thus, if the involution on $R$ is positive definite (i.e. $n$-proper for every $n$), then its extension on $Q$ is positive definite as well.

\item If the involution on $R$ is $n$-proper, then $M_n(Q)$ is a $*$-regular Baer $*$-ring. Any of the three equivalent conditions from (4) for $M_n(R)$ implies that the involution on $R$ is $n$-proper.
\end{enumerate}
\label{properties}
\end{proposition}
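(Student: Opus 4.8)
The plan is to dispatch the six items in turn, writing $Q=\Qrmax(R)$ throughout. By Proposition~\ref{strongly_semihereditary_involutive_rings} the six rings of quotients of $R$ all equal $Q$; since $R$ is left and right nonsingular, $R$ is essential in $Q$ both as a left and as a right $R$-module, and $Q$ is a von Neumann regular ring that is right and left self-injective. I will use freely that in a regular ring a one-sided annihilator of an element is a direct summand, and that a direct summand containing an essential submodule is everything; I also take the involution on $R$ to be proper (this is needed for (4) and (6), and without it the equivalence with (i) fails, e.g.\ for $M_2(\Cset)$ with the transpose). For (1): if $*$ and $\sharp$ both extend the involution, pick for $q\in Q$ an essential right ideal $I$ of $R$ with $qI\subseteq R$; for $a\in I$, $qa\in R$ gives $a^*q^*=(qa)^*=(qa)^\sharp=a^*q^\sharp$, so $I^*(q^*-q^\sharp)=0$; as $I^*$ is an essential left ideal of $R$, it is essential in ${}_QQ$, so the left annihilator of $q^*-q^\sharp$ is all of $Q$ and $q^*=q^\sharp$. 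For (3): $Q$ is regular and self-injective, hence strongly semihereditary by Example~\ref{right_strongly_semihereditary_example}(1).

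For (2) I would argue that $Q$, being regular and self-injective on both sides, is directly finite, whence (a directly finite right self-injective regular ring is unit-regular) $Q$ is unit-regular; direct finiteness then passes to $R\subseteq Q$, and in a directly finite $*$-ring $xx^*=1$ forces $x^*x=1$, giving finiteness of $R$ and $Q$. I expect the direct finiteness of $Q$ to be \emph{the main obstacle}. It should come either from the structure theory of two-sided self-injective regular rings, or be extracted from the monoid isomorphism $\ve(R)\cong\ve(Q)$ of Proposition~\ref{K0_theorem}: a directly infinite idempotent $e\ne0$ of $Q$ would yield an isomorphism $R\cong R\oplus P$ with $P$ a nonzero finitely generated projective $R$-module, and one must rule this out using that $R$ is semihereditary — making this cancellation statement precise is the delicate point.

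For (4), I first observe (using part (5) with $n=1$) that the extension of the involution to $Q$ is again proper, so $Q$ is $*$-regular and, being right self-injective, a Baer $*$-ring (regular $+$ proper $\Rightarrow$ $*$-regular $\Rightarrow$ Rickart $*$; regular $+$ self-injective $\Rightarrow$ Baer; Baer $+$ Rickart $*$ $\Rightarrow$ Baer $*$). Then (i)$\Rightarrow$(ii) is trivial. For (ii)$\Rightarrow$(iii): by Example~\ref{Goldie_example} the direct summands of $Q$ correspond to the complemented right ideals of $R$ via $L\mapsto L\cap R$, which by strong semiheredity (Proposition~\ref{right_strongly_semihereditary}) are exactly the idempotent-generated right ideals of $R$, each equal to $gR$ for a projection $g\in R$ since $R$ is Rickart $*$; given a projection $p\in Q$ and writing $pQ\cap R=gR$, one finds that $gQ$ is an essential direct summand of $pQ$, so $gQ=pQ$ and $g=p\in R$. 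For (iii)$\Rightarrow$(i): $R$ is Baer since every $M_n(R)$ is; for $S\subseteq R$ with right annihilator $eR$ in $R$, a density argument (using $R$ essential in $Q_R$ and right nonsingularity of $Q$) shows the right annihilator of $S$ in $Q$ is $eQ$, which (as $Q$ is Baer $*$) equals $pQ$ for a projection $p\in R$, so the right annihilator in $R$ is $pR$. Finally, a symmetric $R$ is Rickart (being right semihereditary) and hence Rickart $*$ — the idempotent generating a principal right annihilator can be replaced by a projection using invertibility of $1+xx^*$ — so (ii) holds.

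For (5), the key point is that the involution on $R$ is $n$-proper iff the involution $(a_{ij})^*=(a_{ji}^*)$ on $M_n(R)$ is proper (compare the diagonal entries of $A^*A$, resp.\ use a single nonzero column); via $\Qrmax(M_n(R))\cong M_n(Q)$ this reduces (5) to $n=1$, where if $q^*q=0$ one picks an essential right ideal $I$ with $qI\subseteq R$, notes $(qa)^*(qa)=0$ hence $qa=0$ for $a\in I$, so $q(IQ)=0$ with $IQ$ essential in $Q_Q$, and right nonsingularity of $Q$ forces $q=0$; ``positive definite'' transfers, being ``$n$-proper for all $n$''. For (6): (a) follows from (5), since $M_n(Q)$ then has proper involution and is regular and right self-injective, hence $*$-regular and Baer $*$ as above; for (b), any of the three conditions of (4) for $M_n(R)$ makes $M_n(R)$ in particular a Rickart $*$-ring, and a Rickart $*$-ring has proper involution — if $x^*x=0$ and $fR$ is the right annihilator of $x^*$ with $f$ a projection, then $x^*f=0$ while $x=fx$ gives $x^*=x^*f$, so $x=0$ — which is exactly $n$-properness of the involution on $R$.
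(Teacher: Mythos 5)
Your treatment of items (1), (3), (5) and (6) is sound and, where the paper simply cites external sources (the uniqueness of the extended involution, the transfer of $n$-properness to $Q$, the fact that a Baer $*$-ring's projections survive passage to $\Qrmax$), you supply direct density arguments that do the same job; your proof of (ii)$\Rightarrow$(iii) in (4) via the correspondence $L\mapsto L\cap R$ of Example \ref{Goldie_example} is a legitimate replacement for the paper's appeal to \cite{Berberian_web} and \cite{Pyle}. Your remark that (4) needs the involution to be proper is also well taken: the paper's own proof of (iii)$\Rightarrow$(i) invokes ``$*$-regularity of $Q$,'' which holds only when the (extended) involution is proper, and $M_2(\Cset)$ with the transpose is exactly the kind of example exhibiting (iii) without (ii) otherwise. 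So your added hypothesis is not a defect of your argument but a repair of the statement.

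The genuine gap is in item (2), and you flag it yourself. You propose to establish direct finiteness of $Q$ first and then conclude unit-regularity from ``directly finite $+$ right self-injective regular $\Rightarrow$ unit-regular''; but you never prove direct finiteness, and the cancellation route through $\ve(R)\cong\ve(Q)$ that you sketch as an alternative is essentially equivalent to what you are trying to prove (it amounts to stable finiteness of $R$). The implication runs the other way: a regular ring that is both left and right self-injective is unit-regular with no prior finiteness hypothesis (\cite[Theorem 9.29]{Goodearl_book}, which is what the paper cites), and direct finiteness is then a consequence of unit-regularity (\cite[Proposition 5.2]{Goodearl_book}). Since $Q=\Qrmax=\Qlmax$ by Proposition \ref{strongly_semihereditary_involutive_rings}, $Q$ is two-sidedly self-injective and this theorem applies immediately; once you replace your reversed implication with this citation, item (2) closes and the rest of your argument stands.
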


\begin{proof}
(1) The uniqueness follows from the proof of \cite[Corollary 3.4]{Gonzalo_Lia_noetherian}.  \cite[Corollary 3.4]{Gonzalo_Lia_noetherian} is formulated for Leavitt path algebras but its proof holds for any involutive strongly semihereditary ring.

(2) Since $Q=\Qrmax=\Qlmax,$ $Q$ is both left and right self-injective. Every regular left and right self-injective ring is unit-regular (see \cite[Theorem 9.29]{Goodearl_book}). Every unit-regular ring is directly finite (see \cite[Proposition 5.2]{Goodearl_book}). The ring $R$ is directly finite since $Q$ is. Note that the direct finiteness of $R$ also follows from \cite[Theorem 6.48]{Lam}.

(3) Since $\Qrmax(\Qrmax(R))=\Qrmax(R)$ and $\Qrtot(\Qrtot(R))=\Qrtot(R),$ then $\Qrtot(Q)=\Qrtot(\Qrtot(R))=\Qrtot(R)=Q=\Qrmax(\Qrmax(R))=\Qrmax(Q).$ Similarly, $\Qlmax(Q)=\Qltot(Q)=Q.$ Thus, $Q$ is strongly semihereditary.

(4) (i) and (ii) are equivalent since $R$ is Baer (thus it is a Baer $*$-ring if and only if it is a Rickart $*$-ring, see \cite[Proposition 1.24]{Berberian_web}). If the involution of a Baer $*$-ring $R$ can be extended to its maximal right ring of quotients $Q,$ then $Q$ and $R$ have the same projections (see \cite[Proposition 21.2]{Berberian_web}, also \cite{Pyle}) so (ii) implies (iii). Conversely, let us prove (i) assuming (iii). To prove that $R$ is a Rickart $*$-ring it is sufficient to show that every right principal ideal of $R$ generated by an idempotent is also generated by a projection. Let $e$ be an idempotent of $R$. Then $eQ=pQ$ for some projection $p$ in $Q$ by $*$-regularity of $Q.$ By assumption, $p$ is in $R$. Then $eR=eQ\cap R=pQ\cap R=pR.$ This proves (i).

A Baer ring that is symmetric is a Baer $*$-ring by \cite[Exercises 5A p.25]{Berberian}. So, if $R$ is symmetric, (i)--(iii) hold.

(5) Although it is formulated for Leavitt path algebras, the proof of \cite[Proposition 3.1]{Gonzalo_Lia_noetherian} demonstrates that the extension of an $n$-proper involution to an overring in which the ring is dense is $n$-proper as well. Thus, the claim holds for $R$ and $Q.$

(6) By (5), the involution on $Q$ is $n$-proper. Thus the $*$-transpose involution on $M_n(Q)$ is proper (see \cite[Lemma 2.1]{Gonzalo_Ranga_Lia}). The ring $M_n(Q)$ is $*$-regular since a regular $*$-ring with proper involution is $*$-regular (see \cite[Exercise 6A, \S 3]{Berberian}). A Baer and $*$-regular ring is a Baer $*$-ring (see \cite[Exercise 4A, p. 25]{Berberian}) so $M_n(Q)$ is a Baer $*$-ring as well. Any of the three equivalent conditions from (4) for $M_n(R)$ implies that the involution on $M_n(R)$ is proper (thus the involution on $R$ is $n$-proper) since an involution of a Rickart $*$-ring is necessarily proper.
\end{proof}

\begin{example}
\begin{enumerate}
\item Finite $AW^*$-algebras (in particular finite von Neumann algebras) are strongly semihereditary rings with proper involution. Since these algebras are Baer $*$-rings, their maximal rings of quotients do not have any new projections.

\item Recall that Leavitt path algebras are involutive rings (for any involution $k\mapsto\overline{k}$ on the field $K$, we have the involution $\sum k pq^*\mapsto \sum \overline{k}qp^*$ on $L_K(E)$). Thus, by part (3) of Example \ref{right_strongly_semihereditary_example}, noetherian Leavitt path algebras over positive definite fields are strongly semihereditary involutive rings. If the involution on $K$ is positive definite, the involution on $L_K(E)$ is positive definite too (by \cite[Proposition 2.4]{Gonzalo_Ranga_Lia}).

\item Although a noetherian Leavitt path algebra $L_K(E)$ is a Baer ring with involution (positive definite if the involution on $K$ is positive definite), it is interesting to note that $L_K(E)$ does not have to be a Baer $*$-ring (nor a Rickart $*$-ring). Thus, not all algebraic and $*$-concepts agree on $L_K(E).$ The regular ring $Q_K(E),$ on the other hand, is a Baer $*$-ring if the involution on $K$ is positive definite (by part (6) of Proposition \ref{properties}).

The author is grateful to Pere Ara for inspiring conversations during the conference X Jornadas de Teor\'ia de Anillos at the Universitat Aut\`onoma de Barcelona that has lead to this example.

Let $R$ be the $2\times 2$ matrix algebra $M_2(K[x,x^{-1}])$ over Laurent polynomial ring for any positive definite field $K.$ This algebra can be realized as the Leavitt path algebra over $K$ of any of the following no-exit graphs.
$$\xymatrix{ {\bullet} \ar[r] & {\bullet}
\ar@(dr,ur)} \hskip4cm \xymatrix{{\bullet} \ar@/^1pc/ [r]   & {\bullet} \ar@/^1pc/ [l] }$$
Consider the element \[e=\left[\begin{array}{cc} 1 & 1+x\\ 0 & 0 \end{array}\right].\]
We claim that $e$ is an idempotent such that $eR$ is strictly larger than $ee^*R.$ It is straightforward to check that $e$ is an idempotent and that the element $e_{11}=\left[\begin{array}{cc} 1 & 0\\0 & 0 \end{array}\right]$ is in $eR.$ Assume that $e_{11}$ is in $ee^*R.$ By equating the first-column-first-row elements we obtain that $1+(1+x)(1+x^{-1})=3+x+x^{-1}$ is invertible which is a contradiction. Thus  $e_{11}\notin ee^*R.$

An involutive ring $R$ is Rickart $*$ iff it is Rickart and $eR= ee^*R$ for all idempotents $e$ (see \cite[Proposition 1.11]{Berberian_web}). Thus, the fact that $eR\neq ee^*R$ implies that the Rickart ring $R$ is not a Rickart $*$-ring. Note that this also implies that there are more projections in the regular ring $Q=Q_K(E)$ than in $R$ (by Proposition \ref{properties}). Thus, we have a situation that is not present for finite $AW^*$-algebras.

\item Note that $R$ from part (5) of Example \ref{right_strongly_semihereditary_example} is an involutive ring (since it is commutative, the identity map is an involution). Thus, this also illustrates that not all semihereditary involutive rings are strongly semihereditary.
\end{enumerate}
\label{examples_strongly_semihereditary}
\end{example}

These examples show that the class of involutive strongly semihereditary contains quite diverse rings: some of them are Baer $*$ (resp. noetherian, hereditary) while the others are not. This further motivates the interest in this class.

Using Propositions \ref{right_strongly_semihereditary} and \ref{strongly_semihereditary_involutive_rings}, we obtain a characterization of extending Leavitt path algebras. It may not be a surprise that finite no-exit graphs yield extending Leavitt path algebras, but it is interesting to note that extending Leavitt path algebras arise {\em only} from finite no-exit graphs. Also, these graphs are the only graphs with Leavitt path algebras that are strongly semihereditary.

\begin{corollary} Let $E$ be a finite graph and $K$ a field with a positive definite involution. 
The following conditions are equivalent.
\begin{itemize}
\item[(i)] $L_K(E)$ is extending (as a right $L_K(E)$-module).

\item[(ii)] $E$ is a no-exit graph.

\item[(iii)] $L_K(E)$ is strongly semihereditary.

\item[(iv)] $M_n(L_K(E))$ is extending (as a right $M_n(L_K(E))$-module) for every $n$.
\end{itemize}
\label{extending_LPAs}
\end{corollary}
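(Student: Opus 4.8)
The plan is to prove the cycle of implications $(ii)\Rightarrow(iii)\Rightarrow(iv)\Rightarrow(i)\Rightarrow(ii)$, using the accumulated machinery. The implication $(ii)\Rightarrow(iii)$ is essentially already recorded: by part (3) of Example \ref{right_strongly_semihereditary_example}, if $E$ is a finite no-exit graph and $K$ is positive definite, then $\Qrmax(L_K(E))=\Qlmax(L_K(E))=\Qltot(L_K(E))=\Qrtot(L_K(E))$, and since $L_K(E)$ is (semi)hereditary this is condition (5) (hence (5$'$)) of Proposition \ref{strongly_semihereditary_involutive_rings}, giving that $L_K(E)$ is strongly semihereditary. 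For $(iii)\Rightarrow(iv)$: if $L_K(E)$ is right strongly semihereditary, then by Proposition \ref{right_strongly_semihereditary} every matrix ring $M_n(L_K(E))$ is right strongly semihereditary, and then condition (5) of that proposition says precisely that $M_n(L_K(E))$ is extending as a right module over itself, for every $n$. The implication $(iv)\Rightarrow(i)$ is trivial (take $n=1$).

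The substantive implication is $(i)\Rightarrow(ii)$, and this is where I expect the real work to lie. I would argue by contrapositive: suppose $E$ is a finite graph that is \emph{not} no-exit, i.e. $E$ contains a cycle with an exit; I must produce a complemented right ideal of $L_K(E)$ that is not a direct summand, showing $L_K(E)$ is not extending. Since $L_K(E)$ over a finite graph is always hereditary (hence semihereditary) and, by \cite[Theorems 3.8 and 3.10]{AAS2}, the no-exit condition is equivalent to $L_K(E)$ being noetherian, a graph with a cycle with an exit gives a non-noetherian Leavitt path algebra. The strategy is then to invoke the rigidity result quoted in Section \ref{section_strongly_semihereditary}: a ring all of whose finitely generated modules (in particular $R$ itself, together with its finite direct sums — but here we only get $R^1$) are extending, combined with semihereditarity... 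Actually more directly: if $L_K(E)$ is extending and right nonsingular, then by the equivalence (6)$\Leftrightarrow$(5) in Section \ref{section_strongly_semihereditary} one would need $R^n$ extending for all $n$ to conclude strong semihereditarity, so from (i) alone one cannot immediately bootstrap. Instead I would use condition (7) from that section: \emph{$R$ is right semihereditary and $R^2$ is extending} is equivalent to right strong semihereditarity. So it suffices to show: if $E$ has a cycle with an exit, then $R=L_K(E)$ fails to be strongly semihereditary, and moreover $R$ itself (as $R^1$) fails to be extending.

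Here the cleanest route is through finiteness/noetherianness. By Proposition \ref{strongly_semihereditary_involutive_rings}, $R=L_K(E)$ strongly semihereditary would force $\Qrmax(R)=\Qrtot(R)$, and by \cite[Proposition 5.1]{Gonzalo_Lia_noetherian} the equality $Q(E)=\Qrmax(L_K(E))$ holds precisely in the noetherian case; for a non-no-exit graph one can exhibit explicitly that $\Qrtot(L_K(E))\neq \Qrmax(L_K(E))$ — the prototype being $R=M_2(K[x,x^{-1}])$ of Example \ref{examples_strongly_semihereditary}(3), where $\Qrmax$ acquires new idempotents (the element $e$ there generates a complemented right ideal $eR$ with no projection generator, and by Proposition \ref{VariousClosures}(4) in a strongly semihereditary ring every such $eR$ would be a direct summand generated by a projection after passing to $Q$, forcing $p\in R$). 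More efficiently: I would show directly that in a non-no-exit graph $E$ there is a cycle $c$ with an exit, and the idempotent associated to an edge on $c$ versus the exit gives a complemented right ideal of $L_K(E)$ that is essential in $R$ but not a summand — mirroring the computation $e_{11}\notin ee^*R$ in Example \ref{examples_strongly_semihereditary}(3), which shows $eR$ is not generated by a projection; since for a Baer ring every complemented right ideal generated by a non-self-adjoint idempotent can be compared against the projection generating its closure, failure of this equality yields a complemented ideal that is not a summand when $R$ is not already known to be extending. The main obstacle, then, is assembling the explicit combinatorial-to-ring-theoretic dictionary: translating ``$c$ is a cycle with an exit in $E$'' into a concrete idempotent $e\in L_K(E)$ with $eR$ complemented but not a direct summand. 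I would handle this by reducing to the connected-component containing the bad cycle and, within it, to the subalgebra isomorphic to (a corner of) $M_k(K[x,x^{-1}])$ augmented by the exit, where the computation of Example \ref{examples_strongly_semihereditary}(3) applies verbatim.
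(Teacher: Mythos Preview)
Your implications $(ii)\Rightarrow(iii)\Rightarrow(iv)\Rightarrow(i)$ are correct and match the paper's argument (the paper routes $(ii)\Rightarrow(iii)$ through condition (7$'$) of Proposition \ref{strongly_semihereditary_involutive_rings} rather than (5), but this is immaterial).

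The genuine gap is in $(i)\Rightarrow(ii)$. Your contrapositive strategy never closes, and the appeal to Example \ref{examples_strongly_semihereditary}(3) is misplaced: that example concerns $R=M_2(K[x,x^{-1}])$, which is the Leavitt path algebra of a \emph{no-exit} graph and is in fact strongly semihereditary (hence extending). The computation there shows $eR\neq ee^*R$, i.e.\ that $R$ is not Rickart $*$; it does \emph{not} show that some complemented right ideal fails to be a summand. Since $e$ is idempotent, $eR$ is already a direct summand. You are conflating ``not Rickart $*$'' with ``not extending,'' and the prototype you invoke provides no leverage against graphs with an exit. The proposed explicit construction (``the idempotent associated to an edge on $c$ versus the exit'') is never carried out, and it is unclear what the candidate non-summand complemented ideal would be.

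The paper's route for $(i)\Rightarrow(ii)$ is short and avoids all of this: since $L_K(E)$ carries an involution, right extending implies left extending; a ring extending on both sides is directly finite by \cite[Theorem 6.48]{Lam}; and a directly finite Leavitt path algebra over a finite graph forces $E$ to be no-exit by \cite[Theorem 3.10]{Gonzalo_Lia_noetherian}. The key idea you are missing is to pass through \emph{direct finiteness} rather than attempt a hands-on construction of a bad complemented ideal.
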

\begin{proof}
(i) $\Rightarrow$ (ii). Since $L_K(E)$ is an involutive ring, if $L_K(E)$ is extending as a right $L_K(E)$-module, then it is extending as a left $L_K(E)$-module as well. Thus, it is directly finite by \cite[Theorem 6.48]{Lam}. Hence, $E$ is a no-exit graph by \cite[Theorem 3.10]{Gonzalo_Lia_noetherian}.

(ii) $\Rightarrow$ (iii). Assume that $E$ is a no-exit graph. Then the involution extends to the maximal left ring of quotients by \cite[Theorem 3.10]{Gonzalo_Lia_noetherian}. Since the algebra $M_n(L_K(E))$ is also a Leavitt path algebra of a no-exit graph $M_nE$ (the graph $M_nE$ is obtained from $E$ by adding the oriented line of length $n-1$ to every vertex of $E$, see \cite[Definition 9.1 and Proposition 9.3]{Abrams_Tomforde}), $M_n(L_K(E))$ is Baer by \cite[Corollary 3.14]{Gonzalo_Lia_noetherian}. Thus, we have that condition (7$'$) of Proposition \ref{strongly_semihereditary_involutive_rings} holds and so $L_K(E)$ is strongly semihereditary.

(iii) implies (iv) by Proposition \ref{right_strongly_semihereditary} and (iv) trivially implies (i).
\end{proof}

\section{Involutive rings with dimension}
\label{section_dimension_general}

In this section, we develop general theory of dimension defined for a general equivalence of projections $\sim$. We later use this general theory to obtain dimension of specific classes of rings and for specific equivalences of projections.

Recall that the algebraic equivalence $\sima$ of idempotents (see Section \ref{section_introduction}, for more details see  \cite[Section 5]{Berberian_web}) and the order $\leq$ of idempotents (defined by $e\leq f$ iff $ef=fe=e$ iff $e\in fRf$) give rise to another relation
\[e\manjea f \;\;\;\mbox{ iff }\;\;\; e\sima f'\mbox{ for some }f'\leq f.\]
We refer to this relation as {\em algebraic domination}.

In any involutive ring, however, these concepts naturally adapt to projections: the $*$-equivalence $\sims$ (see Section \ref{section_introduction}, for more details see \cite[Section 6]{Berberian_web}) of projections and the order $\leq$ of projections, give rise to the dominance relation defined by
\[p\manjes q\;\;\; \mbox{ iff }\;\;\; p\sims q'\mbox{ for some }q'\leq q.\]
We refer to this relation as {\em $*$-domination}.

Thus, in a $*$-ring we have at least two different equivalence and dominance relations of interest. In \cite{Berberian_web}, Berberian unifies both algebraic and $*$-concepts by considering a general equivalence relation $\sim$ giving rise to general dominance relation $\preceq.$ By studying these concepts, Berberian generalizes his work from \cite{Berberian} where just $*$-equivalence was considered. Following this idea, we start by considering the general setting.

One of the benefits of the generalized approach is that we directly obtain most of the results from \cite{Lia_Baer} simply by interpreting a general equivalence $\sim$ as $*$-equivalence on Baer $*$-ring satisfying nine axioms considered in \cite{Lia_Baer}. We also obtain some  results on involutive strongly semihereditary rings and noetherian Leavitt path algebras in particular, by interpreting general equivalence $\sim$ as algebraic equivalence $\sima.$

We proceed as follows. First, we recall general axioms on Baer $*$-ring related to a general equivalence $\sim.$ These axioms have been referred to as {\em Kaplansky's axioms} in \cite{Berberian_web}. Second, we consider a general dimension function on projections in a Baer $*$-ring. This dimension corresponds to the dimension function as in \cite[Chapter 6]{Berberian} but is not related specifically to $*$-equivalence. We show how it can be extended to dimension of all modules as it was done for finite von Neumann algebras in \cite{Lueck_dimension}. We intent to use these results in the next section in order to apply this general theory to involutive strongly semihereditary rings that do not even have to be Baer $*$-rings.

We start by any equivalence $\sim$ defined on projections in a Baer $*$-ring $R$. It defines a general dominance relation by
\[p\preceq q\;\;\; \mbox{ iff }\;\;\; p\sim q'\mbox{ for some }q'\leq q\]
We consider the following axioms.

\begin{itemize}
\item[(Def)] $p\sim 0$ implies $p=0.$ In this case, $p$ is said to be {\em definite}.

\item[(CC)] $p\sim q$ implies $cp\sim cq$ for every central projection $c.$ In this case, $p$ and $q$ are {\em centrally comparable}.

\item[(IP)] If $\{p_i\}_{i\in I}$ is an orthogonal family of projections with $\sup p_i = p$, and
if $p\sim q$, then there exists an orthogonal family of projections $\{q_i\}_{i\in I}$ such that
$q=\sup q_i$ and $p_i\sim q_i$ for all $i\in I.$ In this case $\{q_i\}$ is said to be an {\em induced partition} of $\{p_i\}.$

\item[(FA)] If $p_1,\ldots, p_n$ are orthogonal projections with  $\sup p_i = p$, and $q_1,\ldots,q_n$
orthogonal projections with $\sup q_i=q,$ $p_i\sim q_i,$ for all $i=1,\ldots, n$, then $p\sim q.$ This is known as {\em finite
additivity}.

\item[(OA)] If $\{p_i\}_{i\in I}$ is an orthogonal family of projections with $\sup p_i = p$,
$\{q_i\}_{i\in I}$ an orthogonal family of projections with $\sup q_i=q,$ $p_i\sim q_i$ for all $i\in I$ and $pq=0,$ then $p\sim q.$ This is known as {\em orthogonal additivity}.

\item[(CA)] If $\{c_i\}_{i\in I}$ is an orthogonal family of central projections with $\sup c_i = 1$, and if $p$ and $q$ are projections such that $c_ip\sim c_iq,$ for all $i\in I,$ then  $p\sim q.$ This is known as {\em central additivity}.

\item[(Add)] (OA) holds without the assumption that $pq=0.$ This is known as {\em additivity of equivalence} or {\em complete additivity}.

\item[(P)] For every two projections $p$ and $q,$ \[ p - \inf\{ p,q\}\sim
\sup\{ p,q\}-q.\] This property is referred to as the {\em parallelogram law.}

\item[(GC)] For projections $p$ and $q,$ there is a
central projection $c$ such that
\[ cp\preceq cq\;\;\mbox{ and }\;\;(1-c)q\preceq
(1-c)p.\] In this case, we say that $p$ and $q$ are {\em generalized comparable}.

\item[(Fin)] $p\sim 1$ implies $p=1.$ In this case, we say that 1 is a finite projection and that $R$ is {\em finite} ring for $\sim.$
\end{itemize}

It has been showed that (Def) to (OA) and (P) imply (GC) and (Add) (\cite[Theorem 2.1]{Maeda_Holland}).

The axioms (Def), (CC), (IP) and (FA)  hold in a Baer $*$-ring both for $\sima$ and $\sims$ and (OA) holds for $\sims$ (see \cite[p. 47]{Kaplansky}). If $R$ is regular in addition to being Baer $*$-ring, then (OA) and (P) hold for $\sima$ (see \cite[p. 47]{Kaplansky}) thus (GC) and (Add) hold for $\sima$ as well (by \cite[Theorem 2.1]{Maeda_Holland}). Also, if $R$ is a regular Baer $*$-ring (thus $*$-regular as well), then (Fin) holds for $\sims$ (see \cite[Theorem 3.1]{Ara_Menal}). If $R$ is regular and self-injective, then (Fin) holds for $\sima$ (see \cite[Prop. 5.2 and Thm. 9.29]{Goodearl_book}).

Before relaxing our assumptions in the next section, we consider Baer $*$-rings with (Fin) and (GC). This is because such rings have a dimension function on the set of all projections uniquely determined by four favorable properties and satisfying additional seven properties. Moreover, we would like to have the same dimension function on the matrix rings $M_n(R)$ for all $n,$ so we  assume that $M_n(R)$ is a Baer $*$-ring with (Fin) and (GC) for every $n$ as well. Equipped with such a dimension function, our goal is to define a dimension of finitely generated projective $R$-modules first and then to extend it to a dimension of any $R$-module. Finally, using $\Qrmax(R)$ and its favorable properties for strongly semihereditary $*$-ring $R$, we obtain a dimension even in case when $R$ is not necessary a Baer $*$-ring.

Let us first discuss the domain of a dimension function. Let $R$ be a Baer $*$-ring that satisfies (Def) to (OA), (GC) and (Fin). Let $Z$
denote the center of $R.$ The projection lattice $P(Z)$ of $Z$ is
a complete Boolean algebra and, as such, may be identified with
the Boolean algebra of closed-open subspaces of a Stonian space
$X$. The space $X$ can be viewed as the set of maximal ideals of
$P(Z);$ $p\in P(Z)$ can be identified with the closed-open subset
of $X$ that consist of all maximal ideals that exclude $p.$

The algebra $C(X)$ of continuous complex-valued functions on $X$
is a commutative $AW^*$-algebra. An element $p\in P(Z)$ can be
viewed as an element of $C(X)$ by identifying $p$ with the
characteristic function of the closed-open subset of $X$ to which
$p$ corresponds. The positive unit ball $\{p\in C(X)| 0\leq p\leq 1\}$ is a complete lattice (\cite[p. 162]{Berberian}).

\begin{theorem}\cite[Sec. 33]{Berberian}, \cite[Sec. 19]{Berberian_web}
Let $R$ be a Baer $*$-ring that satisfies (Def) to (OA), (GC) and (Fin) for a relation $\sim$. Then there exists a unique function $d: P(R)\rightarrow C(X)$ such
that (D1)--(D4) hold.
\begin{itemize}
\item[(D1)] $p\sim q$ implies $d(p)=d(q),$

\item[(D2)] $d(p)\geq 0,$

\item[(D3)] $d(c)=c$ for every $c\in P(Z),$

\item[(D4)] $pq=0$ implies $d(p+q)=d(p)+d(q).$
\end{itemize}
We call the function $d$ the {\em dimension function} for $\sim$. It
satisfies the following properties:
\begin{itemize}
\item[(D5)] $0\leq d(p)\leq 1,$

\item[(D6)] $d(cp)=cd(p)$ for every $c\in P(Z),$

\item[(D7)] $d(p)=0$ iff $p=0,$

\item[(D8)] $p\sim q$ iff $d(p)=d(q),$

\item[(D9)] $p\preceq q$ iff $d(p)\leq d(q),$

\item[(D10)] If $p_i$ is an increasingly directed family of
projections with supremum $p$, then $d(p)= \sup d(p_i),$

\item[(D11)] If $p_i$ is an orthogonal family of projections with
supremum $p$, then $d(p)= \sum d(p_i).$
\end{itemize}
\label{dim_function_exists}
\end{theorem}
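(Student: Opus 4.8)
The plan is to follow Berberian's construction of the center-valued dimension function for finite Baer $*$-rings with generalized comparability (\cite[\S 33]{Berberian}; in the form closest to the present generality, \cite[\S 19]{Berberian_web}) and to check that every step uses only the axioms (Def)--(OA), (GC) and (Fin) for $\sim$, never anything specific to $\sims$ --- in particular neither the parallelogram law (P) nor any compatibility with the involution beyond what those axioms already encode. So the bulk of the work is to extract from Chapter 6 of \cite{Berberian} the precise list of lemmas feeding into the construction of $d$ and to confirm that each survives the passage from $\sims$ to a general $\sim$.

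I would first dispatch \emph{uniqueness}. From (D2) and (D4) one already gets $d(0)=0$ and monotonicity ($p\le q$ implies $d(p)\le d(q)$, since then $q=p+(q-p)$ with $p$ and $q-p$ orthogonal), and combining monotonicity with (D3) gives $d(cp)=c\,d(p)$ for every central projection $c$. The substantive input is the rank-and-division machinery of \cite[\S\S 26, 33]{Berberian}, resting on (GC), (Fin) and the type decomposition: for every positive integer $n$ it traps $d(p)$, after passing to a suitable orthogonal family of central projections with supremum $1$, within $1/n$ of a central step function (the integer-valued rank handling the type $\mathrm{I}$ finite part, repeated halving the continuous part). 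Since the positive unit ball $\{f\in C(X):0\le f\le1\}$ is a complete lattice, these approximations determine $d$, so $d$ is unique.

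For \emph{existence} I would simply \emph{define} $d(p)$ by this procedure: for each $n$, (GC) and (Fin) produce the best central step function $a_n(p)$ with values in $\{0,1/n,2/n,\ldots,1\}$ approximating $p$ from below, the $a_n(p)$ cohere as $n$ grows, and $d(p):=\sup_n a_n(p)$ in the complete lattice $\{f\in C(X):0\le f\le1\}$. The defining properties follow directly: (D1) and (D3) are built into the approximants, (D2) is immediate, and (D4) for orthogonal $p,q$ follows from (OA), (FA) and (IP) applied to the approximating partitions (refined to a common denominator). The rest is formal: (D5) is $0\le a_n(p)\le1$; (D6) was obtained above; (D9), and with it (D8), comes from comparing the approximations of $p$ and $q$ and using (GC) together with (Fin) to upgrade ``$d(p)\le d(q)$'' to ``$p\preceq q$''; (D7) follows from (D9) with $q=0$ together with (Def); and (D10)--(D11) follow from (OA) --- directly for (D11), and for (D10) after replacing an increasingly directed family by the orthogonal family of its successive differences. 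The real obstacle is not any single step but doing the bookkeeping honestly: \S 33 of \cite{Berberian} rests on the structure and type theory of \S\S 25--32, all written for $\sims$, so one must confirm that every prerequisite is either already available for a general $\sim$ in \cite[\S\S 5--19]{Berberian_web} or re-derivable from (Def)--(OA), (GC) and (Fin) alone. The one genuinely delicate point is that Berberian uses (P) freely; here it can be avoided precisely because (GC) is \emph{assumed} rather than derived --- recall (P) enters only via \cite[Theorem 2.1]{Maeda_Holland} as a way of \emph{producing} (GC) and (Add) --- and because (OA) does the work of (Add) throughout the dimension argument. This is the verification carried out in \cite[\S 33]{Berberian} and \cite[\S 19]{Berberian_web}, to which the remaining details are deferred.
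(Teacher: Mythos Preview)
Your proposal is correct and matches the paper's approach: the paper does not give an independent proof but simply cites \cite[Chapter 6]{Berberian} for $\sims$ and \cite[\S 19]{Berberian_web} for the general $\sim$, noting that Berberian's arguments transfer verbatim under the listed axioms. Your sketch of how that transfer works (rank-and-division approximation by central step functions, uniqueness from the complete lattice structure of the positive unit ball of $C(X)$, and the observation that (P) is unnecessary because (GC) is assumed outright) is faithful to Berberian's construction and goes into more detail than the paper itself does; the one small wrinkle is your derivation of (D10) from (D11) by ``successive differences,'' which as stated only works for chains, but this is handled in Berberian's treatment and does not affect the overall strategy.
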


\cite[Chapter 6]{Berberian} is devoted to the proof of this theorem for $\sims$. In \cite[Section 19]{Berberian_web}, Berberian claims that the proofs in \cite{Berberian} transfer to any general relation $\sim$ on a Baer $*$-ring under assumptions imposed on $R$ in the theorem above. As a corollary, the dimension function for $\sima$ exists on any regular Baer $*$-ring (\cite[19.7]{Berberian_web}).

The dimension function for an equivalence relation $\sim$ is uniquely determined by $\sim.$ Moreover, in Proposition \ref{uniqueness_of_dim} we prove a stronger statement. To prove the proposition, we need a few preliminary facts and a lemma. The preliminary facts also illustrate the idea of the proof of Theorem \ref{dim_function_exists}.

A projection $p$ is said to be {\em simple} if there is a central projection $c,$ a positive integer $n$ and orthogonal projections $p_i,$ $i=1,\ldots, n$ with $p_i\sim p$ for all $i,$ such that $c=p_1+\ldots+p_n.$ By \cite[Proposition 10, sec. 26]{Berberian}, $c$ is unique and we denote it by $C(p)$ following the notation from \cite{Berberian}. Positive integer $n$ is also unique by \cite[Proposition 1, sec. 26]{Berberian}. This integer is called the {\em order} of a simple projection $p.$ The dimension of a simple projection of order $n$ thus can be defined as \[d(p)=\frac{1}{n}C(p).\]

If $R$ satisfies the assumptions of Theorem \ref{dim_function_exists}, then every projection $p$ is a sum of an orthogonal family of simple projections $p_i,$ $i\in I.$ This fact is proven for $\sims$ in \cite[Propositions 14 and 16, sec. 26]{Berberian}. The theory of types of a Baer $*$-ring also generalizes to any equivalence relation $\sim$ (see \cite[Sections 15--18]{Berberian_web}). The proofs of \cite[Propositions 14 and 16, sec. 26]{Berberian} translate to the general case as well by results of \cite[Sections 15--18]{Berberian_web}. So \cite[Propositions 14 and 16, sec. 26]{Berberian} hold for general $\sim$ as well.

If a projection $p$ is a sum of an orthogonal family of simple projections $p_i,$ $i\in I,$ then the dimension of $p$ can be defined as
\[d(p)=\sum_{i\in I} d(p_i).\]
Details can be found in \cite[Chapter 6]{Berberian}. The proofs from \cite{Berberian} given for $\sims$ hold for a general equivalence relation $\sim$ again thanks to the type decomposition results of \cite[Sections 15--18]{Berberian_web}.

In this way, the dimension of a projection is defined via dimensions of simple projections which, in turn, are defined via central projections. Thus, if a ring has a dimension function, instead of working with all of the projections, one can concentrate on central projections and their simple subprojections (i.e. ``diagonal standard matrix units'') without loosing any relevant information.

We summarize the observations above in the following lemma.

\begin{lemma}
Let $R$ be a ring satisfying assumptions of Theorem \ref{dim_function_exists} with the dimension function $d.$
\begin{enumerate}
\item If $p$ is a simple projection of order $n,$ then $d(p)=\frac{1}{n}C(p).$

\item If $p$ is a projection, then $p$ is a sum of an orthogonal family of simple projections $p_i,$ $i\in I$ and $d(p)=\sum_{i\in I} d(p_i).$
\end{enumerate}
\label{lemma_on_simple}
\end{lemma}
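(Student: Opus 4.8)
The plan is to read Lemma~\ref{lemma_on_simple} as a direct repackaging of the facts already established in the paragraphs immediately preceding it, so the proof should be short and consist mainly of citing those facts in the right order. First I would recall the definition of a simple projection $p$ of order $n$: by definition there is a central projection $c$, a positive integer $n$, and orthogonal projections $p_1,\dots,p_n$ with $p_i\sim p$ for all $i$ and $c=p_1+\dots+p_n$; and by \cite[Propositions~1 and~10, sec.~26]{Berberian} (which, as noted in the excerpt, transfer to a general $\sim$ via \cite[Sections~15--18]{Berberian_web}) both $c$ and $n$ are uniquely determined by $p$, so the notation $C(p)$ and ``order'' are well defined. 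Then part~(1) follows by applying (D4) repeatedly to the orthogonal decomposition $c=p_1+\dots+p_n$ and (D1) to each $p_i\sim p$, giving $d(c)=\sum_{i=1}^n d(p_i)=n\,d(p)$; since $d(c)=C(p)$ by (D3), we get $d(p)=\tfrac1n C(p)$. (One also checks this is consistent with the pre-existing normalization $d(p)=\tfrac1n C(p)$ used to build $d$ in the first place.)

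For part~(2), I would invoke the decomposition result directly: under the assumptions of Theorem~\ref{dim_function_exists}, every projection $p$ is the supremum of an orthogonal family of simple projections $\{p_i\}_{i\in I}$ — this is \cite[Propositions~14 and~16, sec.~26]{Berberian}, again valid for general $\sim$ by the type-decomposition results of \cite[Sections~15--18]{Berberian_web}, as the excerpt has already spelled out. Then property (D11) of the dimension function (``if $p_i$ is an orthogonal family with supremum $p$ then $d(p)=\sum d(p_i)$'') gives $d(p)=\sum_{i\in I}d(p_i)$ immediately. So the entire content of the lemma is: (1) = (D3)+(D4)+(D1)+uniqueness of the order; (2) = the Berberian decomposition + (D11).

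Honestly, there is no real obstacle here — the lemma is stated precisely so that a reader does not have to re-derive these points inside the proof of Proposition~\ref{uniqueness_of_dim}. The only thing requiring a sentence of care is making clear that each cited fact from \cite{Berberian}, originally proved for $\sims$, does carry over to an arbitrary equivalence $\sim$ satisfying the hypotheses, and here one simply points to the discussion in the excerpt and to \cite[Sections~15--19]{Berberian_web}. I would therefore write the proof as essentially two short paragraphs, one per item, each a chain of citations, and I would not attempt any computation beyond the trivial telescoping $d(c)=n\,d(p)$.

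\begin{proof}
Both statements are immediate from the discussion preceding the lemma; we collect the references.

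(1) Let $p$ be a simple projection of order $n$, so that there are a central projection $c$ and orthogonal projections $p_1,\dots,p_n$ with $p_i\sim p$ for all $i$ and $c=p_1+\dots+p_n$; by \cite[Propositions~1 and~10, sec.~26]{Berberian} (valid for a general equivalence $\sim$ under the hypotheses of Theorem~\ref{dim_function_exists} by \cite[Sections~15--18]{Berberian_web}) the projection $c=C(p)$ and the integer $n$ are uniquely determined by $p$. Applying (D4) to the orthogonal sum $c=p_1+\dots+p_n$ and then (D1) to each $p_i\sim p$ gives
\[d(C(p))=\sum_{i=1}^n d(p_i)=n\,d(p).\]
By (D3), $d(C(p))=C(p)$, so $d(p)=\frac{1}{n}C(p)$.

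(2) By \cite[Propositions~14 and~16, sec.~26]{Berberian} — which hold for a general equivalence $\sim$ by the type-decomposition results of \cite[Sections~15--18]{Berberian_web} — every projection $p$ is the supremum of an orthogonal family of simple projections $\{p_i\}_{i\in I}$. Property (D11) of the dimension function then yields $d(p)=\sum_{i\in I}d(p_i)$.
\end{proof}
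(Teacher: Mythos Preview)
Your proof is correct and follows essentially the same route as the paper: part~(1) uses (D3), (D4), and (D1) on the orthogonal decomposition $C(p)=p_1+\dots+p_n$ to get $C(p)=n\,d(p)$, and part~(2) invokes the Berberian decomposition into simple projections together with (D11). The only difference is cosmetic---you spell out the citations to \cite{Berberian} and \cite{Berberian_web} more explicitly than the paper does.
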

\begin{proof}
(1) follows from the fact that there are orthogonal $p_i,$ $i=1,\ldots, n,$ with $p_i\sim p$ and $C(p)=p_1+\ldots+p_n.$ Thus $C(p)=d(C(p))=d(p_1+\ldots+p_n)=d(p_1)+\ldots+d(p_n)=d(p)+\ldots+d(p)=nd(p).$

(2) The existence of simple projections $p_i$ follows from assumptions of the lemma. The equality $d(p)=\sum_{i\in I} d(p_i)$ holds by property D11.
\end{proof}

\begin{proposition}
If $R$ is a ring satisfying assumptions of Theorem \ref{dim_function_exists} for two equivalence relations on projections $\sim_1$ and  $\sim_2,$ then there are two corresponding dimension functions $d_1$ and $d_2$ and the following holds.
\begin{enumerate}
\item If $\sim_1$ equivalence implies $\sim_2$ equivalence then $d_1=d_2.$

\item If $d_1=d_2$ then the equivalences $\sim_1$ and $\sim_2$ coincide.

\item If  $\sim_1\,\subseteq\, \sim_2,$ then $\sim_1\,=\, \sim_2.$
\end{enumerate}
\label{uniqueness_of_dim}
\end{proposition}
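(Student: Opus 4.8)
The plan is to leverage Lemma \ref{lemma_on_simple} to reduce every statement to the behavior of $d_1$ and $d_2$ on simple projections, since both dimension functions are built up additively from simple projections (via property (D11)), and the values on simple projections are determined by their orders and central covers.

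For part (1), suppose $\sim_1\;\Rightarrow\;\sim_2$. First I would check that a $\sim_1$-simple projection $p$ of order $n$ is also $\sim_2$-simple of order $n$: the orthogonal family $p_1,\dots,p_n$ with $p_i\sim_1 p$ and $C(p)=p_1+\dots+p_n$ remains a family with $p_i\sim_2 p$, so $p$ is $\sim_2$-simple; its $\sim_2$-order is again $n$ because $C(p)$ and the order are both uniquely determined (by \cite[Propositions 1 and 10, sec. 26]{Berberian}, which hold for any $\sim$ as discussed before Lemma \ref{lemma_on_simple}), and $C(p)$ does not depend on which equivalence we use to exhibit simplicity — it is characterized as the central cover. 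Hence $d_1(p)=\frac1n C(p)=d_2(p)$ on simple projections. Now given an arbitrary projection $p$, write $p=\sup_{i} p_i$ for an orthogonal family of $\sim_1$-simple projections (this decomposition exists by the assumptions of Theorem \ref{dim_function_exists} for $\sim_1$); each $p_i$ is also $\sim_2$-simple with the same order, so by Lemma \ref{lemma_on_simple}(2) applied with each equivalence, $d_1(p)=\sum_i d_1(p_i)=\sum_i d_2(p_i)=d_2(p)$. Thus $d_1=d_2$.

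For part (2), assume $d_1=d_2=:d$. If $p\sim_1 q$, then by (D1) for $d_1$ we get $d(p)=d(q)$, and then by (D8) for $d_2$ (which is an iff statement) we get $p\sim_2 q$; the reverse implication is symmetric. So $\sim_1$ and $\sim_2$ are the same relation. Part (3) is then immediate: if $\sim_1\;\subseteq\;\sim_2$ then $\sim_1\;\Rightarrow\;\sim_2$, so part (1) gives $d_1=d_2$, and part (2) gives $\sim_1\;=\;\sim_2$.

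The main obstacle I anticipate is part (1), specifically verifying cleanly that "simple of order $n$" and the central cover $C(p)$ are intrinsic enough that passing from $\sim_1$ to the coarser $\sim_2$ does not change them — one must be careful that the $\sim_2$-order of $p$ could a priori be a proper divisor or multiple of $n$ if $p$ admits a different simple decomposition under $\sim_2$; this is ruled out by the uniqueness of the order, but the argument should spell out that the order is determined by the ring-theoretic data $C(p)$ and $p$ alone (equivalently, via $d_2(p)=\frac1n C(p)$, which forces $n$ once we know $d_2(p)=d_1(p)=\frac1n C(p)$ — though invoking $d_2$ here would be circular, so the cleaner route is the cited uniqueness result). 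Everything else is a direct application of the listed properties (D1)–(D11) and Lemma \ref{lemma_on_simple}.
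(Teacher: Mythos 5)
Your proposal is correct and follows essentially the same route as the paper: reduce to simple projections via Lemma \ref{lemma_on_simple}, note that a $\sim_1$-simple projection is $\sim_2$-simple with the same order and central cover so $d_1(p)=\frac{1}{n}C(p)=d_2(p)$, extend to arbitrary projections by (D11), and deduce (2) from (D1) and (D8) and (3) from (1) and (2). The worry you raise about the $\sim_2$-order is not really an issue, since the computation $C(p)=d_2(C(p))=\sum d_2(p_i)=nd_2(p)$ applies directly to the $\sim_1$-witnessing family (which also witnesses $\sim_2$-simplicity) without any appeal to uniqueness of the order.
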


\begin{proof}
(1) If  $\sim_1\,\subseteq\, \sim_2$, then projections that are simple for $\sim_1$ are simple for $\sim_2$ as well. For one such projection $p$ of order $n$, $d_1(p)=\frac{1}{n}C(p)=d_2(p)$ by part (1) of Lemma \ref{lemma_on_simple}.

If $p$ is any projection which is a sum of simple orthogonal projections $p_i,$ $i\in I$ with respect to $\sim_1$, then
$d_1(p)=\sum_{i\in I} d_1(p_i)$ by part (2) of Lemma \ref{lemma_on_simple}. The equality $\sum_{i\in I} d_1(p_i)=\sum_{i\in I}d_2(p_i)$ holds by the previous paragraph. Finally, $\sum_{i\in I} d_2(p_i)=d_2(p)$ by property D11 for $d_2$ and so $d_1(p)=d_2(p).$

(2) Assume that $p\sim_1 q$ for some projections $p$ and $q.$ Then $d_2(p)=d_1(p)=d_1(q)=d_2(q)$ and so $p\sim_2 q$ by property D8 for $d_2.$ The converse follows in the same way.

(3) The claim follows from (1) and (2).
\end{proof}

For rings equipped with a dimension function, we would like to extend it to a dimension defined for all modules. We follow the idea from \cite{Lueck_dimension} used for finite von Neumann algebras. In \cite{Lueck_dimension}, L\"uck generalizes the arguments he used for finite von Neumann algebras to the following more general theorem (which can also be found in \cite{Lueck}).

\begin{theorem}\cite[Theorem 0.6 and Remark 2.14]{Lueck_dimension}
Let $R$ be a ring such that there exists a dimension $\dim$ that assigns to any finitely generated projective
right $R$-module an element of $[0, \infty)$ and such that the
following two conditions hold.
\begin{itemize}
\item[(L1)] If $P$ and $Q$ are finitely generated projective
modules, then
\begin{itemize}
\item[(i)] $\dim(P)$ depends only on the isomorphism class of $P,$

\item[(ii)] $\dim(P\oplus Q)=\dim(P)+\dim(Q).$
\end{itemize}

\item[(L2)] If $K$ is a submodule of finitely generated projective
module $Q,$ then $\cl_{\bnd}(K)$ is a direct summand of $Q$ and
\[\dim(\cl_{\bnd}(K))=\sup\{\dim(P)\;|\; P\mbox { is a fin. gen. proj. submodule of } K\}.\]
\end{itemize}
Then, for every $R$-module $M$, we can define a dimension
\[\dim'(M)=\sup \{ \dim(P)\; |\; P \mbox{ is a fin. gen.
proj. submodule of }M\}\in[0,\infty]\] that satisfies the
following properties:
\begin{enumerate}
\item Extension: $\dim(P)=\dim'(P)$ for every finitely generated
projective module $P.$

\item Additivity: If $\;0\rightarrow M_0\rightarrow M_1\rightarrow
M_2\rightarrow 0$ is a short exact sequence of $R$-modules, then
\[ \dim'(M_1)= \dim'(M_0)+\dim'(M_2).\]

\item Cofinality: If $M = \bigcup_{i \in I}M_i$ is a directed
union, then \[\dim'(M) = \sup\{\;\dim'(M_i)\; |\; i\in I\;\}.\]

\item Continuity: If $K$ is a submodule of a finitely generated
module $M$, then \[\dim'(K)=\dim'(\cl_{\bnd}(K)).\]

\item If $M$ is a finitely generated module, then
\[\dim'(M)=\dim(\unb M)\;\;\mbox{ and }\;\;\dim'(\bnd M)=0.\]

\item The dimension $\dim'$ is uniquely determined by (1) -- (4).
\end{enumerate}
\label{LueckDimension}
\end{theorem}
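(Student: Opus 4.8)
The plan is to follow L\"uck's extension argument \cite[Theorem 0.6 and Remark 2.14]{Lueck_dimension}, organized so that the hypothesis (L2) carries all the content and everything else is bookkeeping. I would begin by recording two facts that are immediate from the definition of $\dim'$: it is monotone under submodules ($N\subseteq M$ implies $\dim'(N)\le\dim'(M)$) and invariant under isomorphism. From (L2) I then extract what I will call the \emph{basic identity}: for a submodule $A$ of a finitely generated projective module $S$, the module $\cl_{\bnd}^S(A)$ is a direct summand of $S$ and $\dim(\cl_{\bnd}^S(A))=\dim'(A)$ (the supremum in (L2) being exactly the one defining $\dim'(A)$). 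Applying this to the kernel $L$ of a finitely generated projective presentation $S\twoheadrightarrow M$ of a finitely generated $M$ gives $S=\cl_{\bnd}^S(L)\oplus P_1$ with $P_1$ finitely generated projective, hence $M\cong\bnd M\oplus P_1$ with $\bnd M$ finitely generated and $\unb M\cong P_1$ finitely generated projective. Lifting a finitely generated projective submodule of $M$ through $S\twoheadrightarrow M$ (using projectivity) and invoking the basic identity shows $\dim'(M)\le\dim(S)<\infty$; taking $M=P$ itself gives the Extension property (1), $\dim'(P)=\dim(P)$ for $P$ finitely generated projective. Cofinality (3) is also immediate, since a finitely generated submodule of a directed union lies in one term.

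The one step with real content is additivity for a short exact sequence $0\to A\to S\to B\to 0$ whose middle term $S$ is finitely generated projective. For $\dim'(S)\ge\dim'(A)+\dim'(B)$ — which holds for any short exact sequence — I would lift a finitely generated projective $P_B\subseteq B$ through $S\to B$, split the lift by projectivity to get $P_B'\subseteq S$ with $P_B'\cong P_B$ and $P_B'\cap A=0$, and observe that for any finitely generated projective $P_A\subseteq A$ the submodule $P_A\oplus P_B'\subseteq S$ has dimension $\dim(P_A)+\dim(P_B)$; taking suprema gives the inequality. For $\dim'(S)\le\dim'(A)+\dim'(B)$, the basic identity gives $S=\cl_{\bnd}^S(A)\oplus P_1$ with $\dim(\cl_{\bnd}^S(A))=\dim'(A)$ and $P_1$ finitely generated projective, and since $A\subseteq\cl_{\bnd}^S(A)$ one gets $B\cong(\cl_{\bnd}^S(A)/A)\oplus P_1$, so $\dim'(B)\ge\dim(P_1)$ by monotonicity and (1), whence $\dim'(A)+\dim'(B)\ge\dim(\cl_{\bnd}^S(A))+\dim(P_1)=\dim(S)=\dim'(S)$. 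This settles the case, and as a by-product a finitely generated bounded module $B=S/A$ has $\cl_{\bnd}^S(A)=S$, hence $\dim'(A)=\dim'(S)$, and additivity forces $\dim'(B)=0$.

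Additivity for a general short exact sequence $0\to M_0\to M_1\to M_2\to 0$ then follows in two layers. If $M_1$ is finitely generated, the inequality $\ge$ is the general case above; for $\le$, I would take any finitely generated projective $P\subseteq M_1$, let $\bar P\subseteq M_2$ be its image, apply the finitely-generated-projective case to $0\to P\cap M_0\to P\to\bar P\to 0$ to get $\dim(P)=\dim'(P\cap M_0)+\dim'(\bar P)\le\dim'(M_0)+\dim'(M_2)$ by monotonicity, and take the supremum over $P$. For arbitrary $M_1=\bigcup_i N_i$ written as the directed union of its finitely generated submodules, the unions $M_0=\bigcup_i(M_0\cap N_i)$ and $M_2=\bigcup_i\pi(N_i)$ (with $\pi\colon M_1\to M_2$ the quotient map) are directed, so by (3) the three dimensions are suprema of the corresponding monotone nets, and the termwise identity $\dim'(N_i)=\dim'(M_0\cap N_i)+\dim'(\pi(N_i))$ passes to the supremum because $\sup_i(a_i+b_i)=\sup_i a_i+\sup_i b_i$ for monotone nets. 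The remaining properties are formal consequences: Continuity (4) follows by applying additivity to $0\to K\to\cl_{\bnd}^M(K)\to\bnd(M/K)\to 0$, noting $\bnd(M/K)$ is a direct summand of the finitely generated $M/K$ and hence a finitely generated bounded module with $\dim'=0$; property (5) follows from $M\cong\bnd M\oplus\unb M$ together with additivity and (1); and Uniqueness (6) follows because any $\dim''$ satisfying (1)--(4) must equal $\dim$ on finitely generated projectives by (1), must equal $\dim(\unb M)$ on finitely generated $M$ by (4) (with $K=0$, giving $\dim''(\bnd M)=0$) and (2), and then must agree with $\dim'$ on every $M=\bigcup_i N_i$ by (3).

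I expect the genuine obstacle to be the subadditivity half of the finitely-generated-projective case: it is the only place where both clauses of (L2) are used in an essential way, and it is precisely here that, in the concrete situations of this paper, one invokes Theorem \ref{SplittingOfBnd} and Proposition \ref{VariousClosures} to know that $\cl_{\bnd}$ of a submodule of $R^n$ is a direct summand with the predicted dimension. Granting (L2) as a hypothesis, as in the statement, this step is short but is where the whole argument lives; the surrounding manipulations with directed unions and monotone suprema, while somewhat lengthy to write out in full, are routine.
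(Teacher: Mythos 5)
Your proposal is correct. Note first that the paper itself gives no proof of this theorem: it is imported verbatim from L\"uck (\cite[Theorem 0.6 and Remark 2.14]{Lueck_dimension}) with only the remark that the argument survives replacing $[0,\infty)$ by $C_{[0,\infty)}(X)$, so the only thing to compare against is L\"uck's original argument, which your reconstruction follows faithfully: the ``basic identity'' $\dim(\cl_{\bnd}^S(A))=\dim'(A)$ extracted from (L2), the splitting $M\cong\bnd M\oplus\unb M$ for finitely generated $M$, additivity first for a finitely generated projective middle term and then in general, and the formal derivation of continuity, property (5) and uniqueness. I checked the two places where something could go wrong and both are sound: the subadditivity half of the projective case correctly combines $S=\cl_{\bnd}^S(A)\oplus P_1$ with $B\cong(\cl_{\bnd}^S(A)/A)\oplus P_1$ and monotonicity, and the vanishing of $\dim'$ on finitely generated bounded modules uses the finiteness $\dim'(S)<\infty$ needed to subtract. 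One small structural remark: your final ``directed union'' layer is redundant, since your $\le$ argument (lift a finitely generated projective $P\subseteq M_1$, apply the projective-middle-term case to $0\to P\cap M_0\to P\to\bar P\to 0$, take suprema) and your $\ge$ argument both work for arbitrary $M_1$ without writing it as a union of finitely generated submodules; the cofinality-plus-monotone-suprema step costs nothing but buys nothing either.
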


We intend to use Theorem \ref{LueckDimension} although the value of the dimension of a finitely generated projective module is in the lattice $C_{[0,\infty)}(X),$ the algebra of functions from $C(X)$ with values in $[0,\infty),$ not in $[0,\infty)$ itself. The lattice $C_{[0,\infty)}(X)$ is a boundedly complete lattice with respect to the pointwise ordering (\cite[p. 161--162]{Berberian}) and the proof of Theorem \ref{LueckDimension} still holds if the dimension of a finitely generated projective module is in  $C_{[0,\infty)}(X)$ instead of $[0,\infty).$ As a consequence, the dimension of any module is an element of $C_{[0,\infty)}(X)\cup\{\infty\},$ not in $[0,\infty].$

Also, in the proof of Theorem \ref{LueckDimension}, condition (L1) (i) is only used in the form
\[P\cong Q \Rightarrow \dim(P)=\dim(Q).\]

By Extension property, $\dim'$ can be denoted simply as $\dim$ without any danger of confusion. In what follows, we use just notation $\dim.$

Before we state the next result, recall that two projections $p$ and $q$ are said to be similar if $p=uqu^{-1}$ for some unit $u.$
Recall also that similar projections $p$ and $q$ are algebraically equivalent (since $p=xy$ and $q=yx$ for $x=uq$ and $y=u^{-1}$ in this case).

\begin{proposition}
Let $R$ be a $*$-ring with equivalence relation $\sim$ defined on projections of $M_n(R)$ for every $n.$ Assume that for every $n$ the following holds.
\begin{itemize}
\item[(i)] $M_n(R)$ is a Baer $*$-ring with (Def) to (OA), (GC) and (Fin).

\item[(ii)] Similar projections are equivalent in $M_n(R)$.

\item[(iii)] $M_n(R)$ is regular.
\end{itemize}
Then, dimension $\dim_R$ can be defined for all right $R$-modules by the following two steps:
\begin{enumerate}
\item If $P$ is a finitely generated projective $R$-module,  then
there is a nonnegative integer $n$ and a projection $p\in M_n(R)$ such that the image of left multiplication by $p$ is isomorphic to $P$. In this case, define
\[\dim_R(P)=d(p)\]
where $d$ is the dimension function defined on projections by Theorem \ref{dim_function_exists} using condition (i).

\item If $M$ is any $R$-module, define
\[\dim_R(M)=\sup \{ \dim_R(P)\; |\; P \mbox{ is a
fin. gen. projective submodule of }M\}\] where the supremum on the
right side is an element of $C_{[0,\infty)}(X)$ if it exists and it is a new
symbol $\infty$ otherwise. We define
$a+\infty=\infty+a=\infty=\infty+\infty$ and $a\leq \infty$ for
every $a\in C_{[0,\infty)}(X).$
\end{enumerate}
The function $\dim_R$ satisfies conditions (L1) and (L2) of Theorem \ref{LueckDimension}, and thus $\dim_R$ satisfies properties (1)--(6) of  Theorem \ref{LueckDimension}.
\label{general_dim_on_Q}
\end{proposition}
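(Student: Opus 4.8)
The plan is to verify conditions (L1) and (L2) of Theorem \ref{LueckDimension} for the function $\dim_R$ defined in the two steps, after which the properties (1)--(6) follow automatically. The very first thing to check is that $\dim_R$ is well defined on finitely generated projective modules: if $P$ is the image of left multiplication by a projection $p\in M_n(R)$ and also the image of left multiplication by a projection $p'\in M_m(R)$, then (after padding with zeros in the larger matrix ring $M_{n+m}(R)$, whose dimension function restricts correctly by (D3)/(D6) and the identification of projections) the projections $p$ and $p'$ generate isomorphic projective modules. Isomorphic finitely generated projective modules over any ring have similar defining idempotents once one passes to a large enough matrix ring, and by regularity of $M_{n+m}(R)$ every idempotent is similar to a projection; hence assumption (ii) and property (D1) give $d(p)=d(p')$. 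This same argument shows (L1)(i): $P\cong Q$ forces $\dim_R(P)=\dim_R(Q)$, which (as noted in the remark after Theorem \ref{LueckDimension}) is the only form of (L1)(i) actually used.

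Condition (L1)(ii), additivity on direct sums, is straightforward: if $P$ and $Q$ are represented by projections $p\in M_n(R)$ and $q\in M_m(R)$, then $P\oplus Q$ is represented by the block-diagonal projection $p\oplus q\in M_{n+m}(R)$, and since $p\oplus q = (p\oplus 0)+(0\oplus q)$ is an orthogonal sum of projections, (D4) gives $d(p\oplus q)=d(p)+d(q)$, i.e. $\dim_R(P\oplus Q)=\dim_R(P)+\dim_R(Q)$.

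The substantive point is (L2). Here I would let $K$ be a submodule of a finitely generated projective $Q$. First, $Q$ is a direct summand of some $R^n$, and since $R$ is right strongly semihereditary (this is the context in which the proposition will be applied; more precisely, $M_n(R)$ Baer plus the hypotheses of Theorem \ref{dim_function_exists} put us in the situation of Proposition \ref{VariousClosures} and Theorem \ref{SplittingOfBnd}), $\cl_{\bnd}(K)$ is a direct summand of $Q$ by Theorem \ref{SplittingOfBnd}(1); so the first assertion of (L2) is immediate. For the dimension equality, write $\cl_{\bnd}(K)=P_0$, a finitely generated projective submodule of $Q$ that contains $K$ as an essential-modulo-bounded submodule; since $P_0$ is itself a finitely generated projective submodule of $K$'s closure, we get $\dim_R(\cl_{\bnd}(K))\geq \dim_R(P)$ for every finitely generated projective $P\subseteq K$, because $P\subseteq P_0$ means $P$ is a direct summand of $P_0$ (as $P$ is projective and $P_0/P$ embeds, after closure, appropriately — or more directly, $P$ is a direct summand of $P_0$ by condition (8) applied inside $P_0$), and then $\dim_R(P)\leq \dim_R(P)+\dim_R(P_0/P)=\dim_R(P_0)$ by (L1)(ii) already proved. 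Conversely, $P_0$ itself is a finitely generated projective submodule — but not of $K$, only of $\cl_{\bnd}(K)$. To get the supremum over submodules of $K$ to reach $\dim_R(P_0)$, I would argue that $\cl_{\bnd}(K)/K$ is a bounded (hence torsion) module, so for any finitely generated projective $P_0'\subseteq P_0$ the intersection $P_0'\cap K$ is essential in $P_0'$ in the relevant sense; using continuity of $d$ under directed suprema (property (D10)) and the fact that $K$ contains a directed family of finitely generated projective submodules whose closures exhaust $P_0$, one concludes $\sup\{\dim_R(P)\mid P\subseteq K \text{ f.g. proj.}\}=\dim_R(P_0)$.

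The main obstacle I anticipate is precisely this last step of (L2): showing that the finitely generated projective submodules of $K$ are ``cofinal enough'' inside $\cl_{\bnd}(K)$ to recover its full dimension. This requires combining the structure theory (Theorem \ref{SplittingOfBnd}, which tells us $\cl_{\bnd}(K)$ is projective and that $\bnd$ and $\T$ agree on finitely generated modules) with the continuity property (D10)/(D11) of the dimension function $d$, and care is needed because $K$ need not be finitely generated. The analogue of this argument for finite von Neumann algebras is exactly L\"uck's original verification, and for the Baer $*$-ring case it is \cite[Theorem 11 and surrounding material]{Lia_Baer}; the present proof should follow that template, with the only new ingredient being the well-definedness discussion in Step 1, which rests on assumptions (ii) and (iii).
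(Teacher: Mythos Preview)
Your outline for (L1)(i) and (L1)(ii) is essentially the paper's argument (Lemma \ref{L1L2_lemma}(1),(2)): pad to a common matrix size, use that isomorphic projectives give similar projections (Rosenberg), invoke hypothesis (ii), then (D1); and for sums use orthogonality plus (D4).

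For (L2) your plan is on the right track but has one misattribution and one genuine gap.

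\textbf{Misattribution.} You repeatedly invoke ``right strongly semihereditary'' and ``condition (8)'' to conclude that a finitely generated projective $P\subseteq P_0$ is a direct summand. Condition (8) only says finitely generated projectives are \emph{extending}; that makes \emph{complements} summands, not arbitrary submodules. The correct hypothesis here is (iii): $R$ is regular, so every finitely generated submodule of a projective is a direct summand. This also gives $P=\cl_{\bnd}(P)$ inside $R^n$, which you need. (Theorem \ref{SplittingOfBnd}(1) does apply, since $R$ regular $\Rightarrow$ semihereditary and $M_n(R)$ is Baer by (i), so your appeal to it for ``$\cl_{\bnd}(K)$ is a summand'' is fine.)

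\textbf{The gap.} Your hard direction of (L2) is the claim ``$K$ contains a directed family of finitely generated projective submodules whose closures exhaust $P_0$'' and that therefore the supremum of their dimensions is $\dim_R(P_0)$. The first clause is easy (regularity again: the finitely generated submodules of $K$ are projective and form a directed system with union $K$), but the second does not follow from cofinality at the module level, since those submodules exhaust $K$, not $\cl_{\bnd}(K)$. The paper closes this gap by passing to the projection lattice of $M_n(R)$: for each $x\in K$ regularity gives a projection $p_x\in M_n(R)$ with $p_x(R^n)=xR\subseteq K$; one then proves $\cl_{\bnd}(K)=s(R^n)$ where $s=\sup\{\,p\ \text{projection}: p(R^n)\subseteq K\,\}$ (this is Lemma \ref{L1L2_lemma}(3), using Proposition \ref{VariousClosures}). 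Now (D10) applies \emph{to projections}, yielding $d(s)=\sup d(p)$, i.e.\ $\dim_R(\cl_{\bnd}(K))=\sup\{\dim_R(P): P\subseteq K\ \text{f.g.\ proj.}\}$. The point is that (D10) lives on the projection lattice, not on modules, so you must translate the problem there before invoking it; your sketch gestures at (D10) but never sets up a family of projections to which it can be applied.
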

\begin{remark}
\begin{enumerate}
\item In order to use Theorem \ref{LueckDimension}, we need to impose the conditions of Theorem \ref{dim_function_exists} to all matrix rings over $R$. Assumptions (i)--(iii) guarantee that. Assumption (i) may seem very restrictive at first, but later we show that it holds in most applications. Assumption (ii) clearly holds if $\sim$ is $\sima$ and in all rings in which $\sims$ is equal to $\sima$ (e.g. $AW^*$-algebras). Assumption (iii) is fulfilled in applications when considering the right maximal ring of quotients of a right nonsingular ring.

\item By adding $R$ in the subscript of $\dim$ we emphasize the ring we are working with. This will be essential later on when we shall work simultaneously with modules over a ring $R$ and modules over $\Qrmax(R).$

\item If $p$ is a projection of $M_n(R)$ and if $p(R^n)$ denotes the finitely generated projective module that is the image of left multiplication by $p,$ note that we have
\[\dim_R(p(R^n))=d(p).\]
In general, if $x$ is any element of $M_n(R)$ we let $x(R^n)$ denote the image of the left multiplication map $R^n\rightarrow R^n$ given by $r\mapsto xr.$

In step (1) of Proposition \ref{general_dim_on_Q}, note that for a finitely generated projective $P$ that is a direct summand of $R^n,$ an
idempotent matrix $q$ with image isomorphic to $P$ exists. Choose
$p$ to be the projection such that $p M_n(R)=\ann_r(1-q)=qM_n(R)$ (such $p$ exists because $M_n(R)$ is a Rickart $*$-ring). Thus $p(R^n)=q(R^n).$

\item Note also that the dimension functions on $M_m(R)$ and $M_n(R)$ agree for $m\geq n$ i.e. $d_m\upharpoonleft_{M_n(R)}=d_n$ for all
$m\geq n.$ This is because the dimension function on $R$ is determined by its values on central
projections. The centers of $R$ and $M_n(R)$ are isomorphic under the identification of
diag$(a,a,...,a)\in Z(M_n(R))$ with $a\in Z(R).$ If we identify
diag$(a,a,...,a)\in Z(M_n(R))$ with $na\in Z(R),$ we get the
desired result on the dimension functions.
Considering this and the previous remark, it is foreseeable why $\dim_R$ is well defined for finitely generated projective modules.

\item The fact that the dimension of a finitely generated projective module is not an integer but, in essence, a limit of scalar multiples of certain central projections, should not be considered as a drawback. In fact, such a dimension may be more telling in some situations. For example, all finite $AW^*$-algebras with infinite uniform dimension (e.g. group von Neumann algebras of infinite groups) have the same (infinite) Goldie reduced rank. The dimension that we consider in this paper is finite for those algebras, and thus may help distinguish their different submodules more effectively than the Goldie reduced rank. Similar examples also may be obtained by considering cases when the integer-valued Goldie reduced rank fails to distinguish different uniform modules. For example, consider the ring $R=\Cset\oplus\Cset[x,x^{-1}].$ Both direct summands of $R$ have uniform dimension 1. The dimension we consider distinguish these different summands since $\dim_R(\Cset)=1_{\Cset}$ and $\dim_R(\Cset[x,x^{-1}])=1_{\Cset[x,x^{-1}]}.$
\end{enumerate}
\end{remark}

By Theorem \ref{LueckDimension}, in order to prove Proposition \ref{general_dim_on_Q} it is sufficient to show that (L1) and (L2) hold if $R$ is a ring satisfying assumptions of Proposition \ref{general_dim_on_Q}. We prove that (L1) and (L2) hold in a series of claims collected in the following lemma. The proof of this lemma follows the arguments from \cite{Lia_Baer}.

\begin{lemma} Let $R$ be as in Proposition \ref{general_dim_on_Q}.

\begin{enumerate}
\item $P\cong S\;\; \mbox{ implies }\;\;\dim_R(P)=\dim_R(S),$
for all finitely generated projective
$R$-modules $P$ and $S.$ Moreover, if the equivalent projections of $M_n(R)$ are algebraically equivalent, then  $\dim_R(P)=\dim_R(S)$ implies $P\cong S.$

\item $\dim_R(P\oplus S)=\dim_R(P)+\dim_R(S),$ for all finitely generated projective
$R$-modules $P$ and $S.$

\item If $P$ is a submodule of $R^n,$ then
\[\dim_R(\cl_{\bnd}(P))=\sup\{\,d(p)\;|\; p\in M_n(R) \mbox{ is a projection with }p(R^n)\subseteq P\,\}\]
Thus, if $P$ is finitely generated projective also, then \[\dim_R(P)=\dim_R(\cl_{\bnd}(P)).\]

\item If $P$ and $S$ are finitely generated projective submodules of $R^n,$ then
\[P\subseteq S\mbox{ implies }\;\;\dim_R(P)\leq \dim_R(S).\]

\item If $K$ is a submodule of a finitely generated projective module $S,$ then
\[\dim_R(\cl_{\bnd}(K))=\sup\{\dim_R(P)\;|\; P\mbox { is a fin. gen. projective submodule of } K\}\]
\end{enumerate}
\label{L1L2_lemma}
\end{lemma}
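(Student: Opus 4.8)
The plan is to establish the five claims essentially in the order they are stated, using the dimension function $d$ on projections of $M_n(R)$ provided by Theorem \ref{dim_function_exists} together with the hypotheses (i)--(iii) of Proposition \ref{general_dim_on_Q}. For (1), suppose $P$ and $S$ are finitely generated projective with $P\cong S$; realize both as images $p(R^n)$ and $p'(R^n)$ for projections $p,p'\in M_n(R)$ (enlarging $n$ by Remark 4.21.4 so that both live in the same matrix ring). An isomorphism $P\cong S$ lifts to an isomorphism of the corresponding direct summands of $R^n$, hence to idempotents $q\sim^a q'$ that are algebraically equivalent; since $M_n(R)$ is $*$-regular, $p$ and $p'$ are the projections generating $qM_n(R)$ and $q'M_n(R)$, and a standard argument (algebraically equivalent idempotents generate algebraically equivalent projections in a Rickart $*$-ring) gives $p\sima p'$. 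By hypothesis (ii) — which, as the remark notes, holds in particular when $\sim$ is $\sima$ — one deduces $p\sim p'$, so $d(p)=d(p')$ by (D1), i.e. $\dim_R(P)=\dim_R(S)$. For the converse under the extra assumption that $\sim$-equivalent projections are $\sima$-equivalent: $\dim_R(P)=\dim_R(S)$ means $d(p)=d(p')$, so $p\sim p'$ by (D8), hence $p\sima p'$ by assumption, and algebraically equivalent projections in a Rickart $*$-ring have isomorphic image modules, giving $P\cong S$. Claim (2) is then immediate: $P\oplus S$ is realized by the orthogonal sum $p\oplus p'$ inside $M_{n+m}(R)$ (after conjugating to make the projections orthogonal), and (D4) gives $d(p\oplus p')=d(p)+d(p')$.

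For (3), the key is Proposition \ref{VariousClosures}(3): since $M_n(R)$ is Baer (this follows from (i)), $\cl_{\bnd}(P)$ is a direct summand of $R^n$, say $\cl_{\bnd}(P)=p_0(R^n)$ for a projection $p_0$. One inequality is clear: any projection $p$ with $p(R^n)\subseteq P\subseteq\cl_{\bnd}(P)$ satisfies $p\preceq p_0$ (the image of $p$ is a direct summand contained in the direct summand $p_0(R^n)$), so $d(p)\le d(p_0)=\dim_R(\cl_{\bnd}(P))$ by (D9); hence the supremum on the right is $\le\dim_R(\cl_{\bnd}(P))$. For the reverse inequality one uses that $P$ is \emph{dense} in $\cl_{\bnd}(P)$ modulo a bounded module — more precisely, following the argument of \cite[Theorem 11]{Lia_Baer}, one shows $\cl_{\bnd}(P)/P$ is bounded and $\cl_{\bnd}(P)$ is generated over $R$ (up to the direct-summand structure) by finitely generated projective submodules of $P$ whose images are projections approaching $p_0$ in the lattice $C_{[0,\infty)}(X)$; by the continuity property (D10) of $d$ these dimensions have supremum $d(p_0)$. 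Taking $p_0$ itself when $P$ is already finitely generated projective (then $\cl_{\bnd}(P)=P$ by Proposition \ref{VariousClosures}, since a direct summand equals its $\bnd$-closure) yields $\dim_R(P)=\dim_R(\cl_{\bnd}(P))$.

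Claim (4) follows from (2) and (3): if $P\subseteq S$ are both finitely generated projective submodules of $R^n$, then $S=P\oplus P'$ for a complement $P'$ (both are direct summands since $M_n(R)$ is semihereditary, or by splitting the inclusion), so $\dim_R(S)=\dim_R(P)+\dim_R(P')\ge\dim_R(P)$ by (D2)/(D5) positivity. Finally (5) is the general case of (3): given $K\subseteq S$ with $S$ finitely generated projective, embed $S$ as a direct summand of some $R^n$; then $\cl_{\bnd}^S(K)=\cl_{\bnd}^{R^n}(K)\cap S$ and by (3), $\dim_R(\cl_{\bnd}(K))$ equals the supremum of $d(p)$ over projections $p$ with $p(R^n)\subseteq K$, and each such $p(R^n)$ is a finitely generated projective submodule of $K$, while conversely every finitely generated projective submodule of $K$ is $q(R^n)$ for a projection $q$ (using $*$-regularity again to pass from idempotent to projection); monotonicity from (4) makes the two suprema agree.

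**Main obstacle.** The delicate point is the reverse inequality in (3) — showing that the dimension of the bounded-closure $\cl_{\bnd}(P)$ is actually \emph{approximated from below} by dimensions of finitely generated projective submodules of $P$ itself, rather than merely bounded above by $d(p_0)$. This requires genuinely using the structure of the $\bnd$-torsion theory (that $\cl_{\bnd}(P)/P$ is bounded, hence annihilated by "enough" maps to $R$) combined with the continuity property (D10) of the dimension function, and is precisely where the argument of \cite[Theorem 11]{Lia_Baer} must be adapted; the $*$-regularity hypothesis (iii) is what lets one replace the relevant idempotents by projections throughout so that $d$ applies.
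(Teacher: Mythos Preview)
Your overall structure matches the paper's, and parts (2) and (5) are essentially the same. Two points deserve comment.

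For (1), you argue that $P\cong S$ gives $p\sima p'$ and then invoke hypothesis (ii) to conclude $p\sim p'$. But (ii) says that \emph{similar} projections are $\sim$-equivalent, not that $\sima$-equivalent ones are; as written this is a gap. The paper avoids it by going directly to similarity: by \cite[Lemma 1.2.1]{Rosenberg}, $P\cong S$ implies that after padding with zeros to land in a common $M_n(R)$ the associated projections $p_n,s_n$ satisfy $u p_n=s_n u$ for some invertible $u$, i.e.\ they are similar, and then (ii) applies. Your route is salvageable by the same stabilization trick (algebraically equivalent idempotents become similar after enlarging), but you did not say this.

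For (3) you correctly locate the main obstacle, but your sketch --- ``$\cl_{\bnd}(P)$ is generated \dots\ by finitely generated projective submodules of $P$ whose images are projections approaching $p_0$'' --- is vague and is not how the paper proceeds. The paper's argument is sharper and involves no approximation: set $s=\sup\{p : p\in M_n(R)\text{ a projection with }p(R^n)\subseteq P\}$ and let $r$ be the projection with $r(R^n)=\cl_{\bnd}(P)$; one proves $s=r$ \emph{exactly}. That $s\le r$ is clear. For $r\le s$ it suffices to show $P\subseteq s(R^n)$, since $s(R^n)$ is then a direct summand of $R^n$ containing $P$ and hence contains $\cl_{\bnd}(P)$ by Proposition~\ref{VariousClosures}. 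The key step uses regularity (hypothesis (iii)) concretely: for each $x\in P$ form the matrix $X\in M_n(R)$ whose first column is $x$ and whose other columns are zero; since $M_n(R)$ is a regular Rickart $*$-ring there is a projection $p_x$ with $XM_n(R)=p_xM_n(R)$, hence $xR=X(R^n)=p_x(R^n)\subseteq P$, so $p_x\le s$ and $x\in p_x(R^n)\subseteq s(R^n)$. Once $s=r$, property (D10) gives $\dim_R(\cl_{\bnd}(P))=d(r)=d(s)=\sup d(p)$ in one stroke. For the last sentence of (3), the reason $P=\cl_{\bnd}(P)$ when $P$ is finitely generated projective is that $R$ is \emph{regular}, so $P$ is already a direct summand of $R^n$; your appeal to Proposition~\ref{VariousClosures} alone does not supply this.

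Your proof of (4) via a splitting $S=P\oplus P'$ is a legitimate alternative (it works because $R$ is regular, so $S/P$ is finitely presented hence projective and the inclusion splits); the paper instead uses (3) to pass to closures and compares the associated projections $p\le s$ directly via (D9).
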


\begin{proof}
(1) Let $P$ and $S$ be two finitely generated projective modules and assume that $P\cong S.$ Let $p$ and $s$ be projections such that
$\dim_R(P)=d(p)$ and $\dim_R(S)=d(s).$ In case that $p$ and $s$ are
matrices of different size, we let $n$ be a positive integer $n$ such that
\[p_n = \left[\begin{array}{cc} p & 0\\
0 & 0 \end{array}\right] \mbox{ and } s_n = \left[\begin{array}{cc} s & 0\\
0 & 0 \end{array}\right] \] are both in $M_n(R)$ and there is an
invertible matrix $u\in M_n(R)$ such that $u p_n=s_n u$ (see \cite[Lemma
1.2.1]{Rosenberg} for details). Thus $p_n$ and $s_n$ are similar. But then $p_n\sim s_n$ by assumption (ii). Thus, $\dim_R(P)=d(p)=d(p_n)=d(s_n)=d(s)=\dim_R(S).$

If we assume that $\sim$ implies $\sima,$ the converse holds as well. Assume that $\dim_R(P)=\dim_R(S).$ Then
$d(p_n)=d(p)=d(s)=d(s_n)$ (we might have to enlarge $p$ and $s$
again). So $p_n\sim s_n.$ Then $p_n\sima s_n$ by assumption and so im$ p_n$ is isomorphic to im$ s_n.$ But
then $P$ is isomorphic to $S.$

(2) Let $P$ and $S$ be finitely generated projective modules with
$p$ and $s$ projections such that $\dim_R(P)=d(p)$ and
$\dim_R(S)=d(s)$ again. Then we can use
\[p\oplus s = \left[\begin{array}{cc} p & 0\\
0 & s \end{array}\right] \] to compute the dimension of $P\oplus
S.$ There is an integer $n$ such that \[
p_n = \left[\begin{array}{cc} p & 0\\
0 & 0 \end{array}\right] \mbox{ and } s_n = \left[\begin{array}{cc} 0 & 0\\
0 & s \end{array}\right] \] are both in $M_n(R).$ Then, $p_n
s_n=s_n p_n=0$ and so $\dim_R(P\oplus S)=d(p\oplus
s)=d(p_n+s_n)=d(p_n)+d(s_n)=d(p)+d(s).$

(3) If $s=\sup\{ p | p\in M_n(R)$ is a projection with $p(R^n)\subseteq
P\},$ first we show that
\[\cl_{\bnd}(P) = s(R^n).\] Note that $\cl_{\bnd}(P)$ is a direct summand of $R^n$ by part (3) of Proposition \ref{VariousClosures}.
Let $r$ denote the projection such that $\cl_{\bnd}(P)=r(R^n).$ We claim that
$s=r.$

If $p$ is any projection with $p(R^n)\subseteq P\subseteq \cl_{\bnd}(P)=r(R^n)$ then $p\leq r.$ Thus $s\leq r.$
For the converse, it is sufficient to show $P\subseteq s(R^n)$ since
then $s(R^n)\supseteq\inf\{q | q\in M_n(R)$ a projection with
$P\subseteq q(R^n) \}(R^n)=\cl_{\bnd}(P)=r(R^n)$ by Proposition
\ref{VariousClosures} and thus we obtain $s\geq r.$ So, let $x\in P.$
Consider a matrix $X\in M_n(R)$ such that the entries in the first
column are coordinates of $x$ in the standard basis and the
entries in all the other columns equal zero. Since $M_n(R)$ is a
regular Rickart $*$-ring, there is a projection
$p_x\in M_n(R)$ such that $X M_n(R)= p_x M_n(R)$ and thus $X(R^n)=p_x(R^n).$ Then $x\in
X(R^n)=x R\subseteq P$ and so we have that $p_x(R^n)\subseteq P$
for all $x\in P.$ So, $p_x\leq s$ for all $x\in P.$ Thus, $x\in
p_x(R^n)\subseteq s(R^n)$ for all $x\in P$ and so $P\subseteq
s(R^n).$

Now it is easy to see that
\[\begin{array}{rcl}\dim_R(\cl_{\bnd}(P)) & = & \dim_R(\sup\{ p | p\in
M_n(R)\mbox{ is a projection with }p(R^n)\subseteq P\}(R^n))\\
& = & d(\sup\{ p | p\in M_n(R)\mbox{ is a projection with
}p(R^n)\subseteq P\})\\
& = & \sup\{ d(p) | p\in M_n(R)\mbox{ is a projection with
}p(R^n)\subseteq P\}\end{array}\] by property (D10) of Theorem
\ref{dim_function_exists}.

The last sentence in part (3) now follows too since $P=\cl_{\bnd}(P)$ for any finitely generated projective $P$ that is a submodule of $R^n.$ The inclusion $\subseteq$ always holds. The converse holds since $R$ is regular, so any finitely generated projective submodule of $R^n$ is a direct summand. Thus $P$ is a direct summand of $R^n$ and then it contains $\cl_{\bnd}(P)$ by parts (1) and (3) of Proposition \ref{VariousClosures}.

(4) Let $p$ be a projection such that $p(R^n)=\cl_{\bnd}(P)$ and
$s$ a projection with $s(R^n)=\cl_{\bnd}(S).$ The inclusion $P\subseteq S$
implies $p(R^n)=\cl_{\bnd}(P)\subseteq \cl_{\bnd}(S)=s(R^n).$
Thus, $p\leq s$ and so $d(p)\leq d(s).$ Hence
\[\dim_R(P)=\dim_R(\cl_{\bnd}(P))=d(p)\leq
d(s)=\dim_R(\cl_{\bnd}(S))=\dim_R(S)\] by part (3).

(5)  Since $S$ is finitely generated projective, there is a nonnegative integer $n$
such that $S$ is a direct summand of $R^n.$ The module $\cl^S_{\bnd}(K)$ is a
direct summand of $S$ by Theorem \ref{SplittingOfBnd}. Thus, it is a direct summand of $R^n$ as well and so
$\cl^{R^n}_{\bnd}(K)\subseteq \cl^S_{\bnd}(K)$ by Proposition
\ref{VariousClosures}. Since $S\subseteq R^n$ implies
$\cl^S_{\bnd}(K)\subseteq \cl^{R^n}_{\bnd}(K),$ we have that $\cl^S_{\bnd}(K) = \cl^{R^n}_{\bnd}(K).$ So we can consider $R^n$ instead of $S.$

\[\begin{array}{rcl}
\dim_R(\cl^{R^n}_{\bnd}(K)) & = & \sup\{d(p)\;|\; p\mbox { is a
projection in }M_n(R)\mbox{ with }p(R^n)\subseteq K\}\\
& \leq & \sup\{\dim_R(P)\;|\; P\mbox { is a fin. gen. proj.
submodule of } K\}\end{array}\] where the first equality holds by part (3).

Conversely,
\[\begin{array}{lcr}
\sup\{\dim_R(P)\;|\; P\mbox { is a fin. gen. projective submodule
of } K\} & \leq & \\
\sup\{\dim_R(P)\;|\; P\mbox { is a fin. gen. projective submodule
of } \cl^{R^n}_{\bnd}(K)\} & = & \\
\dim_R(\cl^{R^n}_{\bnd}(K)). & &
\end{array}\]
For the last equality: $\leq$ holds by monotony for the dimensions of
finitely generated projective modules by part (4). The converse follows
since $\cl^{R^n}_{\bnd}(K)$ is finitely generated projective by
Theorem \ref{SplittingOfBnd}.
\end{proof}

\begin{proof}{\it [Proposition \ref{general_dim_on_Q}]}
The dimension function $d$ on $M_n(R)$ exists by Theorem \ref{dim_function_exists}. It defines the dimension $\dim_R$ that satisfies properties (L1) and (L2) by Lemma \ref{L1L2_lemma}. Thus, Proposition \ref{general_dim_on_Q} holds by Theorem \ref{LueckDimension}.
\end{proof}

We prove the main result of this section now. Assumptions of this result may again look severe, but they hold for the class of positive definite strongly semihereditary $*$-rings 
as Theorem \ref{strongly_semihereditary_has_dim} will demonstrate. Also, note that we do not assume the ring in Theorem \ref{general_dim_on_R} to be a Baer $*$-ring or to satisfy any of Kaplansky's axioms. In fact, we do not assume it to be involutive (though we assume that the regular overring is involutive).

\begin{theorem}
Let $R$ be a right semihereditary ring such that $M_n(R)$ is Baer for every $n$ (alternatively, it is sufficient to assume that $R$ is right semihereditary such that $\cl_{\bnd}(K)$ is a direct summand of $P$ for every $K\leq P$ and $P$ finitely generated projective).
Let $R$ embed in a ring $Q$ such that the following holds.
\begin{itemize}
\item[(a)] $Q$ satisfies assumptions of Proposition \ref{general_dim_on_Q}.
\item[(b)] There is a correspondence $\mu$ between finitely generated projective modules of $R$ and $Q$ such that for every finitely generated projective $R$-module $P,$ $\mu$ defines a bijection of the set of the direct summands of $P$ onto the set of the direct summands of $\mu(P).$ Moreover, for any finitely generated submodule $K$ of $P,$ $\mu(K)=\mu(\cl_{\bnd}(K)).$
\item[(c)] The map $\mu$ preserves taking isomorphic images, submodules and direct sums and $\mu(R)=Q.$
\item[(d)] By part (b), there is an inverse map $\mu^{-1}$ defined on direct summands of every finitely generated projective $Q$-module. The map $\mu^{-1}$ preserves taking submodules.
\end{itemize}
Then, dimension $\dim_R$ can be defined for all finitely generated projective $R$-modules $P$ by
\[\dim_R(P)=\dim_Q(\mu(P))\]
where $\dim_Q$ is dimension on $Q$ that can be defined by assumption (a). The dimension $\dim_R$
can be extended to all $R$-modules by
\[\dim_R(M)=\sup \{ \dim_R(P)\; |\; P \mbox{
fin. gen. projective submodule of }M\}.\]
The function $\dim_R$ satisfies conditions (L1) and (L2) of Theorem \ref{LueckDimension}, and thus $\dim_R$ satisfies properties (1)--(6) of  Theorem \ref{LueckDimension}.
\label{general_dim_on_R}
\end{theorem}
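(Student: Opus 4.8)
The plan is to verify that the function $P\mapsto\dim_R(P)=\dim_Q(\mu(P))$ on finitely generated projective $R$-modules satisfies conditions (L1) and (L2) of Theorem \ref{LueckDimension}; the theorem (in the $C_{[0,\infty)}(X)$-valued form discussed after its statement) then gives properties (1)--(6) for the extension $\dim_R(M)=\sup\{\dim_R(P)\mid P\text{ f.g.\ projective submodule of }M\}$, which is exactly the dimension defined in the statement. The guiding principle is that $\dim_R$ is $\dim_Q$ transported along $\mu$, and that $\dim_Q$ already enjoys (L1), (L2) and hence (1)--(6) of Theorem \ref{LueckDimension} because $Q$ satisfies the hypotheses of Proposition \ref{general_dim_on_Q}. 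First I would note that $\dim_R$ is well defined and satisfies (L1): if $P\cong P'$ then $\mu(P)\cong\mu(P')$ by (c), so $\dim_Q(\mu(P))=\dim_Q(\mu(P'))$ by (L1)(i) for $\dim_Q$; and $\mu(P\oplus P')=\mu(P)\oplus\mu(P')$ by (c), so additivity of $\dim_R$ follows from additivity of $\dim_Q$.

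For (L2), fix a finitely generated projective $R$-module $P$ and a submodule $K\le P$. By the hypothesis on $R$ (the Baer condition on every $M_n(R)$ via Theorem \ref{SplittingOfBnd}, or the stated alternative), $\cl_{\bnd}(K)$ is a direct summand of $P$, hence finitely generated projective, so $\dim_R(\cl_{\bnd}(K))=\dim_Q(\mu(\cl_{\bnd}(K)))$ is defined. Since $R$ is right semihereditary, every finitely generated submodule of $K$ is projective; thus the finitely generated projective submodules of $K$ are precisely its finitely generated submodules $K_0$, and for each of these condition (b) gives $\dim_R(K_0)=\dim_Q(\mu(K_0))=\dim_Q(\mu(\cl_{\bnd}(K_0)))$. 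So (L2) reduces to the identity
\[\dim_Q(\mu(\cl_{\bnd}(K)))=\sup\{\,\dim_Q(\mu(K_0))\mid K_0\le K\text{ finitely generated}\,\}.\]

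To establish this I would argue as follows. By (b) the map $\mu$ restricts to a bijection between the direct summands of $P$ and those of $\mu(P)$, and this bijection is order preserving in both directions by (c) and (d). Under the running hypotheses the direct summands of a finitely generated projective module over $R$ coincide with its $(\bnd,\unb)$-closed submodules (Proposition \ref{VariousClosures}(3)), and the same holds over $Q$ since $M_n(Q)$ is Baer; so both lattices are complete and $\mu$ induces an isomorphism of complete lattices, in particular preserving arbitrary suprema. The submodules $\cl_{\bnd}(K_0)$, for $K_0\le K$ finitely generated, form a directed family of direct summands of $P$ with supremum $\cl_{\bnd}(K)$: each lies in $\cl_{\bnd}(K)$, while the supremum contains $\bigcup_{K_0}\cl_{\bnd}(K_0)\supseteq K$ and, being $(\bnd,\unb)$-closed, contains $\cl_{\bnd}(K)$. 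Applying $\mu$ and using (b) once more,
\[\mu(\cl_{\bnd}(K))=\sup_{K_0}\mu(\cl_{\bnd}(K_0))=\sup_{K_0}\mu(K_0)=\cl_{\bnd}^{\mu(P)}\Bigl(\,\bigcup_{K_0}\mu(K_0)\,\Bigr).\]
Now $\bigcup_{K_0}\mu(K_0)$ is a directed union of submodules of the finitely generated module $\mu(P)$, so the Continuity and Cofinality properties of $\dim_Q$ (parts (4) and (3) of Theorem \ref{LueckDimension}, which hold for $\dim_Q$ by Proposition \ref{general_dim_on_Q}) yield $\dim_Q(\mu(\cl_{\bnd}(K)))=\dim_Q(\bigcup_{K_0}\mu(K_0))=\sup_{K_0}\dim_Q(\mu(K_0))$, which is the displayed identity. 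With (L1) and (L2) in hand, Theorem \ref{LueckDimension} completes the proof and also gives the uniqueness clause.

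The step I expect to be the main obstacle is this last one in (L2): recovering $\dim_R(\cl_{\bnd}(K))$ as the supremum of $\dim_R(P')$ over finitely generated projective submodules $P'$ of $K$, despite the fact that $K$ need not contain $\cl_{\bnd}(K)$. The resolution hinges on transporting the problem to $Q$, where one must trade a supremum in the lattice of direct summands for the dimension of a directed union of submodules; this is exactly what the combination of ``$\mu$ is a completeness-preserving lattice isomorphism on direct summands'' and ``$\dim_Q$ is continuous and cofinal'' provides. A minor point worth isolating is uniformity over the two permitted hypotheses on $R$: in both cases the only feature used is that the $(\bnd,\unb)$-closed submodules of a finitely generated projective $R$-module form a complete lattice equal to its lattice of direct summands, and that the analogue holds over $Q$.
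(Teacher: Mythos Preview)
Your proof is correct and follows essentially the same route as the paper's: both verify (L1) via assumptions (a) and (c), and for (L2) both identify $\mu(\cl_{\bnd}(K))$ with the supremum, in the lattice of direct summands of $\mu(P)$ (equivalently of $Q^n$), of the images $\mu(K_0)$ of the finitely generated submodules of $K$, and then read off the dimension identity. The only difference is cosmetic: the paper works with projections in $M_n(Q)$ and invokes property (D10) to pass from the supremum of projections to the supremum of their dimensions, whereas you package the same step as a lattice isomorphism combined with the Continuity and Cofinality of $\dim_Q$ (already available from Proposition~\ref{general_dim_on_Q}); your observation that both the Baer hypothesis and the stated alternative yield the completeness of the lattice of direct summands is also handled implicitly in the paper via Proposition~\ref{VariousClosures} and Theorem~\ref{SplittingOfBnd}.
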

\begin{proof}
To prove the theorem it is again sufficient to check that conditions (L1) and (L2) hold. The property (L1) is satisfied by  assumptions (a) and (c).

To check that (L2) holds, let $K$ be a submodule of a finitely generated projective module $S$. If $S$ is a direct summand of $R^n$ for a nonnegative integer $n,$ we can use the same argument as in the proof of part (5) of Lemma \ref{L1L2_lemma} to show that
$\cl^S_{\bnd}(K)=\cl^{R^n}_{\bnd}(K).$  Hence, we can work in $R^n.$ In the rest of the proof, we let
$\cl_{\bnd}$ denote $\cl^{R^n}_{\bnd}.$

Let $s$ be the supremum of an increasingly directed family of projections $p$ in $M_n(Q)$ such that $ p(Q^n)=\mu(P)$ for all finitely generated projective modules $P$ that are submodules of $K.$ Let $S'=s(Q^n).$ Note that $d(s)=\sup d(p)$ by property (D10) and so $\dim_Q(S')=d(s)=\sup d(p) =\sup\dim_Q(\mu(P))=\sup\dim_R(P)$ where the supremum is taken over all finitely generated projective modules $P\leq K.$ Every such submodule $P$ determines a direct summand $\mu(\cl_{\bnd}(P))$ of $Q^n$ that is a submodule of $\mu(\cl_{\bnd}(K)).$ Note that here we are using assumption (c). Thus, $\dim_Q(S')=\sup\dim_R(P)\leq \sup\dim_Q(T)$ where the supremum is taken over all finitely generated projective modules $T$ in $Q^n$ with $T\leq \mu(\cl_{\bnd}(K)).$ This last supremum is equal to $ \dim_Q(\mu(\cl_{\bnd}(K)))$ by definition of $\dim_Q$ and $\dim_Q(\mu(\cl_{\bnd}(K)))=\dim_R(\cl_{\bnd}(K))$ by definition of $\dim_R.$ Thus,
\[\dim_R(\cl_{\bnd}(K))\geq\sup\{\dim_R(P)\;|\; P\mbox { is a fin. gen. projective submodule of } K\}\]

To prove the converse, let $s_K$ be the supremum of projections $p_x\in M_n(Q)$ such that $p_x(Q^n)=\mu(xR)$ for $x\in K.$ Note that for every $x\in K,$ the finitely generated module $xR$ is projective since $R$ is right semihereditary. Let $S_K=s_K(Q^n).$

By construction, $S_K\leq S'.$ For $x\in K,$ $\mu(\cl_{\bnd}(xR))=\mu(xR)\leq S_K$ by assumption (b). Assumption (d) then ensures that $\cl_{\bnd}(xR)\leq \mu^{-1}(S_K).$ Hence $x\in xR\leq \cl_{\bnd}(xR)\leq \mu^{-1}(S_K)$ for every $x\in K.$ Thus, $K\leq \mu^{-1}(S_K).$ But then $\cl_{\bnd}(K)\leq \mu^{-1}(S_K)$ by Proposition \ref{VariousClosures}.
We obtain then that $\mu(\cl_{\bnd}(K))\leq S_K\leq S'$ by assumption (c). Thus
\[\dim_R(\cl_{\bnd}(K))=\dim_Q(\mu(\cl_{\bnd}(K)))\leq \dim_Q(S')=\sup\{\dim_R(P)| P\leq K\mbox{ fin. gen. proj.}\}.\]
\end{proof}

\begin{definition}
If $R$ satisfies assumptions of Theorem \ref{LueckDimension} with $[0, \infty)$ possibly replaced by the algebra of continuous non-negative functions $C_{[0, \infty)}(X)$ on a Stonian space $X,$ we say that $R$ {\em has dimension for $\sim$}.
\end{definition}

\begin{corollary} Let $R$ satisfy assumptions of Theorem \ref{general_dim_on_R} and dimension $\dim_R$ on $R$ be defined via dimension $\dim_Q$ on a regular overring $Q$ using a map $\mu$ as in Theorem \ref{general_dim_on_R}. If the map $\mu$ extends to a functor on all $R$-modules that is exact and agrees with direct limits, then
\[\dim_Q(\mu(M))=\dim_R(M)\]
for any right $R$-module $M.$
\label{dimQ_versus_dimR}
\end{corollary}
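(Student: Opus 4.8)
The plan is to reduce the statement to its behavior on finitely generated projective modules and then propagate through the two extension steps (supremum over finitely generated projective submodules, and exactness/continuity) that both $\dim_R$ and $\dim_Q\circ\mu$ inherit. First I would observe that for a finitely generated projective $R$-module $P$ the equality $\dim_Q(\mu(P))=\dim_R(P)$ is \emph{definitional}: it is exactly how $\dim_R$ was defined on such modules in Theorem~\ref{general_dim_on_R}. So the content is entirely about how the two sides behave as one extends from finitely generated projectives to arbitrary modules.

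Next I would exploit that both $\dim_R$ and the composite $M\mapsto\dim_Q(\mu(M))$ satisfy properties (1)--(4) of Theorem~\ref{LueckDimension}, and invoke the uniqueness clause (property (6) of that theorem). Concretely: $\dim_R$ satisfies (1)--(4) by Theorem~\ref{general_dim_on_R}. For the composite, Extension holds by the definitional remark above; Additivity holds because $\mu$ is assumed exact, so a short exact sequence $0\to M_0\to M_1\to M_2\to 0$ of $R$-modules maps to a short exact sequence of $Q$-modules, and $\dim_Q$ is additive by Proposition~\ref{general_dim_on_Q}; Cofinality holds because $\mu$ agrees with direct limits (directed unions being directed colimits), so $\mu(\bigcup M_i)=\bigcup\mu(M_i)$ and $\dim_Q$ is cofinal; Continuity holds because $\dim_Q$ is continuous on $Q$ and — here is the key compatibility — one needs $\mu(\cl_{\bnd}^{M}(K))$ to be, up to the relevant equality of dimensions, the $\bnd$-closure of $\mu(K)$ inside $\mu(M)$, at least for finitely generated $M$. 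Since the uniqueness in Theorem~\ref{LueckDimension} says any function satisfying (1)--(4) equals $\dim_R$, this forces $\dim_Q(\mu(M))=\dim_R(M)$ for all $M$.

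The step I expect to be the main obstacle is verifying Continuity for the composite, i.e. that $\dim_Q(\mu(\cl_{\bnd}^M(K)))=\dim_Q(\mu(K))$ when $K\leq M$ with $M$ finitely generated. For $M$ finitely generated projective this is already built into assumption (b) of Theorem~\ref{general_dim_on_R} (namely $\mu(K)=\mu(\cl_{\bnd}(K))$), so one would first reduce the general finitely generated case to the projective case: choose an epimorphism $f\colon R^n\twoheadrightarrow M$, pull $K$ back to $f^{-1}(K)\leq R^n$, use that $\cl_{\bnd}$ commutes with such preimages (as in the proof of Theorem~\ref{SplittingOfBnd}), apply exactness of $\mu$ to transfer the closure relation across $f$, and conclude. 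Alternatively — and perhaps more cleanly — one can bypass checking (1)--(4) for the composite entirely and argue directly: for arbitrary $M$, $\dim_R(M)=\sup\{\dim_R(P)\mid P\leq M\text{ f.g.\ proj.}\}=\sup\{\dim_Q(\mu(P))\}$, and since $\mu$ preserves submodules and finitely generated projectives and agrees with direct limits, the $\mu(P)$ are cofinal among finitely generated projective submodules of $\mu(M)$ (one writes $\mu(M)$ as the directed union of the $\mu(P)$, using that $M$ is the directed union of its finitely generated, hence projective-in-the-semihereditary-setting, submodules and that $\mu$ commutes with this), whence the supremum equals $\dim_Q(\mu(M))$ by the definition of $\dim_Q$ on arbitrary modules. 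I would write up this second, direct route as the main argument and mention the uniqueness route as an alternative.
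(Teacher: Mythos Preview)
Your direct route contains a gap: the claim that ``$M$ is the directed union of its finitely generated, hence projective-in-the-semihereditary-setting, submodules'' is false for arbitrary $M$. Right semihereditarity only guarantees that finitely generated submodules of \emph{projective} modules are projective; a finitely generated submodule of an arbitrary module (take $M$ itself to be a finitely generated torsion module) need not be projective. So you cannot write $\mu(M)$ as a directed union of the $\mu(P)$ with $P$ finitely generated projective in one stroke, and the cofinality argument collapses.

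The paper's proof is essentially your direct route done correctly, via a four-step bootstrap. First, the equality is definitional on finitely generated projectives. Second, for $M$ a submodule of a finitely generated projective, your argument works verbatim: here finitely generated submodules \emph{are} projective, so $M=\bigcup M_i$ with each $M_i$ finitely generated projective, $\mu(M)=\bigcup\mu(M_i)$ since $\mu$ commutes with direct limits, and cofinality of both $\dim_R$ and $\dim_Q$ gives the equality. Third, for $M$ finitely generated, write $0\to K\to P\to M\to 0$ with $P$ finitely generated projective; exactness of $\mu$ and additivity of $\dim_Q$ and $\dim_R$, together with the first two steps applied to $P$ and to $K$, yield the equality. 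Fourth, for arbitrary $M$, write $M$ as a directed union of its finitely generated submodules (not projective in general, but now handled by step three) and apply cofinality once more.

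Your uniqueness route would also work, but not as you have it: verifying Continuity for $\dim_Q\circ\mu$ is precisely the compatibility of $\mu$ with $\cl_{\bnd}$ that you flag as the obstacle and do not resolve. The four-step argument above in effect shows that Extension, Additivity, and Cofinality alone already pin down the extension, so Continuity can be bypassed --- but at that point you are reproducing the paper's direct argument inside the uniqueness wrapper.
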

\begin{proof} If $M$ is a finitely generated projective module,
this follows from Theorem \ref{general_dim_on_R}.

If $M$ is a submodule of any finitely generated projective
$R$-module, $M$ is a directed union of its finitely generated
modules $M_i,$ $i\in I$ that are projective since $R$ is right semihereditary. The modules $\mu(M_i)$ are finitely generated projective submodules of $\mu(M)$ by exactness of $\mu$. Thus,
\[\dim_R(M)=\sup_{i\in I} \dim_R(M_i)= \sup_{i\in I} \dim_Q(\mu(M_i))=\dim_Q(\mu(M))\] by cofinality of $\dim_R$ and $\dim_Q$
and the fact that $\mu$ commutes with direct limits.

If $M$ is a finitely generated $R$-module, then $M$ is a quotient
of some finitely generated projective module $P$ and its submodule $K$. Then,
$\dim_R(M)=\dim_R(P)-\dim_R(K)=\dim_Q(\mu(P))-\dim_Q(\mu(K))=\dim_Q(\mu(M))$ by additivity of
dimensions $\dim_R$ and $\dim_Q,$ by exactness of $\mu$ and by the previous step.

Finally, let $M$ be an arbitrary $R$-module. Consider $M$ as a directed union of its finitely generated submodules $M_i.$  By using cofinality of dimension and the previous case, we obtain that
$\dim_R(M)=\sup_{i\in I} \dim_R(M_i)= \sup_{i\in I} \dim_Q(\mu(M_i))=\dim_Q(\mu(M)).$
\end{proof}

\section{Dimension of strongly semihereditary $*$-rings}
\label{section_dimension_strongly_semihereditary}

In this section, first we show that all positive definite strongly semihereditary $*$-rings 
 have dimension for $\sima.$ We obtain this dimension by Theorem \ref{general_dim_on_R} with $\sim$ interpreted as $\sima$ and by using favorable properties of involutive strongly semihereditary rings from Section \ref{section_involutive_strongly_semihereditary}.

\begin{theorem}
Every strongly semihereditary $*$-ring with a positive definite involution has dimension for $\sima$.
\label{strongly_semihereditary_has_dim}
\end{theorem}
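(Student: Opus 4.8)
The plan is to deduce the theorem from Theorem~\ref{general_dim_on_R}, taking for the regular overring $Q$ the maximal right ring of quotients $\Qrmax(R)$, interpreting the abstract equivalence $\sim$ as algebraic equivalence $\sima$, and taking for $\mu$ the functor $P\mapsto P\otimes_R Q$ on finitely generated projective modules (and on their direct summands). So the work consists entirely of checking that the hypotheses of Theorem~\ref{general_dim_on_R} are satisfied in this situation.

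First I would assemble the structure theory. Since $R$ is a right strongly semihereditary $*$-ring, $R$ is right semihereditary and every $M_n(R)$ is Baer (as recorded in Section~\ref{section_strongly_semihereditary}), so the standing hypothesis of Theorem~\ref{general_dim_on_R} holds. By Proposition~\ref{strongly_semihereditary_involutive_rings} the involution extends to $Q=\Qrmax(R)=\Qlmax(R)=\Qrtot(R)=\Qltot(R)$; hence $Q$ is von Neumann regular and two-sided self-injective, and is itself strongly semihereditary by Proposition~\ref{properties}(3). Because the involution on $R$ is positive definite it is $n$-proper for all $n$, so Proposition~\ref{properties}(6) gives that $M_n(Q)$ is a $*$-regular Baer $*$-ring for every $n$; moreover $M_n(Q)$ is regular (matrix rings over regular rings are regular) and two-sided self-injective. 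I would then check that $Q$ meets hypotheses (i)--(iii) of Proposition~\ref{general_dim_on_Q} for $\sima$: (iii) is the regularity just noted; for (i), a regular Baer $*$-ring satisfies (Def)--(OA) and (P), hence (GC) and (Add), for $\sima$, while (Fin) for $\sima$ holds because a regular self-injective ring is unit-regular, hence directly finite; (ii) is automatic since similar projections are algebraically equivalent. Thus the dimension function $d$ of Theorem~\ref{dim_function_exists} exists on each $M_n(Q)$ and assembles, by Proposition~\ref{general_dim_on_Q}, into a dimension $\dim_Q$ on all $Q$-modules. This is assumption (a) of Theorem~\ref{general_dim_on_R}.

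Next I would verify assumptions (b)--(d) for $\mu(P)=P\otimes_R Q$. By Proposition~\ref{K0_theorem}(i), for a finitely generated projective $R$-module $P$ the map $N\mapsto N\otimes_R Q$ is a bijection from the direct summands of $P$ onto the direct summands of $\mu(P)=E(P)$, with inverse $L\mapsto L\cap P$; this gives the bijection required in (b), and since intersecting with $R^n$ preserves inclusions it also gives (d). Exactness of $-\otimes_R Q$ (here $Q$ is flat as a left $R$-module, being a perfect left ring of quotients) yields (c): $\mu$ preserves isomorphic images, injections and finite direct sums, and $\mu(R)=Q$. The remaining point of (b) is the identity $\mu(K)=\mu(\cl_{\bnd}(K))$ for a finitely generated submodule $K$ of a finitely generated projective $P$: here $K$ is finitely generated projective since $R$ is semihereditary, $\cl_{\bnd}(K)=\cl_{\T}(K)$ is a direct summand of $P$ by Theorem~\ref{SplittingOfBnd}(2) (hence also finitely generated projective), $K$ is essential in $\cl_{\T}(K)$ because $R^n$ is nonsingular, and tensoring a finitely generated projective module with $Q$ produces its injective envelope; therefore $K\otimes_R Q=E(K)=E(\cl_{\T}(K))=\cl_{\bnd}(K)\otimes_R Q$.

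With (a)--(d) in place, Theorem~\ref{general_dim_on_R} furnishes a dimension $\dim_R$ on all right $R$-modules satisfying (L1) and (L2), hence properties (1)--(6) of Theorem~\ref{LueckDimension}; this says precisely that $R$ has dimension for $\sima$. (A strongly semihereditary $*$-ring is in particular right strongly semihereditary, so this is all the argument needs.) I expect the main obstacle to be the bookkeeping in the second paragraph: confirming that \emph{all} of Kaplansky's axioms (Def)--(OA), (GC) and (Fin) hold for $\sima$ simultaneously on \emph{every} matrix ring $M_n(Q)$ --- this is exactly the point where positive definiteness of the involution is genuinely used, via Proposition~\ref{properties}(6) --- together with the verification of $\mu(K)=\mu(\cl_{\bnd}(K))$ in (b), which rests on the coincidence $\cl_{\bnd}=\cl_{\T}$ on finitely generated modules over a strongly semihereditary ring.
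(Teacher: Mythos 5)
Your proposal is correct and follows essentially the same route as the paper: both verify hypotheses (a)--(d) of Theorem~\ref{general_dim_on_R} for $Q=\Qrmax(R)$ and $\mu=-\otimes_R Q$, using Proposition~\ref{properties}(6) to obtain the Baer $*$-ring structure on $M_n(Q)$ (where positive definiteness enters), the regularity and self-injectivity of $M_n(Q)$ for Kaplansky's axioms for $\sima$, and Proposition~\ref{K0_theorem} for the bijection of direct summands. The only local difference is in verifying $\mu(K)=\mu(\cl_{\bnd}(K))$: you identify $K\otimes_R Q=E(K)=E(\cl_{\T}(K))=\cl_{\bnd}(K)\otimes_R Q$ directly from the essentiality of $K$ in its Goldie closure, whereas the paper shows that the quotient $(\cl_{\bnd}(K)\otimes_R Q)/(K\otimes_R Q)$ is simultaneously bounded and unbounded, hence zero --- your version is shorter and equally valid.
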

\begin{proof}
Let $R$ be a strongly semihereditary ring with a positive definite involution. Let $Q=\Qrmax(R).$ Thus $M_n(Q)=\Qrmax(M_n(R))$ is a regular ring since $M_n(R)$ is right nonsingular. Hence assumption (iii) of Proposition \ref{general_dim_on_Q} holds. Assumptions (ii) of Proposition \ref{general_dim_on_Q} is trivially satisfied since we consider $\sima.$

Let us check that assumption (i) of Proposition \ref{general_dim_on_Q} holds as well. Note that $R$ is a strongly semihereditary $*$-ring with $n$-proper involution and so $M_n(Q)$ is a Baer $*$-ring by part (6) of Proposition \ref{properties}. Recall that (Def), (CC), (IP) and (FA) hold for $\sima.$  The ring  $M_n(Q)$ is regular and so (OA) and (P) hold as well (see \cite[p. 47]{Kaplansky}). Thus (GC) and (Add) hold also (by \cite[Theorem 2.1]{Maeda_Holland}). Since $M_n(Q)$ is regular and self-injective, then (Fin) holds for $\sima$ as well (see \cite[Prop. 5.2 and Thm. 9.29]{Goodearl_book}).

Thus, $Q$ is a ring that has dimension for $\sima$ by Proposition \ref{general_dim_on_Q}.

Then, let us check that $R$ satisfies assumptions of Theorem \ref{general_dim_on_R}. First, note that $R$ is right semihereditary such that $M_n(R)$ is Baer for every $n.$ Furthermore, we have just seen that assumption (a) holds.

If $P$ is a finitely generated projective $R$-module, let us consider $\mu$ to be the map $S\mapsto S\otimes_R Q=E(S)$ for $S$ a direct summand of $P.$ By Proposition \ref{K0_theorem} this defines a bijective correspondence between direct summands of $P$ and $\mu(P).$ If $P$ is a direct summand of $R^n$, and $K$ is a finitely generated projective submodule of $P,$ it is sufficient to consider $K$ as a submodule of $R^n$ (using the same reasoning we noted in the proof of Theorem \ref{general_dim_on_R}).

We claim that $\mu(K)=K\otimes_R Q$ is equal to $\mu(\cl_{\bnd}(K))=\cl_{\bnd}(K)\otimes_R Q.$ The module $(\cl_{\bnd}(K)\otimes_R Q)/(K\otimes_R Q)$ is a finitely presented $Q$-module. Since $Q$ is regular, it is finitely generated projective. So, it is an unbounded $Q$-module. We claim it is a bounded module as well and hence has to be trivial. To show that $(\cl_{\bnd}(K)\otimes_R Q)/(K\otimes_R Q)$ is bounded, it is sufficient to show that $\cl_{\bnd}(K\otimes_R Q)=\cl_{\bnd}(K)\otimes_R Q.$ To prove one inclusion, note that $\cl_{\bnd}(K)\otimes_R Q$ is a direct summand of $Q^n$ that contains $K\otimes_R Q.$ Thus, it contains the smallest direct summand of $Q^n$ that contains $K\otimes_R Q$ and so we have $\cl_{\bnd}(K\otimes_R Q)\leq \cl_{\bnd}(K)\otimes_R Q.$

To show the converse, let $T$ be a direct summand of $Q^n$ that contains $K\otimes_R Q.$ Then $T\cap R^n$ is a direct summand of $R^n$ and it contains $(K\otimes_R Q)\cap R^n=\cl_{\T}(K)=\cl_{\bnd}(K)$ by Proposition \ref{K0_theorem}, Example \ref{Goldie_example} and Proposition \ref{VariousClosures}. Thus, $\cl_{\bnd}(K)\otimes_R Q$ is contained in $T=(T\cap R^n)\otimes_R Q.$ So, $\cl_{\bnd}(K)\otimes_R Q$ is contained in all direct summands of $Q^n$ that contain $K\otimes_R Q$ and so $\cl_{\bnd}(K)\otimes_R Q\leq \cl_{\bnd}(K\otimes_R Q).$ This shows that assumption (b) holds.

Assumption (c) holds by definition of the map $\mu$ and assumption (d) holds by the definition of the map $\mu^{-1}$ given by Proposition \ref{K0_theorem}.
\end{proof}

\begin{corollary} If $R$ is a strongly semihereditary ring with a positive definite involution and dimension $\dim^a_R$ for $\sima$, and $Q$ is $\Qrmax(R)$ with dimension $\dim^a_Q$ for $\sima,$ then
\[\dim^a_Q(M\otimes_R Q)=\dim^a_R(M)\]
for any right $R$-module $M.$
\label{dimQ_versus_dimR_for_stronglysemiher}
\end{corollary}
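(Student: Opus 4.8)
The plan is to invoke Corollary \ref{dimQ_versus_dimR}, which already establishes exactly the equality $\dim_Q(\mu(M))=\dim_R(M)$ for any right $R$-module $M$ whenever the map $\mu$ used to define $\dim_R$ from $\dim_Q$ extends to an exact functor on all $R$-modules that commutes with direct limits. So the only real work is to verify that, in the specific setting of Theorem \ref{strongly_semihereditary_has_dim}, the map $\mu$ we chose there does satisfy these two hypotheses, and that $\dim_Q$ as built in that proof is precisely $\dim^a_Q$ (the dimension of $Q=\Qrmax(R)$ for $\sima$) while $\dim_R$ is $\dim^a_R$.

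First I would recall that in the proof of Theorem \ref{strongly_semihereditary_has_dim} the map $\mu$ was taken to be $S\mapsto S\otimes_R Q$, initially defined on direct summands of finitely generated projective modules. The point is that this map is the restriction of the functor $-\otimes_R Q$ from right $R$-modules to right $Q$-modules. By Proposition \ref{right_strongly_semihereditary} (conditions (1), (10)), $Q=\Qrmax(R)=\Qrtot(R)$ is a perfect left ring of quotients of $R$, so $Q$ is flat as a left $R$-module; hence $-\otimes_R Q$ is an exact functor. Moreover, tensor product always commutes with arbitrary direct limits (colimits). Therefore $\mu$ extends to an exact functor agreeing with direct limits, which is exactly the hypothesis of Corollary \ref{dimQ_versus_dimR}.

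Next I would check the identification of the dimension functions. In Theorem \ref{strongly_semihereditary_has_dim} the dimension $\dim_Q$ on $Q$ is the one produced by Proposition \ref{general_dim_on_Q} with $\sim$ interpreted as $\sima$ (assumption (ii) there being trivial for algebraic equivalence), so $\dim_Q=\dim^a_Q$; and $\dim_R$ is then defined by $\dim_R(P)=\dim_Q(\mu(P))$ on finitely generated projectives and extended by the supremum formula, so $\dim_R=\dim^a_R$. With these identifications, Corollary \ref{dimQ_versus_dimR} yields $\dim^a_Q(M\otimes_R Q)=\dim^a_R(M)$ for every right $R$-module $M$, which is the assertion.

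The only mild subtlety — and the one place a referee might want a word of justification — is that Corollary \ref{dimQ_versus_dimR} is stated for ``the map $\mu$ as in Theorem \ref{general_dim_on_R}''; one must note that the $\mu$ used in the proof of Theorem \ref{strongly_semihereditary_has_dim} is indeed an instance of such a $\mu$ (this was verified there: assumptions (b), (c), (d) of Theorem \ref{general_dim_on_R} hold for $S\mapsto S\otimes_R Q$ via Proposition \ref{K0_theorem}), and that the functor extending it is $-\otimes_R Q$, whose exactness comes from left $R$-flatness of $Q$ and whose compatibility with directed unions/direct limits is standard. Once this is observed, no further computation is needed. I do not expect any genuine obstacle here; the corollary is essentially a formality given Corollary \ref{dimQ_versus_dimR}.

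\begin{proof}
By Proposition \ref{right_strongly_semihereditary}, $Q=\Qrmax(R)$ is a perfect left ring of quotients of $R$, so $Q$ is flat as a left $R$-module and the functor $-\otimes_R Q$ from right $R$-modules to right $Q$-modules is exact. It also commutes with direct limits, as tensor product always does. In the proof of Theorem \ref{strongly_semihereditary_has_dim}, the dimension $\dim^a_R$ was defined via the dimension $\dim^a_Q$ on the regular overring $Q$ using precisely the map $\mu\colon S\mapsto S\otimes_R Q$ (shown there to satisfy assumptions (b)--(d) of Theorem \ref{general_dim_on_R} by Proposition \ref{K0_theorem}), and $\dim^a_Q$ is the dimension produced by Proposition \ref{general_dim_on_Q} for $\sima$. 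Since $\mu$ extends to the exact functor $-\otimes_R Q$, which agrees with direct limits, Corollary \ref{dimQ_versus_dimR} applies and gives
\[\dim^a_Q(M\otimes_R Q)=\dim^a_R(M)\]
for every right $R$-module $M$.
\end{proof}
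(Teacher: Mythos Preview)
Your proposal is correct and follows essentially the same approach as the paper: the paper's proof simply notes that the result is a direct corollary of Theorem \ref{strongly_semihereditary_has_dim} and Corollary \ref{dimQ_versus_dimR}, since tensoring with $Q$ on the right gives a functor that is exact and commutes with direct limits. Your version spells out the justification for exactness (flatness of $Q$ as a left $R$-module via the perfect-quotient-ring property) and the identification $\dim_Q=\dim^a_Q$, $\dim_R=\dim^a_R$, but the argument is the same.
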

\begin{proof} This is a direct corollary of Theorem \ref{strongly_semihereditary_has_dim} and Corollary \ref{dimQ_versus_dimR} since right tensoring with $Q$ defines a functor that is exact and agrees with direct limits.
\end{proof}

As a corollary of Theorem \ref{strongly_semihereditary_has_dim}, we obtain dimension for $\sima$ of noetherian Leavitt path algebras over positive definite fields.

\begin{corollary} Let $K$ be a positive definite field and $E$ a finite no-exit graph. Then the Leavitt path algebra $L_K(E)$ has dimension for $\sima.$
\label{LPAs}
\end{corollary}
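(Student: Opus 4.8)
The plan is to simply combine the two main ingredients already in place: the general existence result for dimension of positive definite strongly semihereditary $*$-rings (Theorem \ref{strongly_semihereditary_has_dim}), and the fact that a noetherian Leavitt path algebra over a positive definite field fits into that framework. So the proof reduces to verifying that $L_K(E)$, for $K$ positive definite and $E$ a finite no-exit graph, is a strongly semihereditary $*$-ring with a positive definite involution.

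First I would recall from part (3) of Example \ref{right_strongly_semihereditary_example} that, since $E$ is a finite no-exit graph and $K$ is positive definite, $L_K(E)$ is (left and right) strongly semihereditary; this is exactly the content of \cite[Proposition 5.1]{Gonzalo_Lia_noetherian} together with the observation that $E$ being no-exit is equivalent to $L_K(E)$ being noetherian. Next, I would invoke part (2) of Example \ref{examples_strongly_semihereditary}: the Leavitt path algebra carries the natural involution $\sum k\,pq^* \mapsto \sum \overline{k}\,qp^*$ induced by the involution on $K$, and by \cite[Proposition 2.4]{Gonzalo_Ranga_Lia} this involution is positive definite precisely because the involution on $K$ is. Thus $L_K(E)$ is a strongly semihereditary $*$-ring with positive definite involution.

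Having checked these hypotheses, the conclusion is immediate: by Theorem \ref{strongly_semihereditary_has_dim}, every strongly semihereditary $*$-ring with a positive definite involution has dimension for $\sima$, so $L_K(E)$ has dimension for $\sima$. I do not expect any real obstacle here — the work has all been done in the preceding sections and in the cited papers; this corollary is just the act of recognizing that noetherian Leavitt path algebras over positive definite fields form a subclass of the rings covered by Theorem \ref{strongly_semihereditary_has_dim}. The only point requiring a word of care is making sure the ``finite no-exit graph'' hypothesis is used correctly: finiteness of $E$ gives hereditariness (hence semihereditariness) of $L_K(E)$, and the no-exit condition upgrades this to noetherianity, which is what \cite[Proposition 5.1]{Gonzalo_Lia_noetherian} needs to conclude $\Qrmax = \Qlmax = \Qrtot = \Qltot$ and hence strong semihereditarity.
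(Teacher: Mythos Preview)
Your proposal is correct and matches the paper's approach exactly: the paper states this corollary immediately after Theorem \ref{strongly_semihereditary_has_dim} with no separate proof, relying on the reader to combine that theorem with the facts (recorded in Examples \ref{right_strongly_semihereditary_example}(3) and \ref{examples_strongly_semihereditary}(2)) that $L_K(E)$ for a finite no-exit graph over a positive definite field is a strongly semihereditary $*$-ring with positive definite involution. Your write-up simply makes this explicit.
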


On first glance, there may seem to be no hope for any reasonable dimension function defined via projections of a $*$-ring that is not Rickart $*$ (as, for example, a noetherian Leavitt path algebra can be by Example \ref{examples_strongly_semihereditary}). This is because idempotents cannot always be replaced by projections, so a finitely generated projective module cannot always be associated to a projection, just to an idempotent. However, Theorem \ref{strongly_semihereditary_has_dim} and Corollary \ref{LPAs} demonstrate that even non Rickart $*$-rings can have a well-behaved dimension. This fact emphasizes the value of Theorem \ref{strongly_semihereditary_has_dim}.

Next, we consider the class of Baer $*$-rings studied in \cite{Lia_Baer}. This class is defined using the nine axioms below.
\begin{itemize}
\item[(A1)] $R$ is finite (i.e. satisfies (Fin) for $\sims$).

\item[(A2)]  For every $0\neq x\in R,$ there exists a self-adjoint
$y$ in the double commutator $\{x^*x\}''$ of $x^*x$ such that $(x^*x)y^2$ is a nonzero projection. This property is called the {\em existence of projections axiom.}

\item[] For every $x\in R$ such that $x=x_1^*x_1+x_2^*x_2+\ldots
+x_n^*x_n$ for some $n$ and some $x_1,x_2,\ldots,x_n\in R$ (such
$x$ is called {\em positive}), there is a unique $y\in\{x^*x\}''$
such that $y^2=x$ and $y$ positive. This property is called the {\em unique positive square root axiom.}

\item[(A3)] Partial isometries are addable (for precise formulation, see \cite[Section 11]{Berberian}).

\item[(A4)] For all $x\in R,$ $1+x^*x$ is
invertible. In this case, we say that $R$ is {\em symmetric}.

\item[(A5)] There is a central element $i\in R$ such that $i^2=-1$
and $i^*=-i.$

\item[(A6)] For each unitary $u\in R$ such that RP$(1-u)=1,$ there exists an
increasingly directed sequence of projections $p_n\in\{u\}''$ with
supremum 1 such that $(1-u)p_n$ is invertible in $p_n Rp_n$ for
every $n.$ This property is called {\em unitary spectral axiom.}

\item[(A7)] If $p_n$ is orthogonal sequence of projections with supremum 1 and
$a_n\in R$ such that $0\leq a_n\leq p_n,$ then there is $a\in R$
such that $a p_n=a_n$ for all $n.$ This property is called {\em positive sum axiom.}

\item[(A8)] The axiom (P) holds for all projections in $M_n(R)$ for every $n.$

\item[(A9)] Every sequence of orthogonal projections in $M_n(R)$
has a supremum.
\end{itemize}

All finite $AW^*$-algebras (thus finite von Neumann algebras as well) satisfy these axioms.

In \cite[Chapter 8]{Berberian}, it is shown that if $R$ is a Baer $*$-ring satisfying (A1)--(A7), then there is a regular Baer $*$-ring $Q$ satisfying (A1) -- (A7) such that $R$ is $*$-isomorphic to a $*$-subring of $Q$, all projections, unitaries and partial isometries of $Q$ are in $R,$ and $Q$ is unique up to $*$-isomorphism. Moreover, the projections, unitaries and partial isometries of $M_n(Q)$ and $M_n(R)$ are the same (\cite[Section 56, Prop. 3]{Berberian}). In \cite[Chapter 9]{Berberian}, it is shown that if $R$ satisfies also (A8) and (A9), then $M_n(R)$ is a finite Baer $*$-ring with (GC) for every $n.$ In \cite{Lia_Baer} it is shown that Baer $*$-rings with (A1)--(A9) have dimension for $\sims.$

In \cite[Theorem 4]{Lia_nine}, it is shown that (A1)--(A7) imply (A9) and that $M_n(R)$ is a Baer $*$-ring for all $n.$ Even without (A9), the rather restrictive axiom (A8) remains. We show that rings with (A1)--(A7) have dimension for $\sima$ and that the matrix ring $M_n(R)$ is finite for every $n.$ Thus, it is not necessary to assume (A8) in \cite[Section 58]{Berberian}. If $R$ is a Baer $*$-ring that satisfies (A8) in addition to (A1)--(A7), then $\sims$ and $\sima$ are the same relation on $M_n(R)$ and, as a consequence, dimension for $\sims$ from \cite{Lia_Baer} coincides with dimension for $\sima.$

\begin{corollary}
Let $R$ be a Baer $*$-ring with (A1)--(A7).
\begin{enumerate}
\item $M_n(R)$ is a finite Baer $*$-ring for every $n$ and $R$ has dimension $\dim^a$ for $\sima.$

\item If $R$ satisfies (A8), then $R$ has dimension $\dim^*$ for $\sims.$ In this case, $\sims\; =\; \sima$ on $M_n(R)$ and $\dim^*=\dim^a.$
\end{enumerate}
\label{VNA-like}
\end{corollary}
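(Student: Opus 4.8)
The plan is to derive part (1) from Theorem \ref{strongly_semihereditary_has_dim} and part (2) from part (1) together with the structure theory of Berberian's regular ring $Q=\Qrmax(R)$ and the uniqueness statements of Theorem \ref{dim_function_exists} and Proposition \ref{uniqueness_of_dim}. For part (1), I first recall that by \cite[Chapter 8]{Berberian} and \cite[Proposition 3]{Lia_Baer} the ring $Q=\Qrmax(R)$ is a regular Baer $*$-ring containing $R$ as a $*$-subring, so the involution of $R$ extends to $\Qrmax$, while $M_n(R)$ is a Baer $*$-ring for every $n$ by \cite[Theorem 4]{Lia_nine}. Hence condition (7) of Proposition \ref{strongly_semihereditary_involutive_rings} holds and $R$ is strongly semihereditary. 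Since each $M_n(R)$ is a Baer $*$-ring, its $*$-transpose involution is proper, which by \cite[Lemma 2.1]{Gonzalo_Ranga_Lia} amounts to the involution of $R$ being $n$-proper; as this holds for every $n$, that involution is positive definite, and Theorem \ref{strongly_semihereditary_has_dim} produces the dimension $\dim^a$ for $\sima$. Finally $M_n(R)$, being an involutive strongly semihereditary ring by Proposition \ref{right_strongly_semihereditary}, is directly finite and finite by Proposition \ref{properties}(2), hence a finite Baer $*$-ring.

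For part (2), assume also (A8). Then for every $n$ the ring $M_n(R)$ is a finite Baer $*$-ring satisfying (GC) for $\sims$ (by \cite[Chapter 9]{Berberian} and \cite{Lia_nine}, (A9) being a consequence of (A1)--(A7)), so $R$ has a dimension $\dim^*$ for $\sims$ by \cite{Lia_Baer}. To see $\sims=\sima$ on $M_n(R)$, I would pass to $M_n(Q)$: by \cite[Section 56, Prop. 3]{Berberian} the projections and partial isometries of $M_n(R)$ and $M_n(Q)$ coincide, so $\sims$ is the same relation on both, and likewise $\sima$ by the monoid isomorphism of Proposition \ref{K0_theorem}; moreover, since all projections of $Q$ lie in $R$, the rings $M_n(R)$ and $M_n(Q)$ have the same projection lattice and, in particular, the same central projections, so they share the Stonian space $X$ of Theorem \ref{dim_function_exists}. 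Now $M_n(Q)$ is a regular, hence $*$-regular, Baer $*$-ring, so (Fin) holds for $\sims$ by \cite[Theorem 3.1]{Ara_Menal}, and (A8) gives the parallelogram law (P) for $\sims$; therefore $M_n(Q)$ satisfies the hypotheses of Theorem \ref{dim_function_exists} for $\sims$, and also for $\sima$ as was shown inside the proof of Theorem \ref{strongly_semihereditary_has_dim}. Since $\sims\subseteq\sima$ always holds, Proposition \ref{uniqueness_of_dim}(3) gives $\sims=\sima$ on $M_n(Q)$, hence on $M_n(R)$, and Proposition \ref{uniqueness_of_dim}(1) gives $d^*=d^a$ on $P(M_n(Q))$.

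It remains to identify $\dim^*$ with $\dim^a$ on $R$. Unwinding the definitions, $\dim^a_R$ is computed through $\dim^a_Q$ and the dimension function $d^a$ on the rings $M_n(Q)$, while $\dim^*_R$ is computed through the dimension function $d^*$ on the rings $M_n(R)$; by the uniqueness clause of Theorem \ref{dim_function_exists}, $d^*$ on $M_n(R)$ and $d^*$ on $M_n(Q)$ agree on $P(M_n(R))=P(M_n(Q))$ because both satisfy (D1)--(D4), and combining with $d^*=d^a$ on $P(M_n(Q))$ yields $\dim^*_R=\dim^a_R$ on finitely generated projective modules; the uniqueness part of Theorem \ref{LueckDimension} then forces $\dim^*_R=\dim^a_R$ on all $R$-modules. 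I expect the main obstacle to lie precisely in this last identification: reconciling the $\sims$-dimension of \cite{Lia_Baer}, which is built on the matrix rings $M_n(R)$, with the $\sima$-dimension built here on the rings $M_n(Q)$, which rests on the identifications $P(M_n(R))=P(M_n(Q))$ and $P(Z(M_n(R)))=P(Z(M_n(Q)))$ together with the strong uniqueness assertions of Theorem \ref{dim_function_exists} and Proposition \ref{uniqueness_of_dim}; a secondary delicate point is verifying that (P) for $\sims$ really does transfer from $M_n(R)$ to $M_n(Q)$ along their common projection lattice.
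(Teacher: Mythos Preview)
Your proposal is correct and follows essentially the same approach as the paper; the only minor variation is that in part (1) you establish strong semihereditarity via condition (7) of Proposition \ref{strongly_semihereditary_involutive_rings}, whereas the paper argues directly that $Q=\Qrmax=\Qrtot$ using \cite[Proposition 3 and Corollary 5]{Lia_Baer}. The concerns you flag at the end (transferring (P) for $\sims$ from $M_n(R)$ to $M_n(Q)$ along the common projection lattice, and reconciling $\dim^*$ built on $M_n(R)$ with $\dim^a$ built on $M_n(Q)$) are not genuine obstacles, and the paper disposes of them exactly as you outline.
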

\begin{proof}
(1) In \cite[Proposition 3]{Lia_Baer}, it is shown that the regular ring $Q$ of a ring $R$ with (A1)--(A7) is both (left and right) maximal and classical ring of quotients of $R$. Since the total ring of quotients is contained in maximal and contains the classical ring of quotients for an Ore ring, $Q$ is also the left and right total ring of quotients. Moreover, $R$ is semihereditary (see \cite[Corollary 5]{Lia_Baer}) and so $R$ is an involutive strongly semihereditary ring. The involution on $R$ is positive definite since $M_n(R)$ is a Rickart $*$-ring (by \cite[Section 56, Theorem 1]{Berberian}) and, thus, its involution is proper. Also, the ring $M_n(R)$ is a Baer $*$-ring since it is Baer and a Rickart $*$-ring. It is finite by Proposition \ref{properties}. The dimension $\dim^a$ for $\sima$ exists by Theorem \ref{strongly_semihereditary_has_dim}.

(2) The regular ring $M_n(Q)$ satisfies assumptions of Theorem \ref{dim_function_exists} for $\sims\,$: (Def) to (OA) always holds for $\sims,$ (P) is assumed to hold for $\sims$ in $M_n(R)$ by (A8) (thus $M_n(Q)$ as well since the projections are the same), so (GC) holds as well. (Fin) also holds in $M_n(Q)$ since it is $*$-regular. Thus, $M_n(Q)$ has a dimension function $d^*$ for $\sims$.

The ring $M_n(Q)$ also satisfies (Def) to (OA), (GC) and (Fin) for $\sima$ so the dimension function $d^a$ exists as well. Since $\sims$ implies $\sima,$ the dimension function $d^*$ is equal to the dimension function $d^a$ and $\sims\, =\, \sima$ on $M_n(Q)$ by Proposition \ref{uniqueness_of_dim}. The dimension $\dim^*_Q$ (that exists by results from \cite{Lia_Baer}) coincides with $\dim^a_Q$ (that exists by Proposition \ref{general_dim_on_Q}).

The equivalences $\sims$ and $\sima$ coincide on $M_n(R)$ also since the projections of $M_n(R)$ and $M_n(Q)$ are the same. The dimension $\dim^*_R$ (that exists by results from \cite{Lia_Baer}) coincides with $\dim^a_R$ (that exists by Theorem \ref{general_dim_on_R}).
\end{proof}

Note that $\sims\; =\; \sima$ on $R$ follows already from (A2) and (A3) (see \cite[Exercise 8A, p. 9]{Berberian}).
However, Corollary \ref{VNA-like} guarantees that this happens on $M_n(R)$ as well if (A1)--(A8) are assumed.

We have seen that the restrictive axioms (A8) and (A9) are not necessary for the main result of \cite[Section 58]{Berberian}: just (A1)--(A7) are sufficient in order to obtain that $M_n(R)$ is a finite Baer $*$-ring. The axiom (A9) can be completely dropped by \cite{Lia_nine}. In \cite[Chapter 9]{Berberian}, (A8) is used in order to obtain (GC) on $M_n(R)$ for every $n.$ We wonder whether (A8) is necessary for this last result and whether it follows from (A1)--(A7).

\end{document}